\newcommand{\C}{\mathcal{C}}
\newcommand{\R}{\mathbb{R}}
\newcommand{\cF}{\mathcal{F}}
\newcommand{\cG}{\mathcal{G}}
\newcommand{\fa}{\mathfrak{a}}
\newcommand{\fg}{\mathfrak{g}}
\newcommand{\fk}{\mathfrak{k}}
\newcommand{\fp}{\mathfrak{p}}
\newcommand{\G}{\Gamma}
\newcommand{\g}{\gamma}
\newcommand{\tv}{\rightarrow}
\DeclareMathOperator{\SL}{SL}
\DeclareMathOperator{\Card}{Card}
\newtheorem{theorem}{Theorem}[section]
\newtheorem{definition}[theorem]{Definition}
\newtheorem{lemma}[theorem]{Lemma}
\newtheorem{proposition}[theorem]{Proposition}
\newtheorem{corollary}[theorem]{Corollary}
\DeclareMathOperator{\Id}{Id}
\renewcommand{\phi}{\varphi}
\renewcommand{\epsilon}{\varepsilon}
\author{Olivier Glorieux\footnote{This project received funding from the European Research Council (ERC) under the European Union's Horizon 2020 research and innovation programme (ERC starting grant DiGGeS, grant agreement No 715982).}, Samuel Tapie\footnote{S. Tapie acknowledges support from A.N.R. grant CCEM (ANR-17-CE40-0034) and from the Centre
Henri Lebesgue ANR-11-LABX- 0020-01}}
\title{Critical exponents of normal subgroups in higher rank}
\begin{document}
\maketitle

\begin{abstract}

We study the critical exponents of discrete subgroups of a higher rank semi-simple real linear Lie group $G$. Let us fix a Cartan subspace $\mathfrak a\subset \mathfrak g$ of the Lie algebra of $G$. We show that if $\Gamma< G$ is a discrete group, and $\Gamma' \triangleleft \Gamma$ is a Zariski dense \emph{normal} subgroup, then the limit cones of $\Gamma$ and $\Gamma'$ in $\mathfrak a$ coincide. Moreover, for all linear form $\phi : \mathfrak a\to \mathbb R$ positive on this limit cone, the critical exponents in the direction of $\phi$ satisfy $\displaystyle \delta_\phi(\Gamma') \geq \frac 1 2 \delta_\phi(\Gamma)$. Eventually, we show that if $\Gamma'\backslash \Gamma$ is amenable, these critical exponents coincide.

\end{abstract}

\section{Introduction}

Let $G$ be a real linear semi-simple Lie group, $K<G$ a maximal compact subgroup and $X = G/K$ the associated Riemmannian symmetric space, whose distance is denoted by $d$. We write $o\in X$ the fixed point of $K$. The sectional curvatures on $X$ are non-positive. When $G$ has rank one, we can scale the metric such that  the sectionnal curvatures on $X$ are at most $-1$. 

Let $\Gamma < G$ be a discrete subgroup. To study the growth of $\Gamma$, a central quantity is its \emph{critical exponent}, defined by
$$
\delta(\Gamma) = \limsup_{R\to +\infty} \frac 1 R \log \{ \gamma \in \Gamma \; ; \; d(o, \gamma o) \leq R\}.
$$

Since the sectional curvatures of $X$ are bounded from below, it follows from the Bishop-Gromov comparison theorem  that this critical exponent is finite, bounded from above by the \emph{volume entropy} of $G$ which is defined by
$$
h(G) = \limsup_{R\to +\infty} \frac 1 R \log {\rm Vol}(B(o,R)).
$$

The critical exponent has been widely studied since the 70's for Riemannian manifolds with curvature at most $-1$ (and CAT$(-1)$ metric spaces). In this case, it coincides with the topological entropy of the geodesic flow of $\Gamma \backslash X$. In this paper, we will be interested in the following elementary question. 

\begin{center}
{\bf If $\Gamma'\triangleleft \Gamma$ is a normal subgroup, how are related $\delta(\Gamma')$ and $\delta(\Gamma)$ ? }
\end{center}

\subsection{Normal subgroups, limit cones and lower bound on critical exponents}

By construction, $\delta(\Gamma') \leq \delta(\Gamma)$ for all subgroups $\Gamma'<\Gamma$. When $\Gamma'$ is a \emph{normal} subgroup of $\Gamma$, the following results have been shown in \cite{roblin2005fatou}.

\begin{theorem}[\cite{roblin2005fatou}, Theorem 2.2.1]\label{th_roblin-delta1>2delta0}
Let $X$ be a CAT(-1) metric space. Let $\Gamma\subset Isom(X)$ be a discrete group and $\Gamma'\triangleleft \Gamma$ be a non elementary \emph{normal} subgroup of $\G_0$. Then their critical exponents satisfy $\displaystyle \delta(\Gamma') \geq \frac{1}{2}\delta(\Gamma)
$.
\end{theorem}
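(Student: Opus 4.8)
The plan is to derive the inequality from a single counting estimate built out of the normality of $\Gamma'$, via conjugation. Since $\Gamma'$ is non-elementary it contains a loxodromic isometry $a$; I would fix one and set $C := d(o, a\cdot o)$. For $\gamma \in \Gamma$ consider $\Phi(\gamma) := \gamma a \gamma^{-1}$. Normality of $\Gamma'$ forces $\Phi(\gamma) \in \Gamma'$ for every $\gamma \in \Gamma$, and two applications of the triangle inequality together with $d(o,\gamma^{-1}o) = d(o,\gamma o)$ give $d(o, \Phi(\gamma)\cdot o) \leq 2\, d(o, \gamma\cdot o) + C$. Thus $\Phi$ maps the ball $B_\Gamma(R) := \{\gamma \in \Gamma : d(o, \gamma o) \leq R\}$ into $B_{\Gamma'}(2R + C) := \{\gamma' \in \Gamma' : d(o, \gamma' o) \leq 2R + C\}$, and it only remains to see that $\Phi$ is not too far from injective on these balls.

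The first real step is to analyse the fibres of $\Phi$. One checks directly that $\Phi(\gamma) = \Phi(\delta)$ exactly when $\gamma^{-1}\delta$ commutes with $a$, so the fibre of $\Phi$ through $\gamma$ is the coset $\gamma\, Z_\Gamma(a)$, where $Z_\Gamma(a)$ is the centraliser of $a$ in $\Gamma$. The main obstacle — and the only place where the geometry of $X$ really enters — is to control the size of this centraliser: since $a$ is loxodromic, every element of $Z_\Gamma(a)$ fixes the pair of endpoints of the axis of $a$ in $\partial X$, so $Z_\Gamma(a)$ is an elementary, hence virtually cyclic, discrete group of isometries of $X$, and therefore has at most linear orbit growth, i.e.\ $\#\big(Z_\Gamma(a)\cap B_\Gamma(T)\big) \leq AT + B$ for suitable constants $A,B$ and all $T\geq 0$. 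Combined with $d(o,\gamma^{-1}o)\leq R$, this shows that each fibre of $\Phi$ meets $B_\Gamma(R)$ in at most $2AR + B$ elements.

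Granting this, the conclusion is immediate: counting the domain against the image of $\Phi$,
\[
\#B_\Gamma(R) \ \leq\ \#\Phi\big(B_\Gamma(R)\big)\cdot \max_{b}\#\big(\Phi^{-1}(b)\cap B_\Gamma(R)\big) \ \leq\ \#B_{\Gamma'}(2R+C)\cdot(2AR+B),
\]
so $\#B_{\Gamma'}(2R+C) \geq \#B_\Gamma(R)/(2AR+B)$. Applying $\limsup_{R\to\infty}\tfrac{1}{2R+C}\log(\cdot)$ to both sides, the subexponential factor $2AR+B$ contributes nothing and $\tfrac{1}{2R+C}\sim\tfrac{1}{2R}$, which yields precisely $\delta(\Gamma')\geq \tfrac12\,\delta(\Gamma)$. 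I would close with three remarks: non-elementarity of $\Gamma'$ is used only to produce the loxodromic element $a$; the CAT$(-1)$ hypothesis (really: negative curvature plus discreteness) is used only through the elementarity of $Z_\Gamma(a)$, so the argument is robust; and the constant $\tfrac12$ is optimal, as infinitely generated free normal subgroups of cocompact surface groups show. Note that the equality of limit sets $\Lambda(\Gamma')=\Lambda(\Gamma)$, although true and a natural companion to the statement, is not needed here.
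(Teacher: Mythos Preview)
Your argument is correct and complete. It is, however, a genuinely different route from the one the paper (following Roblin's original) adopts. The paper's method is measure-theoretic: take a $\Gamma'$-conformal density $\mu$ of dimension $\delta(\Gamma')$ on the boundary, establish a \emph{shadow principle} (Theorem~\ref{th-lem-shadow_principle_normalizer}) extending the Shadow Lemma from the $\Gamma'$-orbit to the full normaliser orbit $\Gamma\cdot o$, deduce via Proposition~\ref{pr-Roblin combinatorial} that the series $\sum_{\gamma\in\Gamma}\|\mu_{\gamma o}\|\,e^{-s\,d(o,\gamma o)}$ has critical exponent at most $\delta(\Gamma')$, and combine with the lower bound $\|\mu_{\gamma o}\|\geq C^{-1}e^{-\delta(\Gamma')d(o,\gamma o)}$ to get $\delta(\Gamma)\leq 2\delta(\Gamma')$. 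Your proof instead uses normality directly through the conjugation map $\gamma\mapsto\gamma a\gamma^{-1}$, and the only geometric input is that the centraliser of a loxodromic in a CAT$(-1)$ space is virtually cyclic.

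Your approach is shorter and avoids Patterson--Sullivan theory entirely, but it is tied to rank one. For the directional exponents $\delta_\phi$ that are the point of this paper, the Cartan projection satisfies $\kappa(\gamma^{-1})=\iota(\kappa(\gamma))$ with $\iota$ the opposition involution, so the step $d(o,\Phi(\gamma)o)\leq 2d(o,\gamma o)+C$, which relies on $d(o,\gamma^{-1}o)=d(o,\gamma o)$, has no analogue for $\phi\circ\kappa$ unless $\phi$ happens to be $\iota$-invariant. The conformal-density argument, by contrast, transfers to higher rank essentially verbatim once the shadow principle is available, and it is also the machinery that drives the amenable case (Theorem~\ref{th-main critical exponent}), where no comparable elementary counting argument is known.
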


This result may be surprising due to the following example. In the setting of Theorem \ref{th_roblin-delta1>2delta0}, let $g,h\in \Gamma$ be hyperbolic elements whose fixed points on the visual boundary $\partial X$ are all distinct. For all $n\in \mathbb N$, let us consider the subgroup $\Gamma_n = \langle g^n, h^n\rangle <\Gamma$ generated by $g^n$ and $h^n$. For $n$ large enough, the group $\Gamma_n$ has a structure of \emph{Schottky group}. It is a free group generated by $g^n$ and $h^n$, and it can easily be shown that $\displaystyle \lim_{n\to +\infty} \delta(\Gamma_n) = 0$. Nevertheless, writing $\Gamma'_n$ for the \emph{normal closure} of $\Gamma_n$ in $\Gamma$, it follows from Theorem \ref{th_roblin-delta1>2delta0} that $\delta(\Gamma'_n)$ cannot be smaller than $\frac 1 2 \delta(\Gamma)$. We will show that a similar phenomenon holds in higher rank symmetric spaces.

\medskip

On non-positively curved manifolds, the study of the critical exponent has been of more recent interest. For discrete groups acting on higher rank symmetric spaces, it has been known since the PhD thesis of J.F. Quint (cf \cite{quint2002mesures}) that it is interesting to consider \emph{directional critical exponents} which we define now. We refer to Section \ref{sec-background} for the geometric background.

\medskip

Let us fix a Cartan subspace $\mathfrak a\subset \mathfrak g$ of the Lie algebra of $G$ and a Weyl chamber $\mathfrak{a^+}\subset \mathfrak a$. Let $C(\Gamma) \subset \overline{\mathfrak a^+}$ be the limit cone of $\Gamma$. This cone has been introduced by Y. Benoist in \cite{benoist1997proprietes}, and describes the directions in the Weyl chamber in which the group grows. By definition, if $\Gamma'<\Gamma$, the limit cones satisfy $C(\Gamma') \subset C(\Gamma)$.

For any linear form $\phi\in \mathcal L(\mathfrak a, \mathbb R)$, which we suppose to be positive on $C(\Gamma)$, let us write
$$
\delta_\varphi(\Gamma) := \limsup_{R\tv \infty} \frac{1}{R} \log \Card \{ \g \in \Gamma\, |\, \varphi (\kappa(\g)) \leq R \},
$$
where $\kappa(\g)$ denotes the Cartan projection of $\g$, see Section \ref{sec-cartan projection}.

Our first result is the following.

\begin{theorem}\label{th-main limit cone}
Let $G$ be a real linear,  semisimple, connected, Lie group with finite center. 
Let $\Gamma$ be a discrete subgroup of $G$ and $\Gamma'\triangleleft \Gamma$ be a  normal subgroup. If $\Gamma'$ is Zariski dense in $G$, then the two limit cones coincide: $C(\Gamma)=C(\Gamma')$.

Moreover, for all linear forms $\varphi\in \mathcal L(\mathfrak a, \mathbb R)$ {which are positive on $C(\G)$}, 
 we have $\displaystyle \delta_\varphi(\G')\geq \frac{1}{2}\delta_\varphi(\G)$.
\end{theorem}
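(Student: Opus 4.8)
The plan is to adapt Roblin's conjugation/counting argument (Theorem \ref{th_roblin-delta1>2delta0}) to the Cartan projection and to directional exponents. First note that $\Gamma$ is itself Zariski dense: its Zariski closure normalizes that of $\Gamma'$, which is $G$, so the closure lies in $N_G(G)=G$ since $G$ is connected semisimple with finite center. Consequently (Benoist) $C(\Gamma)$ and $C(\Gamma')$ are closed convex salient cones with nonempty interior, both stable under the opposition involution $\iota$ of $\mathfrak a$ (because $\kappa(\gamma^{-1})=\iota\kappa(\gamma)$), and $C(\Gamma')\subseteq C(\Gamma)$ is automatic. Write $\lambda(\cdot)$ for the Jordan projection, which is conjugation invariant and satisfies $\lambda(\g^n)=n\lambda(\g)$.

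For the critical exponent bound, fix a loxodromic regular element $\gamma_0\in\Gamma'$ (these exist by Zariski density). The geometric heart is the estimate
\[
\kappa(\gamma\gamma_0\gamma^{-1}) \;=\; \kappa(\gamma)+\iota\kappa(\gamma)+O(1)
\]
for every $\gamma\in\Gamma$ whose Cartan decomposition $\gamma=k_1\exp(\kappa\gamma)k_2$ is in general position (i.e. the conjugate $k_2\gamma_0k_2^{-1}$ is transverse to the flags attached to $\exp(\kappa\gamma)$), the error being uniform on each such family; one proves it by writing $\gamma\gamma_0\gamma^{-1}$ in the Cartan coordinates of $\gamma$ and computing the singular values of $\exp(\kappa\gamma)\,(k_2\gamma_0k_2^{-1})\,\exp(-\kappa\gamma)$ via contracting dynamics on the flag variety. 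The centralizer $Z_\Gamma(\gamma_0)$ lies in a Cartan subgroup, hence is finitely generated abelian, so $\gamma\mapsto\gamma\gamma_0\gamma^{-1}\in\Gamma'$ has subexponential fibres. Taking $\varphi$ of the displayed identity only compares $\delta_\varphi(\Gamma')$ with the exponent of the $\iota$-symmetrization $\tfrac12(\varphi+\varphi\circ\iota)$, which can be strictly smaller; to recover $\delta_\varphi(\Gamma)$ I would instead count over pairs $(\gamma_1,\gamma_2)\in\Gamma^2$ lying in mutually inverse cosets of $\Gamma'$ (so $\gamma_1\gamma_0\gamma_2\in\Gamma'$), in general position, with $\varphi(\kappa\gamma_1),\varphi(\kappa\gamma_2)\le R$: then $\varphi(\kappa(\gamma_1\gamma_0\gamma_2))=\varphi(\kappa\gamma_1)+\varphi(\kappa\gamma_2)+O(1)\le 2R+O(1)$, with no $\iota$ appearing. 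There are $\gtrsim e^{2\delta_\varphi(\Gamma)R}$ such pairs up to a subexponential factor (distributing the mass of $\{\varphi(\kappa\cdot)\le R\}$ among cosets of $\Gamma/\Gamma'$), while each fibre of $(\gamma_1,\gamma_2)\mapsto\gamma_1\gamma_0\gamma_2$ has $\lesssim\#\{\varphi(\kappa\cdot)\le R\}\lesssim e^{\delta_\varphi(\Gamma)R}$ elements; dividing gives $\#\{\eta\in\Gamma':\varphi(\kappa\eta)\le 2R+O(1)\}\gtrsim e^{\delta_\varphi(\Gamma)R-o(R)}$, whence $\delta_\varphi(\Gamma')\ge\tfrac12\delta_\varphi(\Gamma)$. (Equivalently one phrases this with Poincaré series, as Roblin does.)

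For the equality of limit cones, the easy half of the remaining inclusion is the $\iota$-symmetric slice: for $g\in\Gamma$ loxodromic regular, the elements $g^n\gamma_0g^{-n}\in\Gamma'$ (or the commutators $[g^n,\gamma_0]$) have $\kappa(g^n\gamma_0g^{-n})=n(\lambda(g)+\iota\lambda(g))+o(n)$ by the estimate above, so $C(\Gamma')\supseteq C(\Gamma)\cap\mathfrak a^{\iota}$, where $\mathfrak a^{\iota}$ is the fixed subspace of $\iota$. To obtain all of $C(\Gamma)$ I would argue by contradiction: if $C(\Gamma')\subsetneq C(\Gamma)$, then since $C(\Gamma')$ is closed convex with nonempty interior there is a linear form $\varphi$ strictly positive on $C(\Gamma')\setminus\{0\}$ but negative at some $v\in C(\Gamma)$; positivity on the limit cone of $\Gamma'$ gives $\delta_\varphi(\Gamma')<\infty$, whereas an unbounded set of $\gamma\in\Gamma$ has $\varphi(\kappa\gamma)\le 0$ so $\delta_\varphi(\Gamma)=+\infty$, and the pair-counting argument (run with this $\varphi$) still forces $\delta_\varphi(\Gamma')=+\infty$ — a contradiction.

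The two points I expect to require real care are: (i) the "general position / negligibly many bad pairs" step — one must know that the pairs where the additivity estimate degenerates or where its error fails to be uniform form a subexponentially small proportion, which should follow from the non-concentration of the limit sets of $\Gamma$ and $\Gamma'$ in the flag variety on proper subvarieties (or be bypassed by inserting elements from a fixed finite "transversality" subset of $\Gamma'$); and (ii) controlling the fibres of $\Gamma\to\Gamma/\Gamma'$ over $\varphi$-balls when $\Gamma/\Gamma'$ is infinite — the level map can be exponentially distorted (e.g. along parabolic directions), so one must check the pair count is not overly diminished. These two issues, rather than the conjugation estimate itself, are where the main difficulty lies.
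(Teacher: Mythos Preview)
Your approach is genuinely different from the paper's, and the gap you flag as (ii) is fatal rather than cosmetic.

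First a correction of attribution: Roblin's argument in \cite{roblin2005fatou} is \emph{not} a conjugation/counting argument. It is a Patterson--Sullivan argument, and the paper follows the same route in higher rank. One takes a Quint density $\mu\in M_\phi(\delta_\phi(\Gamma'),\Gamma')$ for $\phi\in C(\Gamma')_+^*$, and proves a \emph{shadow principle} (Theorem~\ref{th-lem-shadow_principle_normalizer}): the orbit of the normaliser $N(\Gamma')\cdot o\supset\Gamma\cdot o$ satisfies uniform two-sided shadow bounds for every density in $M_\phi(\delta,\Gamma')$. Both conclusions then fall out with no coset bookkeeping. For the cones: if some $\phi$ were positive on $C(\Gamma')$ but had $\phi(a(o,\gamma_n o))\to-\infty$ along a sequence in $\Gamma$, the shadow lower bound would force $\|\mu_o\|=\infty$. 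For the exponent: a multiplicity estimate (Lemma~\ref{lem-bounded}) shows the series $\sum_{\gamma\in\Gamma}\|\mu_{\gamma o}\|e^{-s\phi(a(o,\gamma o))}$ has critical exponent at most $\delta_\phi(\Gamma')$, and the shadow principle gives $\|\mu_{\gamma o}\|\ge C^{-1}e^{-\delta_\phi(\Gamma')\phi(a(o,\gamma o))}$, whence $\delta_\phi(\Gamma)\le 2\delta_\phi(\Gamma')$.

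The specific gap in your argument is the pair count. With $N_c(R)=\#\{\gamma\in c:\phi(\kappa\gamma)\le R\}$ for $c\in\Gamma/\Gamma'$, your admissible pairs number $\sum_c N_c(R)N_{c^{-1}}(R)$, and you assert this is $\gtrsim e^{2\delta_\phi(\Gamma)R}$ up to a subexponential factor. There is no mechanism for this. When $\phi\ne\phi\circ\iota$, inversion sends the $\phi$-ball to a $(\phi\circ\iota)$-ball, so $N_c$ and $N_{c^{-1}}$ are not comparable; even in the $\iota$-symmetric case Cauchy--Schwarz gives only $\sum_c N_c^2\ge N(R)^2/S(R)$ with $S(R)$ the number of occupied cosets, and in the non-amenable regime (the only one where the half-bound is not already implied by Theorem~\ref{th-main critical exponent}) $S(R)$ grows exponentially. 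In effect the only pairs you can exhibit without coset control are the diagonal ones $(\gamma,\gamma^{-1})$, which is exactly the conjugation estimate you already recognised gives only the symmetrised form $\tfrac12(\phi+\phi\circ\iota)$. The limit-cone contradiction inherits the same problem, since it invokes the pair count for a $\phi$ that is merely positive on $C(\Gamma')$. I do not see a purely combinatorial repair; the measure-theoretic route bypasses the issue because the shadow principle furnishes a lower bound along the whole $\Gamma$-orbit irrespective of how $\Gamma$ distributes over $\Gamma/\Gamma'$.
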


We have separated the statements to emphasize on the equality of limit cones, which seems unknown. Since a linear form which vanishes on some {open subset} of $C(\Gamma)$ has infinite critical exponent, this equality of limit cones follows from the inequality $\delta_\varphi(\G')\geq \frac{1}{2}\delta_\varphi(\G)$ for all $\phi$ which are positive on $C(\Gamma')$.

The Zariski dense assumption on $\G'$ is necessary to avoid trivial counter-examples as $\G'=\G_\ell\times \{\Id\} \subset \G_\ell \times \G_r =\G$ for discrete groups in a product, for which Theorem \ref{th-main limit cone} is trivially false. Our approach is based on the existence of $\phi$-conformal Patterson-Sullivan densities for $\Gamma'$, which are not defined for discrete groups such as $\G_\ell\times \{\Id\} \subset \G_\ell \times \G_r$.

{
Let us say that the group $\G\subset G $ is  \emph{normally irreducible} if it intersects trivially all normal subgroups $N \triangleleft G$ when $N\neq G$. We will show in Proposition \ref{prop-CNS normally irred} that a Zariski dense subgroup $\Gamma < G$ is normally irreducible if and only if all its non-trivial normal subgroups are Zariski dense
\begin{corollary}
Let $G$ be a real linear,  semisimple, connected, Lie group with finite center. 
Let $\Gamma$ be a normally irreducible Zariski dense discrete subgroup of $G$. Then for all non trivial normal subgroup $\Gamma'\triangleleft \Gamma$, the limit cones $C(\Gamma)$ and $C(\Gamma')$ coincide. Moreover, for all linear forms $\varphi\in \mathcal L(\mathfrak a, \mathbb R)$ which are positive on $C(\G)$, we have $\displaystyle \delta_\varphi(\G')\geq \frac{1}{2}\delta_\varphi(\G)$.
\end{corollary}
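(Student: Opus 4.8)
The statement will follow immediately from Theorem~\ref{th-main limit cone} once $\Gamma'$ is known to be Zariski dense in $G$, so the plan is simply to deduce Zariski density of $\Gamma'$ from normal irreducibility of $\Gamma$ and then invoke the theorem. Note that this first step is precisely one implication of Proposition~\ref{prop-CNS normally irred}, so in principle one may just quote it; I record the short argument anyway.

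Let $\Gamma' \triangleleft \Gamma$ be a non-trivial normal subgroup and set $H = \overline{\Gamma'}^{Z}$, its Zariski closure. For each $\gamma \in \Gamma$, conjugation by $\gamma$ is an automorphism of algebraic groups, so $\gamma H \gamma^{-1} = \overline{\gamma \Gamma' \gamma^{-1}}^{Z} = \overline{\Gamma'}^{Z} = H$ by normality of $\Gamma'$ in $\Gamma$. Hence $\Gamma$ normalizes $H$; since the normalizer of a Zariski closed subgroup is Zariski closed and $\Gamma$ is Zariski dense in $G$, it follows that $H$ is normalized by all of $G$, so $N := H \cap G$ is a normal subgroup of $G$ containing $\Gamma'$. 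If $H \neq G$ then $N \neq G$, and normal irreducibility of $\Gamma$ forces $\Gamma \cap N = \{e\}$; but $\Gamma' \subset \Gamma \cap N$, which contradicts $\Gamma' \neq \{e\}$. Therefore $H = G$, i.e. $\Gamma'$ is Zariski dense in $G$.

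With this in hand I would apply Theorem~\ref{th-main limit cone} to the pair $(\Gamma, \Gamma')$, obtaining $C(\Gamma) = C(\Gamma')$ as well as $\delta_\varphi(\Gamma') \geq \frac{1}{2}\delta_\varphi(\Gamma)$ for every $\varphi \in \mathcal L(\mathfrak a, \mathbb R)$ positive on $C(\Gamma)$, which is exactly the claim. I do not expect any real obstacle here: the only mildly delicate point is the bookkeeping between normal subgroups of the Lie group $G$ and of its Zariski closure (relevant when the two do not coincide), but this is precisely what Proposition~\ref{prop-CNS normally irred} settles, and everything of substance has already been done in Theorem~\ref{th-main limit cone}.
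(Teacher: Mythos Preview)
Your proposal is correct and follows exactly the route the paper intends: the corollary is stated immediately after Theorem~\ref{th-main limit cone} and the sentence preceding it points to Proposition~\ref{prop-CNS normally irred} as the bridge, so the paper's implicit proof is precisely ``$\Gamma$ normally irreducible $\Rightarrow$ $\Gamma'$ Zariski dense (Proposition~\ref{prop-CNS normally irred}) $\Rightarrow$ apply Theorem~\ref{th-main limit cone}''. Your reproduction of the Zariski-density argument matches the paper's own proof of the relevant implication of Proposition~\ref{prop-CNS normally irred} essentially line for line.
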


%
%
%

}

\subsection{Equality of critical exponents for co-amenable normal subgroups}

Once the equality of the limit cones has been proven, it is natural to look for conditions which ensure the equality between the critical exponents for $\G$ and $\G'$. When both groups are lattices, the question is settled by a famous theorem of Leuzinger.

\begin{theorem}[Leuzinger, \cite{leuzinger2003kazhdan}]
Let $\G$ be a (possibly non-uniform) lattice in $G$, and let $h(G)$ be the volume entropy of $G$. Then 
$\delta(\G) = h(G)$ if and only if $\G$ is a lattice. 
\end{theorem}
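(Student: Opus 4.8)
I would prove the two implications separately: the direction ``$\Gamma$ a lattice $\Rightarrow\delta(\Gamma)=h(G)$'' is elementary, while the converse is where the standing hypothesis that $G$ has higher rank is used, through Kazhdan's property (T). For the easy direction, recall that $\delta(\Gamma)\le h(G)$ holds for every discrete subgroup: a ball $B(o,r_0)$ of small enough radius meets only finitely many of its $\Gamma$-translates, so $\#\{\gamma:d(o,\gamma o)\le R\}\cdot\mathrm{Vol}\,B(o,r_0)\le C\,\mathrm{Vol}\,B(o,R+r_0)$, whence $\delta(\Gamma)\le h(G)$. For the reverse inequality when $\Gamma$ is a lattice I would use that $\mathrm{Vol}(\Gamma\backslash X)<\infty$: truncating the finitely many cusps of $\Gamma\backslash X$ and covering the truncated part by $\Gamma$-translates of a fixed ball, a volume comparison gives $\#\{\gamma:d(o,\gamma o)\le R\}\gtrsim\mathrm{Vol}\,B(o,R-c)$ up to a term that becomes exponentially negligible as the truncation depth grows, so $\delta(\Gamma)\ge h(G)$. (For uniform lattices this is immediate; for non-uniform ones one only has to check that the cusps do not affect the exponential rate.)

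For the converse I argue the contrapositive: let $\Gamma<G$ be discrete but \emph{not} a lattice, and aim at $\delta(\Gamma)<h(G)$. Consider the quasi-regular unitary representation $\pi$ of $G$ on $L^2(\Gamma\backslash G)$. Since $\mathrm{Vol}(\Gamma\backslash G)=\infty$, the constants are not square-integrable, so $\pi$ has no non-zero $G$-invariant vector. The key input is the quantitative form of property (T) available for semisimple $G$ of real rank at least two (Cowling; Cowling--Haagerup--Howe): there exist $C\ge1$ and $\tau\in(0,1]$, depending only on $G$, such that every unitary representation of $G$ with no non-zero invariant vector satisfies $|\langle\pi(g)v,v\rangle|\le C\,\Xi(g)^{\tau}\,\|v\|^{2}$ for every $K$-invariant $v$, where $\Xi$ is the Harish-Chandra spherical function, obeying $\Xi(\exp H)\asymp e^{-\rho(H)}(1+\|H\|)^{N}$ on $\overline{\mathfrak a^+}$ with $2\|\rho\|=h(G)$. (If $G$ is not simple, $\pi$ may carry vectors invariant under some simple factors; one then runs the argument below using the spherical function of the product of the remaining factors --- nonempty since $\Gamma$ is not a lattice --- and the loss in the exponent is still strictly positive.)

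Now fix a non-negative $K$-invariant $v\in L^2(\Gamma\backslash G)$ supported in a small neighbourhood of the base coset on which it injects, and set $\psi(g):=\langle\pi(g)v,v\rangle\ge0$, a bi-$K$-invariant function on $G$. Writing $B_R=\{g\in G:d(o,go)\le R\}$, unfolding the integral $\int_{B_R}\psi$ shows $\int_{B_R}\psi\asymp\#\{\gamma\in\Gamma:d(o,\gamma o)\le R\}$ up to an $O(1)$ shift in $R$: indeed $\psi(g)\ne0$ forces $g$ to lie within bounded distance of some $\gamma o$, and each such $\gamma$ contributes a fixed positive amount; hence $\limsup_R\frac1R\log\int_{B_R}\psi=\delta(\Gamma)$. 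On the other hand $\int_{B_R}\psi\le C\int_{B_R}\Xi^{\tau}$, and integrating $\Xi(\exp H)^{\tau}\asymp e^{-\tau\rho(H)}(1+\|H\|)^{N\tau}$ against the $KAK$ volume density $\asymp e^{2\rho(H)}dH$ over $\{H\in\overline{\mathfrak a^+}:\|H\|\le R\}$ yields $\limsup_R\frac1R\log\int_{B_R}\Xi^{\tau}=(2-\tau)\|\rho\|=(1-\tfrac{\tau}{2})h(G)$. Combining the two bounds, $\delta(\Gamma)\le(1-\tfrac{\tau}{2})h(G)<h(G)$, which is what had to be shown.

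The main obstacle is the \emph{uniform} quantitative property (T): bare property (T) gives only a single Kazhdan pair, hence a spectral gap for $\Gamma\backslash X$ that is positive but a priori depends on $\Gamma$, whereas the argument needs a fixed exponential decay rate $\tau>0$ valid simultaneously for \emph{all} representations of $G$ without invariant vectors --- equivalently, a decay of spherical functions in $L^2(\Gamma\backslash G)$ uniform over all non-lattices $\Gamma$. This is exactly the point at which $\mathrm{rank}_{\mathbb R}G\ge2$ is indispensable: in rank one no such $\tau$ exists, because, e.g., the commutator subgroup of a cocompact surface group is not a lattice and yet has $\delta=h$, so the statement itself fails there. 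A secondary, purely technical point is the bookkeeping of the cusp contribution in the non-uniform lattice case of the easy direction.
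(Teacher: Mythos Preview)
The paper does \emph{not} prove this theorem: it is quoted from \cite{leuzinger2003kazhdan} as motivation and is given no argument in the text. There is therefore no ``paper's own proof'' to compare your attempt against.

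That said, two remarks are in order. First, the statement as printed is garbled: the hypothesis already assumes $\Gamma$ is a lattice, which makes the biconditional vacuous. The intended statement (and the one you are actually proving) is that for a \emph{discrete subgroup} $\Gamma<G$ of a higher-rank semisimple group, $\delta(\Gamma)=h(G)$ if and only if $\Gamma$ is a lattice.

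Second, your outline is essentially Leuzinger's own: the absence of invariant vectors in $L^2(\Gamma\backslash G)$ when $\Gamma$ is not a lattice, combined with the uniform decay of $K$-finite matrix coefficients coming from quantitative property~(T) (Cowling, Howe--Moore, Oh), yields a strict exponential gap $\delta(\Gamma)\le(1-\tfrac{\tau}{2})h(G)$. Your identification of the key obstacle --- that one needs a decay exponent $\tau>0$ depending only on $G$, not on $\Gamma$, and that this is precisely where $\mathrm{rank}_{\mathbb R}G\ge 2$ enters --- is exactly right, and your rank-one counterexample (the commutator subgroup of a cocompact surface group, which has $\delta=h$ by Theorem~\ref{th_roblin-amenable} since the quotient is abelian, yet has infinite covolume) is the standard one. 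The unfolding step relating $\int_{B_R}\psi$ to the orbital counting function deserves a line or two more care in a formal write-up, but the idea is correct.
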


We are hence interested in the situation when $\Gamma'$ is not a lattice. In negative curvature, the following has been shown by Roblin.

\begin{theorem}[\cite{roblin2005fatou}, Theorem 2.2.2]\label{th_roblin-amenable}
Let $X$ be a CAT(-1) metric space. Let $\Gamma\subset Isom(X)$ be a non-elementary discrete group. Let $\Gamma'\triangleleft \Gamma$ be a \emph{normal} subgroup of $\G$ such that $\Gamma' \backslash \Gamma$ is amenable. Then $\delta(\Gamma') =\delta(\Gamma)$.
\end{theorem}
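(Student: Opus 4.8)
Since $\Gamma'<\Gamma$ one always has $\delta(\Gamma')\le\delta(\Gamma)$, so the entire content is the reverse inequality $\delta(\Gamma')\ge\delta(\Gamma)$. The plan is to argue by contradiction through the Patterson--Sullivan theory of conformal densities on the visual boundary $\partial X$. First I would record that the amenability hypothesis forces $\Gamma'$ to be non-elementary: amenability is stable under extensions, so if $\Gamma'$ were amenable (in particular finite or elementary) then $\Gamma$ itself would be amenable, contradicting that a non-elementary discrete subgroup of $\Isom(X)$ contains a free subgroup. In particular $\Gamma'$ is non-elementary with $\Lambda\Gamma'=\Lambda\Gamma$, and the whole Patterson--Sullivan machinery applies to it. I would then suppose $\delta(\Gamma')<\delta(\Gamma)$ and fix $s$ with $\delta(\Gamma')<s<\delta(\Gamma)$. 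The strategy is to manufacture a genuine $\Gamma$-conformal density of dimension $s$; this is impossible, because for a non-elementary group the minimal dimension of a conformal density equals its critical exponent (Sullivan's shadow-lemma lower bound), so no $\Gamma$-conformal density of dimension $<\delta(\Gamma)$ can exist. That contradiction yields $\delta(\Gamma')\ge\delta(\Gamma)$, hence equality.

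The raw material is a $\Gamma'$-conformal density $(\nu_x)_{x\in X}$ on $\partial X$ of dimension $s$, which exists for every $s\ge\delta(\Gamma')$ by Patterson's construction (using a slowly varying weight to guarantee the limiting density is carried by $\partial X=\Lambda\Gamma'$ rather than escaping to atoms at infinity). The key point is that normality lets $\Gamma$ act on the space $\mathcal C_s$ of dimension-$s$ $\Gamma'$-conformal densities normalized by $\|\nu_o\|=1$: for $\g\in\Gamma$ one sets $(\g\cdot\nu)_x=\g_*\nu_{\g^{-1}x}$, renormalized. A direct computation using $\g^{-1}\Gamma'\g=\Gamma'$ shows $\g\cdot\nu$ is again $\Gamma'$-conformal of dimension $s$, and that $\g'\cdot\nu=\nu$ for every $\g'\in\Gamma'$, so the action factors through the amenable quotient $Q=\Gamma'\backslash\Gamma$. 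A point of $\mathcal C_s$ fixed by $Q$ is exactly a density that is simultaneously $\Gamma'$-conformal and $\Gamma$-equivariant, i.e.\ a $\Gamma$-conformal density of dimension $s$. Since the whole family $(\nu_x)$ is reconstructed from $\nu_o$ via the Busemann cocycle, $\mathcal C_s$ embeds as a nonempty convex weak-$*$ compact subset of the probability measures on the compact space $\partial X$, and I would invoke the fixed-point property of the amenable group $Q$ to produce the sought density.

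The hard part, and the main obstacle, is precisely this fixed-point step, because the natural $\Gamma$-action on conformal densities is only \emph{projectively} linear: the twisted push-forward multiplies total mass by the factor $\int e^{-s\beta_\eta(o,\g o)}\,d(\g_*\nu_o)(\eta)$, so the renormalization restoring $\|\nu_o\|=1$ destroys affineness and Day's theorem does not apply off the shelf. Worse, a merely projective fixed point only yields equivariance up to a homomorphism $c:Q\to\mathbb{R}_{>0}$ (trivial on $\Gamma'$ but possibly nontrivial on the amenable $Q$), i.e.\ a density that is conformal only up to a character twist, which is not enough to run the shadow-lemma contradiction cleanly. I would resolve this either by controlling the normalization cocycle through two-sided shadow-lemma estimates, so that one may average a uniformly bounded family of normalized densities against a $\Gamma$-invariant mean on $Q$ and check that the barycenter is \emph{honestly} equivariant; or by replacing the mean with a Følner sequence $(F_n)$ in $Q$ and extracting a weak-$*$ limit of $\frac{1}{|F_n|}\sum_{q\in F_n}(q\cdot\nu)$, where the Følner condition forces the boundary error terms, and ultimately the residual character $c$, to vanish. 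The triviality of the character is really the crux: I expect it to follow from conservativity/recurrence of the boundary action carried by $\nu$ on $\Lambda\Gamma$, forcing the cocycle to be an exact Busemann cocycle. Once the honest $\Gamma$-conformal density of dimension $s<\delta(\Gamma)$ is in hand, the minimal-dimension theorem closes the argument.
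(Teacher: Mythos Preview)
Your diagnosis of the difficulty is accurate, but your proposed resolution does not work, and the actual proof (Roblin's, reproduced in this paper for the higher-rank analogue in Section~\ref{sec -amenability and critical exponent}) takes a different route that sidesteps the obstacle rather than overcoming it.

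The gap is in the claim that averaging the normalized densities $\mu^u$ against an invariant mean $\Theta$ on $Q=\Gamma'\backslash\Gamma$ produces a \emph{$\Gamma$-equivariant} density. It does not. A direct computation (carried out in the paper around Lemma~\ref{lem:average-density}) gives, for $g\in\Gamma$,
\[
g_*\mu^\Theta_x \;=\; \Theta\!\left(u\mapsto \frac{\|\mu_{uo}\|}{\|\mu_{ugo}\|}\,\mu^u_{gx}\right),
\]
after using right-invariance of $\Theta$. The factor $\|\mu_{uo}\|/\|\mu_{ugo}\|$ genuinely depends on $u$, so this is neither $\mu^\Theta_{gx}$ nor a scalar multiple of it: the averaged family is $\Gamma'$-equivariant but not $\Gamma$-equivariant, not even projectively. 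Your F{\o}lner variant has the same defect, and the ``character $c$'' you hope to kill never materializes as a well-defined homomorphism in the first place. Consequently your contradiction via the minimal-dimension theorem for $\Gamma$-conformal densities never gets off the ground.

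Roblin's actual argument abandons the search for a $\Gamma$-equivariant density entirely. He works with the $\Gamma'$-density $\mu$ of dimension exactly $\delta(\Gamma')$ (not some $s>\delta(\Gamma')$), forms the $\Theta$-average $\mu^\Theta\in M_\phi(\delta(\Gamma'),\Gamma')$, and extracts from right-invariance of $\Theta$ together with Jensen's inequality applied to $t\mapsto t^{-1}$ the single estimate
\[
\|\mu^\Theta_{\gamma o}\|\cdot\|\mu^\Theta_{\gamma^{-1}o}\|\;\ge\;1\qquad\text{for every }\gamma\in\Gamma,
\]
see (\ref{eq:big-mass}). This is much weaker than $\Gamma$-equivariance but is exactly what is needed: setting $\Gamma_+=\{\gamma:\|\mu^\Theta_{\gamma o}\|\ge 1\}$, one has $\Gamma=\Gamma_+\cup\Gamma_+^{-1}$, hence
\[
\sum_{\gamma\in\Gamma}e^{-s\,d(o,\gamma o)}\;\le\;2\sum_{\gamma\in\Gamma_+}\|\mu^\Theta_{\gamma o}\|\,e^{-s\,d(o,\gamma o)}.
\]
The right-hand series is then controlled not by a shadow lemma for $\Gamma$, but by the \emph{shadow principle} for the $\Gamma'$-density $\mu^\Theta$ along the orbit of the normalizer $N(\Gamma')\supset\Gamma$ (Theorem~\ref{th-lem-shadow_principle_normalizer} and Proposition~\ref{pr-Roblin combinatorial}), which shows its critical exponent is at most $\delta(\Gamma')$. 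Hence $\delta(\Gamma)\le\delta(\Gamma')$. The two ingredients you are missing are precisely this Jensen trick and the shadow principle for normalizer orbits; together they replace the unattainable $\Gamma$-equivariance.
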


We will define \emph{amenable groups} in Section \ref{sec -amenability and critical exponent}. Our second main result extends this theorem to discrete groups of higher rank symmetric spaces. 

\begin{theorem}\label{th-main critical exponent}
Let $G$ be a real linear,  semisimple, connected, Lie group with finite center. 
Let $\G$ be a discrete subgroup of $G$. Let $\G'$ be a normal, Zariski dense subgroup of $\G$. If $\G'\backslash \G$ is amenable then for all linear form $\varphi\in \mathcal L(\fa, \R)$ {which are positive on $C(\G)$}, the associated critical exponents satisfy $\displaystyle \delta_\varphi(\G') =\delta_\varphi (\G)$.
\end{theorem}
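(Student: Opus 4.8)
The inequality $\delta_\varphi(\G')\le\delta_\varphi(\G)$ is immediate since $\G'\subset\G$, so the content is the reverse inequality; Theorem \ref{th-main limit cone} already gives $\delta_\varphi(\G')\ge\tfrac12\delta_\varphi(\G)$, and the point is to promote the factor $\tfrac12$ to $1$ using amenability. I would first record that the limit sets in the Furstenberg boundary $\cF=G/P$ coincide: $\Lambda_{\G'}$ is closed, nonempty and invariant under the normalizer of $\G'$ in $G$, hence under $\G$; since $\G'$, and therefore $\G$, is Zariski dense, $\Lambda_\G$ is the unique minimal closed $\G$-invariant subset of $\cF$, so $\Lambda_\G\subset\Lambda_{\G'}\subset\Lambda_\G$, i.e. $\Lambda_{\G'}=\Lambda_\G$.

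Write $\delta':=\delta_\varphi(\G')$, which is finite since $\varphi$ is positive on $C(\G')=C(\G)$. By the directional Patterson--Sullivan construction for $\G'$ there is a $\varphi$-conformal density $\mu'=(\mu'_x)_{x\in X}$ for $\G'$ of dimension $\delta'$, with $\mathrm{supp}\,\mu'_o=\Lambda_{\G'}$. For $\g\in\G$ set $(\g\cdot\mu')_x:=\g_*\mu'_{\g^{-1}x}$. Because $\G'$ is \emph{normal}, $\g^{-1}\G'\g=\G'$, and a direct check (using $G$-equivariance of the $\varphi$-Busemann cocycle) shows that $\g\cdot\mu'$ is again a $\varphi$-conformal density for $\G'$ of the same dimension $\delta'$, with support $\g\Lambda_{\G'}=\Lambda_{\G'}$, and that $\g'\cdot\mu'=\mu'$ for $\g'\in\G'$. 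Thus $\G$ acts, through the quotient $Q:=\G'\backslash\G$, on the convex cone of $\delta'$-dimensional $\varphi$-conformal $\G'$-densities supported on $\Lambda_{\G'}$. This is exactly where normality is used, and where the Zariski density of $\G'$ is needed, so that such densities exist.

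Next I would use amenability of $Q$: fixing a left-invariant mean on $\ell^\infty(Q)$ (equivalently, a F\o lner sequence), average the $\G$-orbit $\{\g\cdot\mu'\}_{\g\in\G}$, normalized by total mass at $o$, to produce a mean $\nu=(\nu_x)_{x\in X}$. Being a limit of convex combinations of $\varphi$-conformal densities of dimension $\delta'$, $\nu$ is again such a density; it is nonzero, with $\mathrm{supp}\,\nu_o\subset\Lambda_{\G'}=\Lambda_\G$; and invariance of the mean makes $\nu$ essentially $\G$-invariant. The one delicate point --- the main obstacle --- is obtaining \emph{exact} $\G$-invariance rather than invariance up to a positive character $\chi:\G\to\R_{>0}$ that is trivial on $\G'$: the normalization introduces the Busemann factors $\g_*\mu'_{\g^{-1}o}(\cF)/\mu'_o(\cF)$, which the F\o lner condition does not immediately neutralize. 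Ruling out a nontrivial $\chi$ is the higher-rank analogue of the mechanism behind Roblin's ``Fatou theorem for conformal densities'': the conformal structure of $\nu$, finiteness of its total mass and minimality of the $\G$-action on $\Lambda_\G$ force $\chi\equiv1$, so that $\nu$ is a genuine $\varphi$-conformal density for $\G$ of dimension $\delta'$, whose support is then a closed $\G$-invariant subset of $\cF$, hence equal to $\Lambda_\G$. Making this rigorous in the present setting requires care with the geometry of $\cF$ and of the $\varphi$-Busemann cocycle, and is the technical heart of the proof.

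Finally I would invoke the shadow lemma in the directional setting: for $\nu$ a $\varphi$-conformal $\G$-density of dimension $\delta'$ with $\mathrm{supp}\,\nu_o=\Lambda_\G$, there is $R>0$ with $\nu_o(\mathcal O_R(\g))\asymp e^{-\delta'\varphi(\kappa(\g))}$ for $\g$ ranging over a cofinal family of $\varphi$-regular elements, where $\mathcal O_R(\g)$ is the associated shadow. Summing the lower bound over such $\g$ with $\varphi(\kappa(\g))\le T$, using bounded multiplicity of these shadows on each level set together with finiteness of $\nu_o(\cF)$, one gets $\sum_{\varphi(\kappa(\g))\le T}e^{-\delta'\varphi(\kappa(\g))}$ bounded polynomially in $T$; hence the $\varphi$-Poincar\'e series of $\G$ converges at $\delta'$, and $\delta_\varphi(\G)\le\delta'$ (using that the critical exponent of $\G$ is carried by any open subcone of its limit cone on which $\varphi$ is positive). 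Combined with $\delta'=\delta_\varphi(\G')\le\delta_\varphi(\G)$, this yields $\delta_\varphi(\G')=\delta_\varphi(\G)$.
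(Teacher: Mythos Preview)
Your proposal has a genuine gap at the step you yourself flag as ``the technical heart'': the averaged density $\nu$ is \emph{not} $\G$-equivariant, and it is not even equivariant up to a character. Concretely, with the paper's notation $\mu^u_x=\|\mu_{uo}\|^{-1}(u^{-1})_*\mu_{ux}$ and $\nu_x=\mu^\Theta_x=\Theta(u\mapsto \mu^u_x)$, right-invariance of $\Theta$ gives
\[
\g_*\mu^\Theta_x \;=\;\Theta\!\left(u\mapsto \frac{1}{\|\mu_{u\g o}\|}\int f(u^{-1}\xi)\,d\mu_{u\g x}(\xi)\right),
\qquad
\mu^\Theta_{\g x}\;=\;\Theta\!\left(u\mapsto \frac{1}{\|\mu_{u o}\|}\int f(u^{-1}\xi)\,d\mu_{u\g x}(\xi)\right),
\]
and the ratio $\|\mu_{uo}\|/\|\mu_{u\g o}\|$ genuinely depends on $u$, so there is no scalar $\chi(\g)$ relating the two. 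Your invocation of ``conformal structure, finiteness and minimality force $\chi\equiv 1$'' is therefore addressing a problem that does not arise in the form you describe, and in any case is not made precise. This is not how Roblin's argument actually proceeds either.

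The paper (following Roblin) sidesteps the whole issue of $\G$-equivariance. One keeps $\mu^\Theta$ as a $\G'$-equivariant $\varphi$-conformal density of dimension $\delta':=\delta_\varphi(\G')$ and exploits two facts. First, since $\G\subset N(\G')$, the orbit $\G\cdot o$ satisfies the \emph{shadow principle} for $M_\varphi(\delta',\G')$ (Theorem~\ref{th-lem-shadow_principle_normalizer}); by Proposition~\ref{pr-Roblin combinatorial} this bounds the critical exponent of the weighted series $\sum_{\g\in\G}\|\mu^\Theta_{\g o}\|\,e^{-s\varphi(a(o,\g o))}$ by $\delta'$. Second, right-invariance of the mean together with Jensen's inequality for $t\mapsto t^{-1}$ gives
\[
\|\mu^\Theta_{\g o}\|\;=\;\Theta\!\left(u\mapsto \frac{\|\mu_{u\g o}\|}{\|\mu_{uo}\|}\right)\;=\;\Theta\!\left(u\mapsto \frac{\|\mu_{uo}\|}{\|\mu_{u\g^{-1}o}\|}\right)\;\ge\;\|\mu^\Theta_{\g^{-1}o}\|^{-1},
\]
so for every $\g\in\G$ at least one of $\g,\g^{-1}$ lies in $\G_+:=\{\g:\|\mu^\Theta_{\g o}\|\ge 1\}$. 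Hence
\[
\sum_{\g\in\G}e^{-s\varphi(a(o,\g o))}\;\le\;2\sum_{\g\in\G_+}e^{-s\varphi(a(o,\g o))}\;\le\;2\sum_{\g\in\G}\|\mu^\Theta_{\g o}\|\,e^{-s\varphi(a(o,\g o))},
\]
whose critical exponent is at most $\delta'$, giving $\delta_\varphi(\G)\le\delta_\varphi(\G')$. The point is that the averaging is used only to manufacture the mass inequality $\|\mu^\Theta_{\g o}\|\cdot\|\mu^\Theta_{\g^{-1}o}\|\ge 1$, not to produce a $\G$-density; the shadow estimate on $\G\cdot o$ comes from the normalizer shadow principle applied to the $\G'$-density, which is exactly the content of Section~\ref{sec-extending the shadow lemma} that your outline does not use.
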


Let us point that, in our proof as in Roblin's, the \emph{normal} assumption is crucial. Removing this assumption is possible in negative curvature, but requires an additional hypothesis on $\Gamma$. This hypothesis also allows to have a full characterization of the equality of critical exponents by amenability, which has been shown by Coulon, Dougall, Schapira and Tapie.

\begin{theorem}[\cite{CDST}, Theorem 1.1]\label{th:CDST}
Let $\G$ Let X be a proper Gromov-hyperbolic geodesic space.  Let $\G$ be a discrete group acting properly by isometries on $X$, and $\G'$ a subgroup of $\G$. Assume that the action of $\G$ is strongly positively recurrent. The following are equivalent.
\begin{itemize}
\item $ \delta(\G)=\delta(\G')$
\item The subgroup $\G'$ is co-amenable in $\G$.
\end{itemize}
\end{theorem}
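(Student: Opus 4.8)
The plan is to recast the scalar statement about $\delta(\G')$ and $\delta(\G)$ as a statement about \emph{twisted critical exponents}, and then to establish a dynamical Kesten-type criterion. Given a unitary representation $\rho\colon\G\to\U(\cH)$ and a fixed unit vector $v\in\cH$, define the twisted Poincaré series $P_\rho(s)=\sum_{\g\in\G}\langle\rho(\g)v,v\rangle\,e^{-s\,d(o,\g o)}$ and let $\delta_\rho$ be its abscissa of convergence. First I would apply this to the quasi-regular representation $\rho=\lambda_{\G/\G'}$ on $\ell^2(\G'\backslash\G)$, taking $v$ to be the indicator of the trivial coset: the matrix coefficient $\langle\rho(\g)v,v\rangle$ equals $1$ if $\g\in\G'$ and $0$ otherwise, so $P_\rho$ is exactly the Poincaré series of $\G'$ and hence $\delta_\rho=\delta(\G')$. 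On the other hand, by the Reiter/Hulanicki characterisation of amenability, $\G'$ is co-amenable in $\G$ if and only if $\lambda_{\G/\G'}$ weakly contains the trivial representation, i.e.\ admits almost invariant vectors. Thus the theorem reduces to the single assertion: \emph{under strong positive recurrence, $\delta_\rho=\delta(\G)$ if and only if $\rho$ weakly contains the trivial representation.}

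Second, I would dispose of the two soft inequalities. The bound $\delta_\rho\le\delta(\G)$ holds unconditionally, since $|\langle\rho(\g)v,v\rangle|\le1$ forces $|P_\rho(s)|\le P_\G(s)$, so $P_\rho$ converges whenever the untwisted series does. For the reverse inequality under weak containment I would use almost invariant unit vectors $v_\epsilon$: the coefficients $\langle\rho(\g)v_\epsilon,v_\epsilon\rangle$ stay close to $1$ on balls of radius $R(\epsilon)\to\infty$, and a Følner-type summation over the corresponding shells recovers nearly the full orbital counting of $\G$, giving $\delta_\rho\ge\delta(\G)-o(1)$ and hence equality. This direction is the analogue of Roblin's co-amenable $\Rightarrow$ equality statement (Theorem \ref{th_roblin-amenable}) and uses no recurrence hypothesis.

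Third, and this is the crux, I would prove the rigidity direction: if $\rho$ does \emph{not} weakly contain the trivial representation, then strong positive recurrence forces a strict drop $\delta_\rho<\delta(\G)$. The tool is a theory of twisted conformal densities: applying Patterson's construction to $P_\rho$ yields an $\cH$-valued, $\rho$-equivariant, $\delta_\rho$-conformal density $\mu^\rho$ on the Gromov boundary $\partial X$, together with a twisted Bowen--Margulis--Sullivan current on the space of geodesics. Strong positive recurrence---equivalently, the fact that the entropy at infinity is strictly smaller than $\delta(\G)$---guarantees that the untwisted BMS measure is finite and exponentially recurrent, so the associated geodesic-flow dynamics has a spectral gap. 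I would then run a Kesten-type dichotomy: comparing $\mu^\rho$ to the untwisted density via the shadow lemma, one shows that a non-zero $\delta(\G)$-conformal twisted density can exist only if the $\rho$-cocycle is asymptotically trivial along conical directions, which is precisely the existence of almost invariant vectors.

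The main obstacle is exactly this quantitative entropy drop. Turning ``$\rho$ has no almost invariant vectors'' into a \emph{definite} exponential gain in the decay of the twisted orbital sums is where strong positive recurrence is indispensable: without it the entropy at infinity could saturate $\delta(\G)$ and the twisting would produce no net cancellation. In a general proper Gromov-hyperbolic space one cannot rely on a symbolic coding of the flow, so the spectral-gap input must be extracted directly from the finiteness and mixing of the BMS measure (via the shadow lemma and a Hopf-type argument) rather than from a Ruelle--Perron--Frobenius transfer operator on a shift. Controlling the twisted densities uniformly and ruling out a conservative-but-non-invariant twisted density at the critical exponent is the technical heart of the argument; once the strict inequality is established, the contrapositive closes the equivalence.
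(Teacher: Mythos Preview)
The paper does not prove Theorem~\ref{th:CDST}. It is quoted from \cite{CDST} as an external result and immediately followed only by a one-line gloss on what ``strongly positively recurrent'' means, with a pointer to \cite{ST19} and \cite{CDST} for details. There is therefore no proof in this paper to compare your proposal against.

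That said, your outline is broadly consonant with the strategy of \cite{CDST} itself (as its title ``Twisted Patterson--Sullivan measures\ldots'' already suggests): reduce to the quasi-regular representation $\lambda_{\G/\G'}$, identify $\delta(\G')$ with a twisted exponent, and prove a Kesten-type dichotomy via vector-valued conformal densities. The easy direction and the soft inequality are stated correctly. Your third step, however, is where all the content lies, and your sketch there is more of a wish list than an argument: ``comparing $\mu^\rho$ to the untwisted density via the shadow lemma'' and ``ruling out a conservative-but-non-invariant twisted density at the critical exponent'' name the difficulties without resolving them. In \cite{CDST} the actual mechanism passes through divergence of the twisted Poincar\'e series at the critical parameter, ergodicity of the twisted Bowen--Margulis--Sullivan current, and a careful analysis showing that a $\delta(\G)$-conformal $\rho$-equivariant density forces $\rho$ to have almost invariant vectors; strong positive recurrence enters to guarantee divergence and finiteness of BMS, not merely ``a spectral gap''. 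If you intend this as a genuine proof you would need to supply those steps; as a summary of the route taken in \cite{CDST} it is accurate in spirit but should not be presented as a self-contained argument.
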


Being \emph{strongly positively recurrent} means that for some suitable notion of \emph{critical exponent at infinity} $\delta^\infty(\Gamma)$, we have $\delta(\Gamma)>\delta^\infty(\Gamma)$. This property is also called a \emph{critical gap}. In negative curvature, this has strong implications on the dynamics of the group $\Gamma$ acting on $X$, and on the geodesic flow of the quotient $X/\Gamma$. We refer the reader to \cite{ST19}, \cite{CDST}. Studying the notion of strongly positively recurrent discrete groups in higher rank symmetric spaces will be done elsewhere.

\subsection{Organisation of the paper}
In the next section, we recall the necessary background on dynamics of discrete groups in symmetric spaces.

In Section \ref{sec-subgroup and critical exponent} we present the proof of Theorem \ref{th-main limit cone}. It mostly relies on adapting the ideas from \cite{roblin2005fatou} to higher rank setting. The main idea is to prove that the orbits of the group $\G$ satisfies a so-called \emph{shadow principle} with respect to the conformal densities associated to $\G'$, see Section \ref{sec-extending the shadow lemma}.

In the last section, we introduce amenable groups and prove Theorem \ref{th-main critical exponent}. It also relies on the shadow principle, combined with the use of amenability of $\G' \backslash\G$ to construct an average of the $\G$-equivariant conformal density, see Section \ref{sec-averaging conformal density}, from which we will control the critical exponents.

\section{Background on higher rank symmetric spaces}\label{sec-background}
In this section we first recall some classical facts on the geometry of higher rank symmetric spaces. We then describe some basic facts about discrete isometry groups of such spaces, and we introduce $\varphi$-conformal densities on the Furstenberg boundary which will be the main tools in the proof of our results. 
Our main references for this section are  \cite{quint2002mesures,thirion,deykapovich}. {Similar results under a slightly different point of view are presented in \cite{Lin04}.}

\subsection{Symmetric spaces}\label{sec-symmetric spaces}
We fix once for all a semisimple real linear Lie group $G$.
In this section we present some geometric properties of the symmetric space associated to $G$ which we will use.  

\subsubsection{Structure of semisimple Lie groups} 
Let $K$ be a maximal compact subgroup of $G$. We denote by $X= G/K$ the associated symmetric riemannian space.

The Lie algebra of $G$ (respectively $K$) is denoted by $\fg$ (respectively $\fk$). Let $\fg = \fk \oplus \fp$ be a Cartan decomposition of $\fg$.

The maximal abelian Lie algebra of $\fp$ is called a Cartan subspace and is denoted by $\fa$. Geometrically, $\fa$ corresponds via the exponential map to an isometric copy of a Euclidean space in $X$ of maximal dimension. Any isometric, totally geodesic copy of $\R^n$  is called a flat of $X$ of dimension $n$. The dimension of $\fa$ is called \emph{the rank} of $G$. By definition, when this rank is at least $2$, there exists flats of dimension greater than $2$ along which the sectional curvatures vanishes.

The space $\fg$ then decomposes into a direct sum $\fg = \oplus_{\alpha\in \Sigma} \fg_\alpha$, where the sum is taken over a set of linear forms on $\fa$, called roots, for which 
$$\fg_\alpha := \{ u \in \fg\, |\, [a,u] =\alpha(a) u, \, \forall a\in \fa\},$$
is non trivial.

The kernels of non zero roots cut the vector space $\fa$ into connected  components, called Weyl chambers. We choose one, which we call the positive Weyl chamber and denote by $\fa^+$.

\subsubsection{Cartan projection}\label{sec-cartan projection}

 The main difference between the hyperbolic setting and  higher rank symmetric spaces is the existence of such flats, for which the sectional curvature is zero.  Directions in flats provide invariants for the dynamics of the group $G$ and hence for the geodesic flow on $X$. We now present how we can take this into account.

The \emph{Cartan projection}, which generalizes the polar decomposition for any symmetric space, is defined as follows.

\begin{theorem}
For every $g\in G$  there exists $k_1, k_2 \in K$ and a unique $\kappa(g) \in \overline{\fa^+}$  such that
$$g = k_1 \exp(\kappa(g) ) k_2.$$
The element $\kappa(g)$ is called the Cartan projection of $g$. 
\end{theorem}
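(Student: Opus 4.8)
The plan is to reduce the group decomposition $g=k_1\exp(\kappa(g))k_2$ to a purely geometric/linear-algebraic statement — that the closed Weyl chamber $\overline{\fa^+}$ is a \emph{strict} fundamental domain for the adjoint action of $K$ on $\fp$ — and then to quote the structure theory of restricted roots for that statement. I would first fix the basepoint $o=eK\in X$, so that $K=\mathrm{Stab}_G(o)$, the Killing form $B$ is positive definite on $\fp$ and induces the metric of $X$, and, since $X$ is complete, simply connected and non-positively curved, the Cartan--Hadamard theorem makes $\exp_o\colon\fp\to X$, $Y\mapsto\exp(Y)\cdot o$, a diffeomorphism. The one identity that drives everything is that $K$ acts on $\fp\cong T_oX$ through $\mathrm{Ad}|_K$ compatibly with $\exp_o$: since $k\exp(Y)k^{-1}=\exp(\mathrm{Ad}(k)Y)$ and $k^{-1}\cdot o=o$, one has $k\cdot\exp_o(Y)=\exp_o(\mathrm{Ad}(k)Y)$ for all $k\in K$, $Y\in\fp$.

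The heart of the matter is the claim that \emph{every $Y\in\fp$ is conjugate under $\mathrm{Ad}(K)$ to a unique element of $\overline{\fa^+}$}. For existence, I would fix a regular vector $H_0\in\fa^+$ and maximise the continuous function $k\mapsto B(\mathrm{Ad}(k)Y,H_0)$ over the compact group $K$; at a maximiser $k_0$, differentiating along $t\mapsto\exp(tZ)k_0$ for $Z\in\fk$ and using the invariance identity $B([Z,W],H_0)=B(Z,[W,H_0])$ yields $B(Z,[\mathrm{Ad}(k_0)Y,H_0])=0$ for all $Z\in\fk$. As $[\mathrm{Ad}(k_0)Y,H_0]\in[\fp,\fp]\subset\fk$ and $B$ is negative definite on $\fk$, this forces $[\mathrm{Ad}(k_0)Y,H_0]=0$; since $H_0$ is regular its centraliser in $\fg$ is $\fz_\fg(\fa)$, hence its centraliser in $\fp$ is $\fz_\fp(\fa)=\fa$ (any $X\in\fp$ commuting with $\fa$ makes $\fa+\R X$ abelian, so lies in $\fa$ by maximality), whence $\mathrm{Ad}(k_0)Y\in\fa$. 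An element $n\in N_K(\fa)$ representing the appropriate element of the Weyl group $W=N_K(\fa)/Z_K(\fa)$ then moves $\mathrm{Ad}(k_0)Y$ into $\overline{\fa^+}$, giving $Y=\mathrm{Ad}(k_1)\kappa$ with $k_1=k_0^{-1}n^{-1}$ and $\kappa=\mathrm{Ad}(nk_0)Y\in\overline{\fa^+}$. For uniqueness, two elements of $\fa$ in the same $\mathrm{Ad}(K)$-orbit are in the same $W$-orbit (using the conjugacy of all maximal abelian subspaces of $\fp$ under $\mathrm{Ad}(K)$), and $\overline{\fa^+}$ meets each $W$-orbit exactly once, so the representative in $\overline{\fa^+}$ is unique.

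Granting the claim, the theorem follows by transporting it through $\exp_o$. Given $g\in G$, let $Y\in\fp$ be the unique vector with $\exp_o(Y)=g\cdot o$, and write $Y=\mathrm{Ad}(k_1)\kappa$ with $k_1\in K$ and $\kappa:=\kappa(g)\in\overline{\fa^+}$. Then $g\cdot o=\exp_o(\mathrm{Ad}(k_1)\kappa)=k_1\cdot\exp_o(\kappa)=k_1\exp(\kappa)\cdot o$, so $\exp(-\kappa)k_1^{-1}g$ fixes $o$ and therefore equals some $k_2\in K$, i.e.\ $g=k_1\exp(\kappa)k_2$. For uniqueness of $\kappa(g)$, if also $g=k_1'\exp(\kappa')k_2'$ with $\kappa'\in\overline{\fa^+}$, then $k_1\cdot\exp_o(\kappa)=g\cdot o=k_1'\cdot\exp_o(\kappa')$, so $\exp_o(\kappa)$ and $\exp_o(\kappa')$ lie in one $K$-orbit of $X$; applying $\exp_o^{-1}$ and the intertwining identity shows $\kappa$ and $\kappa'$ lie in one $\mathrm{Ad}(K)$-orbit of $\fp$, hence $\kappa=\kappa'$ by the uniqueness part of the claim.

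The only genuinely non-trivial ingredient is the claim, and within it the uniqueness half: the existence half is the soft compactness argument above, whereas uniqueness rests on the conjugacy of maximal abelian subspaces of $\fp$ under $\mathrm{Ad}(K)$ together with $\overline{\fa^+}$ being a strict fundamental domain for $W$. These are classical facts about symmetric spaces of non-compact type, provable by further maximisation arguments over $K$ and the description of Weyl chambers as the connected components of the complement of the root hyperplanes in $\fa$, on which $W$ acts simply transitively; I would simply invoke them. Everything else is routine bookkeeping with the diffeomorphism $\exp_o$ and the identity $k\cdot\exp_o(Y)=\exp_o(\mathrm{Ad}(k)Y)$.
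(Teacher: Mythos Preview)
The paper does not prove this statement at all; immediately after the theorem it simply writes ``We refer the reader to \cite{Helgason}, Chap.~VI.\ for a proof.'' Your argument is correct and is precisely the classical proof one finds in Helgason: reduce $G=K\exp(\overline{\fa^+})K$ to the statement that $\overline{\fa^+}$ is a strict fundamental domain for $\mathrm{Ad}(K)\curvearrowright\fp$, prove existence by the maximisation trick over the compact group $K$ (your computation $B([Z,W],H_0)=B(Z,[W,H_0])$ and the conclusion $[\mathrm{Ad}(k_0)Y,H_0]=0$ via negative definiteness of $B|_{\fk}$ are correct), and reduce uniqueness to the two classical facts you name --- that $\mathrm{Ad}(K)$-conjugate elements of $\fa$ are $W$-conjugate, and that $\overline{\fa^+}$ meets each $W$-orbit once. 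Since the paper only cites the result, there is nothing further to compare; your write-up is a faithful sketch of the cited proof.
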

  
We refer the reader to \cite{Helgason}, Chap. VI. for a proof. The Killing form on $G$, which induces the metric on $X= G/K$ and in particular gives a scalar product on $\fp$ which finally restrict to $\fa$. The norm of $\kappa(g)$ for this scalar product is by construction the distance $d(g [K], [K]) $ in $X$. For all $x,y \in X$ let $g_x, g_y\in G$ be such that $x=g_xK$ and $y=g_yK$. Then the distance $d(x,y)$ in $X$ is given by $\|\kappa( {g_y}^{-1} g_x)\| $. 

In order to take into account the direction inside $\fa^+$,  we introduce a  $\fa^+$-valued ``distance'' defined by:
$$a (x,y) := \kappa( {g_y}^{-1} g_x). $$

The following lemma will be used in the sequel as one commonly uses the triangle inequality in negative curvature:
\begin{lemma}\cite[Lemme 4.6]{benoist1997proprietes}\label{lemma-triangle inequality in higher rank}
For all compact subset $L$ of $G$, there exists a positive real number $M>0$ such that for all $\ell_1,\ell_2 \in L$, and for all $x,y\in X$:
$$\| a(\ell_1 x,  \ell_2 y  ) -a(x,y)\| \leq M.$$
\end{lemma}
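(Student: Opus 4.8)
This is (a form of) Lemme~4.6 of \cite{benoist1997proprietes}, and the plan is to reduce it to the following purely group-theoretic property of the Cartan projection, which is its real content: \emph{for every compact $L\subset G$ there is $M=M(L)>0$ such that $\|\kappa(\ell_1 g\ell_2)-\kappa(g)\|\le M$ for all $g\in G$ and all $\ell_1,\ell_2\in L$.} Writing out $a(\ell_1 x,\ell_2 y)$ as the Cartan projection of $g_y^{-1}g_x$ modified by the bounded factors coming from $\ell_1,\ell_2$, the lemma follows from this uniform estimate. To prove the estimate I would test $\kappa$ against a finite family of representations and use nothing more than submultiplicativity of operator norms.

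First I would fix, by the structure theory of semisimple Lie groups, finitely many irreducible representations $\rho_i\colon G\to\GL(V_i)$ whose highest (restricted) weights $\chi_i\in\fa^*$ span $\fa^*$, and equip each $V_i$ with a Euclidean norm that is $K$-invariant and adapted to the weight decomposition (such a norm exists because $K$ is compact). Using the decomposition $g=k_1\exp(\kappa(g))k_2$ with $k_1,k_2\in K$, the fact that $\rho_i(K)$ acts by isometries, and that $\rho_i(\exp H)$ has operator norm $e^{\chi_i(H)}$ for $H\in\overline{\fa^+}$, one obtains the classical identity
\[
\log\|\rho_i(g)\| \;=\; \chi_i(\kappa(g))\qquad\text{for all }g\in G,
\]
$\|\cdot\|$ being the operator norm. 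Since the $\chi_i$ span $\fa^*$ and all norms on $\fa$ are equivalent, it then suffices to bound $\bigl|\chi_i(\kappa(\ell_1 g\ell_2))-\chi_i(\kappa(g))\bigr|$ uniformly over $g\in G$ and $\ell_1,\ell_2\in L$, for each $i$ separately.

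For a fixed $i$, submultiplicativity of the operator norm gives $\|\rho_i(\ell_1 g\ell_2)\|\le\|\rho_i(\ell_1)\|\,\|\rho_i(g)\|\,\|\rho_i(\ell_2)\|$, hence
\[
\chi_i(\kappa(\ell_1 g\ell_2))-\chi_i(\kappa(g)) \;\le\; \log\|\rho_i(\ell_1)\|+\log\|\rho_i(\ell_2)\|.
\]
Applying the same inequality to $g=\ell_1^{-1}(\ell_1 g\ell_2)\ell_2^{-1}$ gives the reverse bound by $\log\|\rho_i(\ell_1^{-1})\|+\log\|\rho_i(\ell_2^{-1})\|$. As $\rho_i$ is continuous and $L$ is compact, both right-hand sides are bounded by a constant $C_i(L)$, so $|\chi_i(\kappa(\ell_1 g\ell_2))-\chi_i(\kappa(g))|\le C_i(L)$. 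Combining the finitely many estimates and translating back through the equivalence of norms on $\fa$ yields the desired constant $M(L)$, and hence the lemma.

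The argument is almost entirely formal, so I do not expect a serious obstacle: the only genuine input is the identity $\log\|\rho_i(g)\|=\chi_i(\kappa(g))$ together with the existence of enough representations for the $\chi_i$ to span $\fa^*$ (this is where semisimplicity of $G$ is used), and everything else is submultiplicativity of $\|\cdot\|$ applied in both directions. The one point to be careful about is choosing the norm on each $V_i$ adapted to the weight decomposition, so that $\|\rho_i(\exp H)\|=e^{\chi_i(H)}$ holds on $\overline{\fa^+}$ exactly rather than only up to a constant. (Alternatively one could argue more geometrically, starting from $\bigl|\,\|\kappa(\ell_1 g\ell_2)\|-\|\kappa(g)\|\,\bigr|\le\sup_{\ell\in L}d(o,\ell o)$ for the norms and adding a convexity/Kostant-type control of the direction of $\kappa$, but the representation argument is cleaner.)
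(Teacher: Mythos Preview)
The paper does not prove this lemma itself; it is cited from Benoist. Your argument for the group-theoretic statement $\|\kappa(\ell_1 g\ell_2)-\kappa(g)\|\le M(L)$ is correct and is essentially Benoist's own proof. However, your reduction step contains a real gap. Writing $x=g_x\cdot o$, $y=g_y\cdot o$, one gets $a(\ell_1 x,\ell_2 y)=\kappa\bigl((\ell_2 g_y)^{-1}(\ell_1 g_x)\bigr)=\kappa\bigl(g_y^{-1}(\ell_2^{-1}\ell_1)g_x\bigr)$, so the bounded factor lands in the \emph{middle}, not on the outside, and submultiplicativity no longer controls it. In fact the lemma \emph{as literally written} is false: in $G=\SL_2(\R)$ take $A=\diag(t,t^{-1})$, $R=\left(\begin{smallmatrix}0&1\\-1&0\end{smallmatrix}\right)\in K$, $x=A\cdot o$, $y=A^{-1}\cdot o$, $\ell_1=R$, $\ell_2=e$; then $R\cdot x=y$, so $a(\ell_1 x,\ell_2 y)=0$ while $\|a(x,y)\|=\|\kappa(A^2)\|=2\log t\to\infty$.

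What Benoist's Lemme~4.6 actually says, and what the paper actually uses (e.g.\ in the proof of Lemma~\ref{lem-bounded}), is the version where the perturbed points are \emph{close} to $x,y$: if $d(x,x')\le R$ and $d(y,y')\le R$ then $\|a(x',y')-a(x,y)\|\le M(R)$. Writing $x'=g_x\ell_1\cdot o$ and $y'=g_y\ell_2\cdot o$ with $\ell_1,\ell_2$ in the compact set $\{g\in G:d(o,g\cdot o)\le R\}$ gives $a(x',y')=\kappa(\ell_2^{-1}\,g_y^{-1}g_x\,\ell_1)$, an \emph{outside} perturbation of $\kappa(g_y^{-1}g_x)$, to which your representation argument applies cleanly. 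So your proof is right for the intended statement; the paper's formulation with the left group action ``$\ell_i x$'' is a misstatement.
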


\subsubsection{Furstenberg boundary}

When the rank of $X$  is at least $2$, the isometry group of $X$ does not act transitively on the unitary tangent bundle. This difference with strictly negative curvature spaces makes the use of the geometric boundary (ie. equivalence classes of asymptotic rays)  less relevant. One will instead consider classes of asymptotic Weyl chambers, which we introduce now. 

A \emph{geometric Weyl chamber} is by definition a map $ w : \fa^+ \tv X$ of the form 
$$w(a) = g\cdot \exp( a), $$
where $g$ is an element of $G$. 
Two geometric Weyl chambers  $w_1, w_2$ are said \emph{asymptotic} if 
$$\sup_{a \in \fa} d(w_1(a), w_2(a) ) <+\infty.$$
We denote by $\cF$ the space of classes of asymptotic geometric Weyl chambers. 
By definition, the group $G$ acts transitively on $\cF$. The stabilizer of the classes corresponding to $g=\Id$ is denoted by $P$. One has therefore the identification $\cF \simeq G/P$. The space $\cF$ is called the \emph{Furstenberg boundary} of $X$. 

In higher rank symmetric spaces, geometric Weyl chambers play a role analogous to geodesic rays in rank one. For a geometric Weyl chamber $w$, call $w(0)$ its origin. For any class of asymptotic geometric Weyl chambers, we can find a representative with origin $o=[K]\in X$. Writing $M$ for the stabilizer in $K$ of an element of $\cF$, it follows from the Iwasawa decomposition (cf \cite{Helgason}, Chap. VI.) that
$$\cF = G/P \simeq K/M.$$
In particular, the Furstenberg boundary $\mathcal F$ is compact.

Let $w$ be a geometric Weyl chamber and denote by $w(+\infty)$ its equivalence class. We extend as follows the notion of shadow to this setting:
\begin{definition}\label{def-shadow}
For all $x,y\in X$ and all $r>0$, the  shadow of $B(y,r)$ seen from $x$ is defined by 
$$S(x,y,r) = \{ \eta \in \cF \, |\,  \exists w : \fa^+ \tv X, w(+\infty) = \eta,\,  w(0)=x, \,  w(\fa^+) \cap B(y,r) \neq \emptyset \}.$$
\end{definition}
The definition is similar as the strictly negatively curved case, where one just replaces geodesic rays by geometric Weyl chambers.

Let $\xi \in \cF$ and $G_\xi$ the stabilizer of $\xi$ in $G$. There is a unique open dense orbit of $G_\xi$ in $\cF$. It is denoted by $V(\xi)$. These are the points which are \emph{visible} from $\xi$. It is equal to the set of points $\eta\in \cF$ such that there exists a flat $ f : \fa \tv X$, with $f(u)=g. \exp(u).K $ for some $g\in G$, that satisfies : $[f_{|\fa^+}]=\eta$ and $[f_{|-\fa^+}]=\xi$. The complementary set of $V(\xi)$ in $\cF$ is denoted by $L(\xi)$. It follows from Section 5 of \cite{quint2002mesures}\footnote{In this reference, $L(\xi)$ is denoted by $\mathcal Q_\theta^-$} that $L(\xi)$ is a Zariski closed subset of $\mathcal F$.

We will need the following result, showing that $\tau \mapsto L(\tau)$ is continuous in the Hausdorff topology. Recall that $\cF$ identified with $K/M$ and we endow $\cF$ with a $K$-invariant Riemannian metric.

  \begin{lemma}\cite[Lemma 6.6]{deykapovich}\label{lem-DK6.6}
For every $\epsilon>0$, there exists $\delta>0$ such that for $\tau \in \cF$ and for all $\tau'\in B(\tau, \delta)$ :
$$B(L(\tau'), \epsilon) \subset B(L(\tau), 2\epsilon).$$
\end{lemma}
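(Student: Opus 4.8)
The plan is to deduce the lemma from the $G$-equivariance of the assignment $\xi \mapsto L(\xi)$, which exhibits $\{L(\xi)\}_{\xi \in \cF}$ as a continuously varying family of translates of one fixed compact subset of $\cF$; the statement then reduces to the uniform continuity of this family in the Hausdorff metric.

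First I would record the equivariance $L(g\xi) = g\,L(\xi)$ for all $g \in G$ and $\xi \in \cF$. Since $G_{g\xi} = g\,G_\xi\,g^{-1}$, the set $g\,V(\xi)$ is an open dense orbit of $G_{g\xi}$ on $\cF$, so by uniqueness of such an orbit $V(g\xi) = g\,V(\xi)$, and passing to complements gives the claim; in particular $L$ is $K$-equivariant. Next, because $K$ acts transitively on $\cF = K/M$ and $K$ is compact, for every $\delta' > 0$ there is $\delta > 0$ such that any $\tau, \tau' \in \cF$ with $d(\tau, \tau') < \delta$ can be written $\tau' = k \tau$ with $k \in K$ and $d_K(e, k) < \delta'$ (e.g. endowing $\cF$ with the metric induced by a bi-invariant metric $d_K$ on $K$ one may take $d_K(e,k) = d(\tau,\tau')$; or use a local continuous section of $K \to K/M$). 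Since $K$ acts on $\cF$ by isometries and $(k,x) \mapsto kx$ is continuous, the modulus $\omega(k) := \sup_{x \in \cF} d(x, kx)$ tends to $0$ as $k \to e$ (continuity of $(k,x) \mapsto d(x,kx)$ together with compactness of $\cF$), and for any closed $A \subset \cF$ one has $d_{\mathrm{Haus}}(A, kA) \le \omega(k)$. Putting these together: given $\epsilon > 0$, choose $\delta'$ so that $\omega(k) < \epsilon$ whenever $d_K(e,k) < \delta'$, and then the corresponding $\delta$. For $\tau' \in B(\tau, \delta)$ write $\tau' = k\tau$ with $d_K(e,k) < \delta'$; equivariance gives $L(\tau') = k\,L(\tau) \subset B(L(\tau), \epsilon)$, whence $B(L(\tau'), \epsilon) \subset B(L(\tau), 2\epsilon)$, which is the conclusion (the slack between $\epsilon$ and $2\epsilon$ is more than we need).

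I expect no serious obstacle here: the argument is soft, the only bookkeeping being (i) extracting a small displacement $k \in K$ from closeness in $\cF$, handled by compactness of $K$ or a local section, and (ii) the uniform bound $d_{\mathrm{Haus}}(A, kA) \le \omega(k)$, which holds for all closed $A$ precisely because $K$ acts by isometries. One can alternatively observe that $\{(\xi,\eta) \in \cF \times \cF : \eta \in L(\xi)\}$ is Zariski closed (it is the locus where $\eta$ fails to lie in the big cell of $G_\xi$, compare Section 5 of \cite{quint2002mesures}), hence closed, which already yields upper semicontinuity of $\xi \mapsto L(\xi)$; but upgrading this to the uniform-in-$\tau$ statement of the lemma still seems to need the homogeneity, so I would build the proof on the identity $L(g\xi) = g\,L(\xi)$ from the outset.
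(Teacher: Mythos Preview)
The paper does not prove this lemma; it is quoted verbatim from \cite[Lemma 6.6]{deykapovich} and used as a black box (see Section~\ref{sec-background}). So there is no in-paper proof to compare against.

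That said, your argument is correct and self-contained. The key points---the $G$-equivariance $L(g\xi)=gL(\xi)$, the possibility of writing nearby points of $\cF=K/M$ as $\tau'=k\tau$ with $k$ close to the identity (via a bi-invariant metric on $K$ and compactness of $M$, or a local section), and the uniform displacement bound $d_{\mathrm{Haus}}(A,kA)\le\omega(k)$ valid because $K$ acts by isometries---are all sound, and the final inclusion $B(L(\tau'),\epsilon)\subset B(L(\tau),2\epsilon)$ follows by the triangle inequality. Your remark that mere upper semicontinuity of $\xi\mapsto L(\xi)$ (from Zariski closedness of the incidence variety) is not quite enough, and that homogeneity is what gives the uniformity in $\tau$, is exactly the right diagnosis. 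The proof in \cite{deykapovich} proceeds along essentially the same lines, exploiting the transitive $K$-action on $\cF$; your write-up would serve as a perfectly good substitute for the citation.
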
 
  Where we denoted by $B(X,\epsilon)$ the $\epsilon$ neighborhood of a subset $X\in \cF$.

\subsubsection{Busemann functions}
The notion of Busemann function extends accordingly to the $\fa^+$-valued distance function $a(\cdot, \cdot)$ defined in section \ref{sec-cartan projection}. Let $x, y \in X$, and $\eta \in \cF$ and take $\xi : \R \tv X$ a geodesic ray at bounded distance of $\eta$.  We define the $\fa$-valued Busemann function by: 
$$
\beta_\eta (x,y) := \lim_{t\tv +\infty} a(x,\xi(t)) - a(y, \xi(t)).
$$
The fact that this limit exists and is independent of the choice of $\xi$ can be founded in Section 6 of \cite{quint2002mesures}, see also \cite[5.34]{benoistquint}.

We recall two geometric inequalities concerning the Busemann functions. We refer to \cite{thirion} which shows precisely the statements which we use in the sequel. Aanalogous statement are shown for instance in Section 6 of \cite{quint2002mesures}.


\begin{lemma}\cite[Proposition 8.66]{thirion}\label{lemma-thirion 1}
There exists a constant $c >0$ such that for all $x, y \in X$ and all $\xi \in S(x,y, r)$ : 
$$\| \beta_\xi (x,y) - a(x,y)\| \leq c r. $$
\end{lemma}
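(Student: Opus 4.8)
\medskip
\noindent\emph{Proof sketch.}
The plan is to run, in the $\fa$-valued setting, the classical rank-one argument: geometric Weyl chambers replace geodesic rays, the $\fa$-valued distance $a(\cdot,\cdot)$ replaces $d$, and the $\fa$-valued Busemann cocycle replaces the scalar Busemann function. First I would unwind Definition~\ref{def-shadow}: if $\xi\in S(x,y,r)$ there are a geometric Weyl chamber $w:\fa^+\tv X$ with $w(0)=x$ and $w(+\infty)=\xi$, and a vector $a_0\in\fa^+$ with $z:=w(a_0)\in B(y,r)$, so $d(z,y)\le r$. Using the cocycle relation for the $\fa$-valued Busemann function (Section~6 of \cite{quint2002mesures}, see also \cite[5.34]{benoistquint})
\[
\beta_\xi(x,y)=\beta_\xi(x,z)+\beta_\xi(z,y),
\]
it then suffices to establish three facts: (a) $\beta_\xi(x,z)=a(x,z)$; (b) $\|\beta_\xi(z,y)\|\le d(z,y)$; (c) $\|a(x,z)-a(x,y)\|\le d(z,y)$.

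For (a) I would exploit that $z$ sits on the chamber $w$ pointing at $\xi$. Fix a vector $v_0$ in the interior of $\overline{\fa^+}$, so that $a_0+tv_0\in\overline{\fa^+}$ for all $t\ge0$, and consider the geodesic ray $\sigma(t):=w(a_0+tv_0)$. It lies inside the maximal flat carrying $w$ and runs in a regular direction; hence it stays at bounded distance of the Weyl chamber at infinity $\xi$ and may legitimately be used to compute Busemann functions based at $\xi$. Writing $w(a)=g\exp(a)\cdot o$ and using that $\kappa(\exp(b))$ is the dominant Weyl-chamber representative of $b\in\fa$, one checks the additivity along a chamber
\[
a(x,\sigma(t))=a(x,z)+a(z,\sigma(t))\qquad(t\ge0),
\]
where the opposition involution $b\mapsto-w_0b$ appears identically on both sides and cancels. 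Subtracting $a(z,\sigma(t))$ and letting $t\to+\infty$ yields $\beta_\xi(x,z)=a(x,z)$.

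Facts (b) and (c) are instances of the (coarse) Lipschitz property of the Cartan projection, $\|\kappa(hg)-\kappa(g)\|\le\|\kappa(h)\|$ for all $g,h\in G$ (equivalently $a(\cdot,\cdot)$ is $1$-Lipschitz in each variable); this is a standard fact, and for $r$ in a bounded range it can also be extracted from Lemma~\ref{lemma-triangle inequality in higher rank}. Concretely, (c) is immediate, and for (b) one bounds $\|a(z,\sigma(t))-a(y,\sigma(t))\|\le d(z,y)$ for every $t$ and passes to the limit. Combining the three facts,
\[
\beta_\xi(x,y)-a(x,y)=\bigl(a(x,z)-a(x,y)\bigr)+\beta_\xi(z,y),
\]
so $\|\beta_\xi(x,y)-a(x,y)\|\le 2d(z,y)\le 2r$, and the lemma holds with $c=2$.

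I do not expect a genuine obstacle here: the content is essentially a transcription of the rank-one estimate. The two steps needing care are, first, the verification that a ray contained in a Weyl chamber is an admissible base ray in the definition of the Busemann function — this is where the regularity of $v_0$ and the flat structure enter, and I would check it against the construction of $\beta$ in \cite{quint2002mesures} — and second, the additivity identity of Step~(a), where one must correctly propagate the opposition involution through $\kappa$. Everything else reduces to the classical computation.
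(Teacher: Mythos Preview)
The paper does not supply its own proof of this lemma: it is quoted verbatim from \cite[Proposition~8.66]{thirion}, with a reference to the analogous statements in Section~6 of \cite{quint2002mesures}. There is therefore no in-paper argument to compare against.

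Your sketch is the standard direct argument and is sound. The one step needing care is~(a), and you have correctly isolated it: with $x=w(0)$, $z=w(a_0)$, $\sigma(t)=w(a_0+tv_0)$ all lying in a single flat, the three Cartan projections are $\iota(a_0+tv_0)$, $\iota(a_0)$, $\iota(tv_0)$ for the linear opposition involution $\iota$, so additivity holds and $\beta_\xi(x,z)=a(x,z)$ follows. Steps~(b) and~(c) are the $1$-Lipschitz property of the Cartan projection in each variable, which is classical (and incidentally your~(b) gives Lemma~\ref{lemma-thirion 2} with constant $c=1$). The resulting constant $c=2$ is what this decomposition naturally produces. The only point to verify against the precise definition in \cite{quint2002mesures} is that a regular ray inside the chamber $w$ is admissible for computing $\beta_\xi$; you have flagged this and it does hold.
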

  
\begin{lemma}\cite[Proposition 8.69]{thirion}\label{lemma-thirion 2}
There exists a constant $c >0$ such that for all $x, y \in X$ and all $\xi \in \cF$:
$$\| \beta_\xi (x,y) \|\leq c \|a(x,y)\|.$$
\end{lemma}

\subsection{Dynamic of discrete groups}

We now present general properties of the action of a discrete subgroup $\G<G$ on $X$. The first paragraph presents the notions of limit cone of $\G$ in the Weyl chamber $\overline{\fa^+}$ and limit set of $\Gamma$ in the Furstenberg boundary $\mathcal F$. We detail in the second paragraph different notions of growths and critical exponents. We  conclude  this section by presenting conformal densities in higher rank, with a perspective towards the so-called shadow lemma.  \\

We fix  for the whole section a semisimple, real linear, connected Lie group $G$ with finite center, a choice of Cartan subspace $\fa$ and Weyl chamber $\fa^+$, and a discrete  Zariski dense subgroup $\G\subset G$.

 \subsubsection{Limit set and limit cone} 

The study of how the orbits of the discrete group $\G$ ``accumulate at infinity'' can be splitted in two parts: its accumulation directions the Cartan subspace of $\fp$, which form its \emph{limit cone}, and the accumulation points of the geometric Weyl chamber on the Furstenberg boundary which form its \emph{limits set}.

\begin{definition}
Let $\xi\in \cF$ be fixed. We call \emph{limit set} of $\Gamma$ the set $\Lambda_\Gamma$ of accumulation points of $\Gamma\cdot \xi\in \cF$.

\end{definition}

The following result is a crucial chararcterization of the limit set.

\begin{theorem}\cite{benoist1997proprietes}\label{th-limit set is zariski dense}
If $\Gamma$ is Zariski dense, the limit set $\Lambda_\G$ is a Zariski dense subset of $\cF=G/P$. It is the unique minimal subset of $\cF$ for the action of $\G$. In particular is is independent of the orbit base point.
\end{theorem}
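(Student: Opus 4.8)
The plan is to reconstruct Benoist's argument \cite{benoist1997proprietes}, whose two key inputs are: (i) a Zariski dense $\Gamma$ contains a \emph{loxodromic} element, i.e.\ an element $g$ with Jordan projection in the open Weyl chamber, equivalently an element admitting a unique attracting fixed point $x_g^+\in\cF$ and a unique repelling fixed point $x_g^-\in\cF$ that are opposite (so $x_g^+\in V(x_g^-)$); and (ii) such a $g$ has North--South dynamics on $\cF$: $g^n\eta\to x_g^+$, uniformly on compact subsets of $V(x_g^-)$. I set $\Lambda:=\overline{\{x_g^+:\,g\in\Gamma\ \text{loxodromic}\}}$. First I record some soft facts. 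Since $G$ is connected, any finite orbit of $\Gamma$ on $\cF$ would put a finite-index, hence still Zariski dense, subgroup of $\Gamma$ inside a conjugate of the proper algebraic subgroup $P$, which is impossible; so every orbit $\Gamma\cdot\xi$ is infinite, and by compactness of $\cF$ the accumulation set $\Lambda_\Gamma$ is non-empty. It is closed (the set of accumulation points of any subset is closed) and $\Gamma$-invariant (if $\gamma_n\xi\to\eta$ with the $\gamma_n$ pairwise distinct then $g\gamma_n\xi\to g\eta$ with the $g\gamma_n$ pairwise distinct).

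\emph{Zariski density.} Let $Z:=\overline{\Lambda_\Gamma}^{\,\mathrm{Zar}}\subset\cF$. Each $\gamma\in\Gamma$ acts on $\cF=G/P$ by an algebraic automorphism, so $\gamma Z=\overline{\gamma\Lambda_\Gamma}^{\,\mathrm{Zar}}=\overline{\Lambda_\Gamma}^{\,\mathrm{Zar}}=Z$; hence $\Gamma$ is contained in the stabilizer $\{g\in G:gZ=Z\}$. The latter is a Zariski closed subgroup: it equals $\bigl(\bigcap_{z\in Z}\{g:gz\in Z\}\bigr)\cap\bigl(\bigcap_{z\in Z}\{g:g^{-1}z\in Z\}\bigr)$, an intersection of preimages of the Zariski closed set $Z$ under algebraic maps. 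Containing the Zariski dense $\Gamma$, it is all of $G$; thus $Z$ is $G$-invariant, and since $G$ is transitive on $\cF$ and $Z\neq\emptyset$, this forces $Z=\cF$.

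\emph{Minimality.} I claim every non-empty closed $\Gamma$-invariant $F\subset\cF$ contains $\Lambda$. Fix a loxodromic $g\in\Gamma$ and any $\eta_0\in F$. As $L(x_g^-)$ is Zariski closed with non-empty complement $V(x_g^-)$, the set $\{h\in G:h\eta_0\in L(x_g^-)\}$ is the preimage of $L(x_g^-)$ under the algebraic map $h\mapsto h\eta_0$, hence a proper Zariski closed subset of $G$; by Zariski density there is $h\in\Gamma$ with $h\eta_0\in V(x_g^-)$. Then $h\eta_0\in F$ by invariance, and $g^n(h\eta_0)\to x_g^+$ forces $x_g^+\in F$ since $F$ is closed. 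Letting $g$ range over the loxodromic elements of $\Gamma$ and passing to the closure, $\Lambda\subseteq F$. Now $\Lambda$ is itself non-empty (by (i)), closed, and $\Gamma$-invariant (for $h\in\Gamma$, $hgh^{-1}$ is loxodromic with $x_{hgh^{-1}}^+=hx_g^+$); applying the claim to a closed $\Gamma$-invariant $F\subseteq\Lambda$ gives $F=\Lambda$, so $\Lambda$ is minimal, and being contained in every non-empty closed $\Gamma$-invariant set it is the unique such minimal set. Taking $F=\Lambda_\Gamma$ gives $\Lambda\subseteq\Lambda_\Gamma$.

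\emph{Identification with $\Lambda_\Gamma$, and the hard point.} It remains to prove $\Lambda_\Gamma\subseteq\Lambda$; combined with the above this yields $\Lambda_\Gamma=\Lambda$ for \emph{every} base point $\xi$, hence $\Lambda_\Gamma$ is independent of $\xi$, equals the unique minimal set, and is Zariski dense. Let $\eta=\lim_n\gamma_n\xi$ with the $\gamma_n\in\Gamma$ pairwise distinct; write $\gamma_n=k_n\exp(\kappa(\gamma_n))\ell_n$ in Cartan form and pass to a subsequence with $k_n\to k$, $\ell_n\to\ell$ in $K$ and $\|\kappa(\gamma_n)\|\to\infty$. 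If the directions $\kappa(\gamma_n)/\|\kappa(\gamma_n)\|$ converge into the open chamber, then $\exp(\kappa(\gamma_n))$ contracts the big cell $V(o_-)$ of a fixed point $o_-\in\cF$ onto its opposite point $o_+$, whence $\gamma_n\zeta\to k\cdot o_+$ for all $\zeta$ with $\ell\zeta\in V(o_-)$, so $\eta=k\cdot o_+$; using the orbit identity $\Gamma(h\xi)=\Gamma\xi$ and Zariski density to place $\xi$ in the relevant cell, and likewise to pick $h\in\Gamma$ with $\ell\,hx_g^+\in V(o_-)$ for a fixed loxodromic $g$, one gets $\eta=\lim_n\gamma_n(hx_g^+)$ with each $\gamma_n(hx_g^+)$ the attracting fixed point of the loxodromic $\gamma_n(hgh^{-1})\gamma_n^{-1}$, hence $\eta\in\Lambda$. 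The main obstacle lies here, together with input (i): one must also treat sequences whose Cartan directions escape to a wall of $\fa^+$, where the induced dynamics on $\cF$ contracts only onto a proper sub-flag variety, and --- more fundamentally --- produce loxodromic elements of $\Gamma$ with attracting fixed points distributed densely enough. Both are the technical core of \cite{benoist1997proprietes}, obtained by passing to well-chosen proximal linear representations of $G$.
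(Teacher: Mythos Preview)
The paper does not prove this theorem: it is quoted from \cite{benoist1997proprietes} and used as a black box, so there is no in-paper argument to compare against. What follows is an assessment of your reconstruction on its own terms.

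Your Zariski density argument is clean and correct: the stabiliser of $\overline{\Lambda_\Gamma}^{\mathrm{Zar}}$ in $G$ is algebraic, contains the Zariski dense $\Gamma$, hence equals $G$, forcing $\overline{\Lambda_\Gamma}^{\mathrm{Zar}}=\cF$. Your proof that $\Lambda:=\overline{\{x_g^+\}}$ is the unique minimal closed invariant set, via North--South dynamics of loxodromic elements and a Zariski-density trick to push any point into the attracting cell, is also correct and is essentially Benoist's argument. The inclusion $\Lambda\subseteq\Lambda_\Gamma$ then follows.

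The reverse inclusion $\Lambda_\Gamma\subseteq\Lambda$ is where the real content lies, and you rightly flag it. But your handling of the ``regular'' case has a gap beyond the one you acknowledge. When $\ell\xi\notin V(o_-)$ you write that one can ``place $\xi$ in the relevant cell'' via $\Gamma(h\xi)=\Gamma\xi$; this does not work, because replacing $\xi$ by $h\xi$ changes the sequence from $\gamma_n\xi$ to $\gamma_n h\xi$, which need not converge to the same $\eta$. What your contraction argument actually gives is that $k\cdot o_+\in\Lambda$ (pick any $\zeta_0\in\Lambda\cap\ell^{-1}V(o_-)$, possible since $\Lambda$ is Zariski dense, and use $\gamma_n\zeta_0\to k\cdot o_+$ with $\gamma_n\zeta_0\in\Lambda$); but identifying $\eta$ with $k\cdot o_+$ still requires $\ell\xi\in V(o_-)$, which can fail for the given $\xi$ and the given subsequential $\ell$. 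So even before the wall case, the argument as written does not close for an \emph{arbitrary} base point $\xi$.

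You are correct that resolving both this issue and the singular-direction case is exactly the technical core of \cite{benoist1997proprietes}, carried out there through proximal linear representations; your sketch is an honest outline that locates, rather than fills, those gaps.
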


For $u\in \fa$, we denote by $\R^+u \subset \fa$ the half-line generated by $u$. The following set describes the asymptotic directions in $\fa^+$ where $\Gamma$ grows.

\begin{definition}\label{def-limit set}
The \emph{ limit cone} is defined by:
$$C(\G):=\overline{\bigcap_{n\geq 0}  \bigcup_{\g\in \G, \, \| \kappa(\g)\|\geq n} \R^+\kappa(\g)}\subset \overline{\fa^+}.$$
\end{definition}

This limit cone was introduced by Benoist in \cite{benoist1997proprietes}, where the following striking fact was shown.

\begin{theorem}[Benoist, \cite{benoist1997proprietes}]
If $\G$ is Zariski dense, the limit cone $C(\G)$ is a closed convex subset of $\fa$ with non-empty interior.
\end{theorem}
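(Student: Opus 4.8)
The closedness of $C(\G)$ is immediate from Definition \ref{def-limit set}, since it is defined as a closure; the real content is convexity together with non-emptiness of the interior. The plan is to replace Cartan projections by Jordan projections and to exploit Zariski density through a ping-pong argument.

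First I would relate $C(\G)$ to the cone generated by Jordan projections. For a loxodromic element $\g\in\G$ (that is, $\overline{\fa^+}$-regular and $\R$-diagonalisable) write $\lambda(\g)\in\fa^+$ for its Jordan projection; since $\lambda(\g)=\lim_{n\to\infty}\kappa(\g^n)/n$, the ray $\R^+\lambda(\g)$ lies in $C(\G)$. Conversely, a Zariski dense subgroup contains ``many'' loxodromic elements (Abels--Margulis--Soifer, Benoist), and a perturbation argument shows that every accumulation direction of $\{\kappa(\g)\}$ is a limit of directions $\R^+\lambda(\g_k)$. Hence $C(\G)$ is exactly the closure of $\bigcup_{\g\ \mathrm{lox.}}\R^+\lambda(\g)$, and it suffices to prove that this union is ``essentially convex''.

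The heart of the argument is the multiplicativity of Jordan projections on transverse pairs. Given two loxodromic $g_1,g_2\in\G$, Zariski density lets one replace them by conjugates inside $\G$ so that their attracting and repelling flags are pairwise transverse. For such a pair, a ping-pong argument in the flag variety (equivalently, in a suitable proximal linear representation) shows that for all large $m,n$ the product $g_1^m g_2^n$ is again loxodromic and
$$
\big\|\lambda(g_1^m g_2^n)-\big(m\,\lambda(g_1)+n\,\lambda(g_2)\big)\big\|\le C,
$$
with $C$ independent of $m,n$. Dividing by $m+n$ and letting $m,n\to\infty$ along all ratios puts the whole two-dimensional cone $\R^+\lambda(g_1)+\R^+\lambda(g_2)$ inside $C(\G)$. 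Since any two elements of $C(\G)$ are limits of Jordan directions, a diagonal limiting argument then yields that $C(\G)$ is a convex cone; combined with the first step, it is a closed convex cone. Finally, for non-empty interior: were the interior empty, $C(\G)$ would lie in a hyperplane $\ker\phi$ with $\phi\in\cL(\fa,\R)\setminus\{0\}$, so $\phi$ would vanish on every $\lambda(\g)$; translating this through highest-weight theory (a suitable combination of the quantities $\log\|\rho(\g)\|$ would stay bounded, forcing $\rho(\G)$ into a proper subvariety) contradicts Zariski density, so the interior is non-empty.

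The main obstacle is the product estimate for Jordan projections of elements with transverse dynamics: this is the genuinely technical core, requiring careful ping-pong in flag varieties and control of the drift of Cartan and Jordan projections under multiplication. The perturbation statement used in the first step (that Cartan directions are approximated by Jordan directions) is also non-trivial and again rests on the abundance of loxodromic elements in Zariski dense groups.
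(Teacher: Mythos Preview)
The paper does not give its own proof of this statement: it is quoted as a background result of Benoist \cite{benoist1997proprietes}, with no argument supplied. So there is nothing in the paper to compare your proposal against.

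That said, your sketch is a faithful outline of Benoist's original proof. The identification of $C(\G)$ with the closure of the Jordan directions, the additivity estimate $\|\lambda(g_1^m g_2^n)-m\lambda(g_1)-n\lambda(g_2)\|\le C$ for elements in general position, and the hyperplane argument for non-empty interior are exactly the three pillars of \cite{benoist1997proprietes}. You correctly flag the two genuinely hard points (the product estimate via ping-pong in flag varieties, and the density of loxodromic/Jordan directions among Cartan directions), and you are right that neither is a triviality. One small caution: the non-empty interior step as you phrase it needs a bit more care than ``a suitable combination of $\log\|\rho(\g)\|$ stays bounded''; Benoist's actual argument uses that the Jordan projections of a Zariski dense semigroup cannot all lie in a rational hyperplane of $\fa$, which he deduces from his density theorem for subsemigroups of tori generated by eigenvalue vectors. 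Your formulation is the right intuition but would need that ingredient to close.
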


\subsubsection{Critical exponents}\label{sec-critical exponents}

We are interested in understanding the growth of the orbits $\G\cdot o $ inside $X$. We saw in Section \ref{sec-cartan projection} that in higher rank it is interesting to look at an $\fa^+$-valued ``distance'' function taking the direction in the flats into account. In order to study the growth of $\G\cdot o$ inside $X$ in different flat directions, we use linear forms on $\fa$ which are positive on the limit cone. 

Recall that we write $C(\G)\subset \fa$ for  the limit cone of $\G$. Let $\fa^* = \mathcal L(\fa, \R)$ be the set of linear forms on $\fa$ and denote by $C(\G)_+^*\subset \fa^*$ be the set of linear forms  which are positive on the limit cone: 
$$
C(\G)_+^*:=\{\varphi : \fa \tv \R\, |\, \forall x \in C(\G) \backslash\{0\},\, \phi(x) >0 \}. 
$$

 \begin{definition}\label{def-linear form postiive on C(G) and directional critical exponent}
 For all $\varphi \in \fa^*$ we define the critical exponent according to $\varphi$  by :
 $$\delta_\varphi(\G) := \limsup_{R\tv \infty} \frac{1}{R} \log \Card \{ \g \in \G\, |\, \varphi (a(\g\cdot o, o ) ) \leq R \}.$$
 \end{definition}

For every $\varphi \in \C(\G)_+^*$, the critical exponent $\delta_\varphi(\G)$ is independent of the chosen origin (cf Lemma \ref{lemma-triangle inequality in higher rank}) and is finite. Indeed, since $\varphi$ is positive on $ C(\G)$ and linear, there exists $c>0$ such that for all $x\in C(\G)$: 
$$ \varphi(x) > c \| x\|. $$
Therefore, 
$$
\delta_\varphi(\G) \leq  \limsup_{R\tv \infty} \frac{1}{R} \log \Card \{ \g \in \G\, |\,d(g o, o )\leq R/c \},
$$
which is finite since $\G$ is discrete. 

Conversely, if the kernel of $\phi\in \fa^*$ intersects the interior of $C(\G)$, the critical exponent $\delta_\phi(\G)$ is infinite. {We will not study in this paper the case of linear form $\phi\in \fa^*$ which are non-negative on $C(\Gamma)$ but vanishes on $\partial C(\Gamma)$.}


\subsubsection{$\varphi$-Conformal densities}\label{sec-PS measures}

\begin{definition}
For all $\delta>0$, a $\varphi$-conformal density of dimension $\delta$ is a family $\mu = (\mu_x)_{x\in X}$ of mutually absolutely continuous locally finite, borelian measures on $\cF$, such that for all $x,y\in X$,
$$\frac{d \mu_x}{d\mu_y} (\eta) = e^{-\delta \varphi(\beta_\eta(x , y ))}.$$
Such conformal density $\mu$ is $\G$-equivariant if, for all $\g\in \G$ and all $x\in X$, 
$$\g_*\mu_x  = \mu_{\g x}.$$
We denote by $M_\varphi(\delta, \G)$ the space of $\G$-equivariant, $\varphi$-conformal measure of dimension $\delta$. 
\end{definition}

Note that $M_\varphi(\delta, \G)$ is a cone: for all $\mu \in M_\varphi(\delta, \G)$ and all $\lambda>0$, we obviously have $\lambda\mu \in M_\varphi(\delta, \G)$. The following result is the starting point of our approach.

\begin{theorem}[\cite{quint2002mesures}, Section 8]\label{th:quint-patterson-sullivan}
Let $\G<G$ be a Zariski dense subgroup.  Then for all $\varphi \in C(\G)_+^*$ there exists a $\G$ equivariant, $\varphi$-conformal density of dimension $\delta_\varphi(\G)$, which is supported on the limit set $\Lambda_\Gamma \subset \mathcal F$.
\end{theorem}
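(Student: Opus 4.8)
The plan is to run the Patterson--Sullivan construction in this directional, higher-rank setting, with the linear form $\varphi$ composed with the $\fa$-valued Busemann cocycle playing the role of the distance. Fix $\varphi\in C(\G)_+^*$, write $\delta:=\delta_\varphi(\G)$, $o:=[K]\in X$, and fix a base point $\xi_0\in\cF$. First I would introduce the directional Poincaré series $\mathcal P_\varphi(s):=\sum_{\g\in\G}e^{-s\varphi(\kappa(\g))}$; as recalled in Section~\ref{sec-critical exponents}, positivity of $\varphi$ on $C(\G)$ gives a constant $c>0$ with $\varphi(\kappa(\g))\ge c\|\kappa(\g)\|$ for all but finitely many $\g$, so together with discreteness of $\G$ this series converges for $s>\delta$ and diverges for $s<\delta$. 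If $\mathcal P_\varphi(\delta)=+\infty$ I take $h\equiv1$; otherwise I invoke Patterson's classical thickening lemma to produce a non-decreasing, slowly varying weight $h\colon\R_{\ge0}\to\R_{>0}$ (i.e.\ $h(t+r)/h(t)\to1$ as $t\to+\infty$, uniformly for $r$ in compacts) for which $\mathcal P_\varphi^h(s):=\sum_{\g\in\G}h(\|\kappa(\g)\|)e^{-s\varphi(\kappa(\g))}$ still has critical exponent $\delta$ but now diverges at $s=\delta$.

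Next, for $s>\delta$ I would set
$$
\mu_x^s:=\frac{1}{\mathcal P_\varphi^h(s)}\sum_{\g\in\G}h(\|\kappa(\g)\|)\,e^{-s\varphi(a(x,\g o))}\,\cD_{\g\xi_0},\qquad x\in X,
$$
with $\cD_\eta$ the unit Dirac mass at $\eta\in\cF$, so that $\mu_o^s$ is a probability measure on the compact space $\cF$. By Lemma~\ref{lemma-triangle inequality in higher rank} (for a compact set containing $g_x$) one has $\|a(x,\g o)-a(o,\g o)\|\le C_x$ for a constant $C_x$ independent of $\g$, hence $\frac{d\mu_x^s}{d\mu_o^s}(\g\xi_0)=e^{-s\varphi(a(x,\g o)-a(o,\g o))}$ is bounded above and below uniformly for $s\in(\delta,\delta+1]$, and each $\mu_x^s$ has total mass in a fixed compact subinterval of $(0,+\infty)$. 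Using compactness of $\cF$ I would extract $s_n\downarrow\delta$ with $\mu_o^{s_n}\rightharpoonup\mu_o$, and then define $d\mu_x(\eta):=e^{-\delta\varphi(\beta_\eta(x,o))}\,d\mu_o(\eta)$ for all $x$; this gives back $\mu_o$ when $x=o$ since $\beta_\eta(o,o)=0$, and from the cocycle relation $\beta_\eta(x,y)=\beta_\eta(x,o)-\beta_\eta(y,o)$ the family $(\mu_x)$ is automatically $\varphi$-conformal of dimension $\delta$ with all $\mu_x$ mutually absolutely continuous — that is, the construction is conformal by design, and what must be verified is that this family is the genuine weak-$*$ limit of the $\mu^s$ and is $\G$-equivariant.

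The remaining points I would check are then: that $\mu_x^{s_n}\rightharpoonup\mu_x$ for every $x$ (so that $\mu$ inherits the equivariance of the $\mu^s$); that $\operatorname{supp}\mu\subset\Lambda_\G$; and that $\g_*\mu_x=\mu_{\g x}$. The support statement follows from escape of mass: since $\Lambda_\G$ is closed and is the accumulation set of $\G\xi_0$ (Theorem~\ref{th-limit set is zariski dense}), any compact $Q\subset\cF\setminus\Lambda_\G$ meets $\G\xi_0$ in a finite set, so $\mu_o^s(Q)$ equals a fixed finite sum divided by $\mathcal P_\varphi^h(s)\to+\infty$, giving $\mu_o(Q)=0$. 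Equivariance is the usual reindexing computation: using the $G$-invariance $a(\g x,\g' o)=a(x,\g^{-1}\g' o)$,
$$
\g_*\mu_x^s=\frac{1}{\mathcal P_\varphi^h(s)}\sum_{\g'\in\G}h(\|\kappa(\g^{-1}\g')\|)\,e^{-s\varphi(a(\g x,\g' o))}\,\cD_{\g'\xi_0},
$$
which differs from $\mu_{\g x}^s$ only in the weight $h(\|\kappa(\g^{-1}\g')\|)$ versus $h(\|\kappa(\g')\|)$; since $\big|\,\|\kappa(\g^{-1}\g')\|-\|\kappa(\g')\|\,\big|\le\|\kappa(\g)\|$ and $h$ is slowly varying, the ratio of these weights tends to $1$ as $\|\kappa(\g')\|\to\infty$, while the bounded-$\|\kappa(\g')\|$ part is killed in the limit as above, so $\g_*\mu_x^{s_n}$ and $\mu_{\g x}^{s_n}$ have the same weak-$*$ limit.

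The step I expect to be the real obstacle — and the only genuinely higher-rank one — is the convergence needed to identify the weak-$*$ limit of $\mu_x^{s_n}$ with $\mu_x$, namely $a(x,\g_k o)-a(o,\g_k o)\to\beta_\eta(x,o)$ whenever $\g_k\xi_0\to\eta$ in $\cF$, together with continuity of $\eta\mapsto\beta_\eta(x,o)$. In rank one this is immediate, but here $\g_k\xi_0\to\eta$ only records the flag at infinity of $\g_k o$, not its Weyl-chamber direction, while $\beta_\eta$ depends on $\eta$ alone; making this work requires the fine analysis of the Furstenberg boundary — the visible/invisible decomposition $V(\xi)$, $L(\xi)$ and its Hausdorff-continuity (Lemmas~\ref{lem-DK6.6}, \ref{lemma-thirion 1}, \ref{lemma-thirion 2}) — together with the eventual regularity of orbit points of a Zariski dense group, which is precisely the content of Section~8 of \cite{quint2002mesures}. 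Everything else is the classical Patterson--Sullivan argument transported verbatim.
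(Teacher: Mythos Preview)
The paper does not give its own proof of this theorem: it is quoted as a result from \cite{quint2002mesures} (with an analogous construction in \cite{Lin04}), and the surrounding text only says that ``these densities have been constructed in \cite{quint2002mesures} via an adaptation of the usual Patterson--Sullivan construction to higher rank symmetric space.'' So there is nothing in the paper to compare your argument to beyond that one-line pointer.

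Your sketch is in the right spirit, but there is one genuine wrong turn. You place the Dirac masses at $\g\xi_0\in\cF$ for a fixed $\xi_0\in\cF$, while weighting them by $e^{-s\varphi(a(x,\g o))}$. This mismatch is exactly what manufactures the obstacle you flag at the end: you then need $a(x,\g_k o)-a(o,\g_k o)\to\beta_\eta(x,o)$ whenever $\g_k\xi_0\to\eta$, and in higher rank convergence of $\g_k\xi_0$ in $\cF$ says very little about the Cartan projection of $\g_k$ (think of sequences that nearly fix $\xi_0$ while $\g_k o$ escapes in an unrelated Weyl-chamber direction). Quint's actual construction avoids this by working in a compactification $X\cup\cF$ and placing the Dirac masses at the orbit points $\g o\in X$; the weak-$*$ limit is then taken in the compactification, and the statement one must prove becomes the much more natural ``$a(x,y)-a(o,y)\to\beta_\eta(x,o)$ when $y\to\eta$ in the compactification'', which is essentially the defining property of the $\fa$-valued Busemann function and the content of the continuity results in \cite{quint2002mesures}. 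With that correction the rest of your outline --- Patterson's thickening, the support argument via divergence of $\mathcal P_\varphi^h$ at $\delta$, and the reindexing for equivariance --- goes through as you wrote it.
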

These densities {have been} constructed in \cite{quint2002mesures} via an adaptation of the usual Patterson-Sullivan construction to higher rank symmetric space. {A similar construction also appears in \cite{Lin04}.} The work of Dey-Kapovich  \cite{deykapovich}  proves their uniqueness (up to scaling) in the case of Anosov subgroup.

In our study of limit cones with respect to taking normal subgroup we will use the following observation, whose analogue in negative curvature is due to Roblin in \cite{roblin2005fatou}.

\begin{lemma}\label{lem-conjugate are density}
Let $\delta>0$ and $\mu\in M_\phi(\delta, \G)$. Let $g\in G$ be an isometry which normalizes $\G$. Then the family $\displaystyle \mu^g = (\mu^g_x)_{x\in X}$ defined for all $x\in X$ by
$\displaystyle \mu^g_x :=  \frac{1}{||\mu_{go}||} g_*^{-1} \mu_{g x}$ is also in $M_\phi(\delta, \G)$ and satisfies $||\mu_o^g|| = 1$. 
\end{lemma}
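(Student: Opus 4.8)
\textbf{Proof plan for Lemma \ref{lem-conjugate are density}.}

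The plan is to verify directly the three assertions: that $\mu^g = (\mu^g_x)_{x\in X}$ is a $\varphi$-conformal density of dimension $\delta$, that it is $\Gamma$-equivariant, and that $\|\mu^g_o\| = 1$. The last point is immediate: by definition $\mu^g_o = \frac{1}{\|\mu_{go}\|} g_*^{-1}\mu_{go}$, and since $g_*^{-1}$ is induced by the action of the isometry $g^{-1}$ on $\mathcal F$ it preserves total mass, so $\|\mu^g_o\| = \frac{1}{\|\mu_{go}\|}\|\mu_{go}\| = 1$ (here we use that $\mu_{go}$ is a finite measure — which holds because $\mu$ is supported on the compact set $\mathcal F$, or at least is locally finite on the compact $\mathcal F$). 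So the real content is the first two claims.

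For the conformal density property, I would compute $\frac{d\mu^g_x}{d\mu^g_y}(\eta)$ for $x,y\in X$ and $\eta\in\mathcal F$. Unwinding the definition, this equals $\frac{d(g_*^{-1}\mu_{gx})}{d(g_*^{-1}\mu_{gy})}(\eta)$, and by the change-of-variables formula for the pushforward by the diffeomorphism $g^{-1}$ of $\mathcal F$, this is $\frac{d\mu_{gx}}{d\mu_{gy}}(g\eta)$. Using that $\mu$ is a $\varphi$-conformal density of dimension $\delta$, this equals $e^{-\delta\varphi(\beta_{g\eta}(gx,gy))}$. The key geometric identity needed here is the $G$-equivariance of the $\mathfrak a$-valued Busemann cocycle, namely $\beta_{g\eta}(gx,gy) = \beta_\eta(x,y)$ for all $g\in G$; this follows from the corresponding equivariance $a(gx,gy) = a(x,y)$ (left $G$-invariance of the $\mathfrak a^+$-valued distance, clear from $a(x,y) = \kappa(g_y^{-1}g_x)$) by passing to the limit in the definition of $\beta$. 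Granting this, $\frac{d\mu^g_x}{d\mu^g_y}(\eta) = e^{-\delta\varphi(\beta_\eta(x,y))}$, which is exactly the required conformality relation; mutual absolute continuity and local finiteness are inherited from $\mu$ since $g_*^{-1}$ is a measurable isomorphism.

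For $\Gamma$-equivariance, fix $\gamma\in\Gamma$ and $x\in X$; I must check $\gamma_*\mu^g_x = \mu^g_{\gamma x}$. We have $\gamma_*\mu^g_x = \frac{1}{\|\mu_{go}\|}\gamma_* g_*^{-1}\mu_{gx} = \frac{1}{\|\mu_{go}\|}g_*^{-1}(g\gamma g^{-1})_*\mu_{gx}$, using $\gamma_* g_*^{-1} = g_*^{-1}(g\gamma g^{-1})_*$ as operators on measures (i.e. $\gamma\circ g^{-1} = g^{-1}\circ(g\gamma g^{-1})$ as maps of $\mathcal F$). Now $g$ normalizes $\Gamma$, so $\gamma' := g\gamma g^{-1}\in\Gamma$, and by $\Gamma$-equivariance of $\mu$ we get $(g\gamma g^{-1})_*\mu_{gx} = \gamma'_*\mu_{gx} = \mu_{\gamma' gx} = \mu_{g\gamma x}$. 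Substituting back gives $\gamma_*\mu^g_x = \frac{1}{\|\mu_{go}\|}g_*^{-1}\mu_{g\gamma x} = \mu^g_{\gamma x}$, as desired. I expect the only subtle point — though still routine — to be bookkeeping the order of composition when conjugating the pushforward operators, i.e. making sure one does not accidentally invert $g\gamma g^{-1}$ versus $g^{-1}\gamma g$; this is pinned down by the identity $\gamma g^{-1} = g^{-1}(g\gamma g^{-1})$, which is the normalization of $\Gamma$ by $g$ used in exactly the form stated. There is no serious obstacle here; the lemma is a formal consequence of the equivariance of Busemann cocycles and of the conjugation being an inner-type manipulation of pushforwards.
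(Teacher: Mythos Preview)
Your proof is correct and follows essentially the same route as the paper: the paper also verifies $\|\mu^g_o\|=1$ by construction, computes $\frac{d\mu^g_z}{d\mu^g_{z'}}(\eta)=\frac{d\mu_{gz}}{d\mu_{gz'}}(g\eta)=e^{-\delta\varphi(\beta_{g\eta}(gz,gz'))}=e^{-\delta\varphi(\beta_\eta(z,z'))}$ via $G$-equivariance of the Busemann cocycle, and proves $\Gamma$-equivariance by the identical conjugation identity $\gamma_* g_*^{-1}=g_*^{-1}(g\gamma g^{-1})_*$ together with $g\gamma g^{-1}\in\Gamma$. Your write-up is slightly more explicit about why the Busemann equivariance holds and about the bookkeeping of pushforwards, but there is no substantive difference in approach.
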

\begin{proof}

By construction, $||\mu_o^g|| = 1$. For all $z, z' \in X$, $\eta \in \cF$, we have

$$
 \frac{d\mu^g_z}{d\mu^g_{z'}} (\eta) = \frac{d\mu_{gz}}{d\mu_{gz'}}(g\eta)  = e^{-\delta \varphi (\beta_{g\eta} (gz,gz'))}= e^{-\delta  \varphi (\beta_{\eta} (z,z'))}. 
$$

Therefore the family $\nu^g$ is a $\varphi$-conformal density of dimension $\delta$.

\medskip

Moreover, for all $\g \in \G$, we have

$$
\g_* \mu_z^g = \frac{1}{\| \mu_{go} \| }\g_* g_*^{-1} \mu_{g z}=  \frac{1}{\| \mu_{go} \| } g_*^{-1} \left( g \g g^{-1}\right)_* \mu_{g z}=  g_*^{-1}\mu_{(g\g g^{-1}) g z}= \mu_{\g z } ^{g}.
$$

Therefore, $\mu^g\in M_\phi(\delta, \Gamma)$.

\end{proof}

The crucial feature linking the measurable properties  of the $\varphi$-conformal densities with  the dynamical action  of $\G$ is the so-called \emph{Shadow lemma}, which is originally due to Sullivan in \cite{Sul79} for hyperbolic manifolds. The same proof extends to CAT$(-1)$ metric spaces, see Lemma 1.3 of \cite{roblinergodicite}. In the higher rank setting, several variant of the Shadow Lemma are known, depending on the boundary and the densities which are considered. We will use the following version, which first appeared as Lemme 8.2 of \cite{quint2002mesures} {and Theorem 4.7 of \cite{Lin04}}. 

\begin{theorem}[Shadow Lemma, \cite{quint2002mesures,deykapovich} ]\label{lem-shadow lemma}
Let $\mu$ be a $\G$-equivariant, $\varphi$-conformal density of dimension $\delta>0$. For all $x\in X$ there exists $R_0>0$ and $C>0$ such that for all $r>R_0$, and for all $\g\in \G$. 
$$\frac{1}{C} e^{-\delta_\varphi \varphi(a(x, \g x))} \leq \mu_x( S(x,\g x, r ) )\leq C  e^{-\delta_\varphi \varphi(a(x, \g x))}.$$
\end{theorem}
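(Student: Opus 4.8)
The plan is to mimic Sullivan's original argument, using the two Busemann estimates (Lemma \ref{lemma-thirion 1} and Lemma \ref{lemma-thirion 2}) in place of the triangle inequality, together with a compactness argument on the Furstenberg boundary $\cF$ to handle the uniformity in $\g$. Fix $x\in X$ and let $\mu$ be a $\G$-equivariant $\varphi$-conformal density of dimension $\delta$. The upper bound is the easy half. Writing $S = S(x,\g x, r)$ and using $\G$-equivariance $\g_*\mu_x = \mu_{\g x}$, together with the conformality relation $\frac{d\mu_x}{d\mu_{\g x}}(\eta) = e^{-\delta\varphi(\beta_\eta(x,\g x))}$, we get
$$
\mu_x(S) = \int_S e^{-\delta\varphi(\beta_\eta(x,\g x))}\, d\mu_{\g x}(\eta).
$$
For $\eta\in S$ we have by Lemma \ref{lemma-thirion 1} that $\|\beta_\eta(x,\g x) - a(x,\g x)\|\le cr$, so, since $\varphi$ is a fixed linear form, $\varphi(\beta_\eta(x,\g x)) \ge \varphi(a(x,\g x)) - \|\varphi\|\, c r$. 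Hence $\mu_x(S)\le e^{\delta\|\varphi\| c r}\, e^{-\delta\varphi(a(x,\g x))}\,\mu_{\g x}(\cF) = e^{\delta\|\varphi\| c r}\, e^{-\delta\varphi(a(x,\g x))}\,\|\mu_o\|$, which gives the upper bound with $C = \|\mu_o\| e^{\delta\|\varphi\| c r}$ (using that $\mu_{\g x} = \g_*\mu_x$ has the same total mass as $\mu_o$ after translating $x$ to $o$; more precisely bound $\mu_{\g x}(\cF)$ by a constant depending only on $x$).

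For the lower bound, $\mu_x(S) = \int_S e^{-\delta\varphi(\beta_\eta(x,\g x))}\,d\mu_{\g x}(\eta) \ge e^{-\delta\|\varphi\| c r}\, e^{-\delta\varphi(a(x,\g x))}\,\mu_{\g x}(S)$, so it suffices to bound $\mu_{\g x}(S(x,\g x,r))$ from below by a positive constant independent of $\g$. Now $\mu_{\g x}(S(x,\g x,r)) = \mu_x(\g^{-1}S(x,\g x,r)) = \mu_x(S(\g^{-1}x, x, r))$. So I need: there is $r_0$ and $c_0 > 0$ such that for all $r > r_0$ and all $\g\in\G$, $\mu_x(S(\g^{-1}x, x, r))\ge c_0$. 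Writing $g_n = \g_n^{-1}$, a sequence of shadows $S(g_n x, x, r)$ seen from points $g_n x$ going to infinity sub-converges (after passing to a subsequence such that $g_n x \to \xi\in \cF$) to a set containing the complement of a neighborhood of $L(\xi)$, the set of points not visible from $\xi$. The key geometric input is that these shadows, for $r$ large, exhaust $\cF\setminus B(L(\xi),\epsilon)$ in the limit; this is where Lemma \ref{lem-DK6.6} enters, giving the required semicontinuity of $\tau\mapsto L(\tau)$. Since $\mu_x$ is supported on the limit set $\Lambda_\G$, which is Zariski dense (Theorem \ref{th-limit set is zariski dense}) and $L(\xi)$ is a proper Zariski closed subset of $\cF$ (hence $\mu_x(L(\xi)) = 0$ and in fact $\mu_x$ gives positive mass to any neighborhood complement $\cF\setminus B(L(\xi),\epsilon)$ for $\epsilon$ small), one obtains a uniform lower bound by a standard contradiction/compactness argument: if not, there is a sequence $\g_n$ with $\mu_x(S(\g_n^{-1}x,x,r))\to 0$; extract $\g_n^{-1}x\to\xi$; then for $n$ large the shadow contains $\cF\setminus B(L(\xi), \epsilon)$, whose $\mu_x$-measure is a fixed positive number, a contradiction.

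The main obstacle is the lower bound, and specifically the uniformity over $\g\in\G$: one must show that the shadow $S(g x, x, r)$, as $gx\to\infty$ in the direction of a boundary point $\xi$, eventually contains every point of $\cF$ that is "visible" from $\xi$, i.e. the complement of an arbitrarily small neighborhood of $L(\xi)$, and that the measure of this complement stays bounded below. This is exactly the content for which Dey--Kapovich's Lemma \ref{lem-DK6.6} was recorded in the background section; combined with the fact (from \cite{quint2002mesures}, Section 5) that $L(\xi)$ is Zariski closed and proper, and that $\mu_x$ is supported on the Zariski-dense limit set, it yields $\inf_{\xi} \mu_x\big(\cF\setminus B(L(\xi),\epsilon)\big) > 0$ for $\epsilon$ small enough, after a further compactness argument over $\xi\in\cF$. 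I would organize the write-up as: (i) upper bound via Lemma \ref{lemma-thirion 1}; (ii) reduction of the lower bound to a uniform lower bound on $\mu_x(S(\g^{-1}x,x,r))$; (iii) the geometric claim that large shadows from faraway points cover $\cF\setminus B(L(\xi),\epsilon)$, using Lemma \ref{lem-DK6.6}; (iv) the measure-positivity of these sets and the compactness argument closing the uniform bound. Since the statement is quoted from \cite{quint2002mesures,deykapovich}, it would also be legitimate to simply cite those references and only sketch (i)–(ii) here; but the plan above is the self-contained route.
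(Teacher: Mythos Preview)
Your outline is essentially the paper's: upper bound via conformality plus Lemma~\ref{lemma-thirion 1}, then reduce the lower bound to a uniform positive lower bound on $\mu_x(S(\g^{-1}x,x,r))$. The paper closes this last step in one line by invoking Lemmas~\ref{lem-DK 6.7} and~\ref{lem-DK 6.8} together: Lemma~\ref{lem-DK 6.8} says the complement of $S(\g^{-1}x,x,r)$ lies in $B(L(\tau),\epsilon)$ for some $\tau$, and Lemma~\ref{lem-DK 6.7} says $\mu_x(B(L(\tau),\epsilon))\le q<\mu_x(\Lambda_\G)$ uniformly in $\tau$, so $\mu_x(S(\g^{-1}x,x,r))\ge \mu_x(\Lambda_\G)-q>0$.

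Two small corrections to your version of the final step. First, you cite Lemma~\ref{lem-DK6.6} (Hausdorff continuity of $\tau\mapsto L(\tau)$) for the geometric input, but what you actually describe and need is Lemma~\ref{lem-DK 6.8}: that large shadows $S(y,x,r)$ omit only a neighborhood of some $L(\tau)$. Lemma~\ref{lem-DK6.6} only becomes relevant if you insist on the contradiction/subsequence argument (to pass from $L(\tau_n)$ to $L(\tau)$); that is precisely the route the paper takes later for the shadow \emph{principle} (Theorem~\ref{th-lem-shadow_principle_normalizer}, ``Lemme interne''), but it is unnecessary overhead for the Shadow Lemma itself, where Lemma~\ref{lem-DK 6.7} already packages the uniform measure bound. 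Second, your assertion that $\mu_x(L(\xi))=0$ is stronger than what Zariski density of $\Lambda_\G$ and Zariski closedness of $L(\xi)$ give you: those facts only yield $\Lambda_\G\not\subset L(\xi)$, hence (by minimality of $\Lambda_\G$ in the support of $\mu_x$) that $\mu_x$ is not supported on $L(\xi)$, i.e.\ $\mu_x(L(\xi))<\|\mu_x\|$. That weaker statement is all you need, and is exactly what Lemma~\ref{lem-DK 6.7} makes uniform in $\tau$.
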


We will now briefly present a proof of this Shadow Lemma, {since we will need most of its ingredients in the sequel}. We follow the same strategy as in Lemma 8.2 of \cite{quint2002mesures} eventhough the notations have evolved in the past 20 years. We {rely on two key lemmas which we quote from \cite{deykapovich}}. Equivalent statements are shown in Section 5 of \cite{quint2002mesures}.

We endow $\cF \simeq K/M$ with a $K$-invariant Riemmanian metric.  We have seen in \ref{def-shadow} the definitions of shadows $S(x,y,r)$, visible sets $V(\xi)$  and its complement $L(\xi)$. Our terminology and notations are close to \cite{deykapovich}, where we can find the two following lemmas. Recall that we have fixed a $K$-invariant Riemannian metric on $\mathcal F$ and that $\G$ is supposed to be Zariski dense in $G$. The point $o = [K]\in X = G/K$ provides us a fixed origin.

\begin{lemma}\cite[Lemma 6.7]{deykapovich}\label{lem-DK 6.7}
{For all $x\in X$}, there exists $q<\mu_x(\Lambda_\G)$ and $\epsilon$ such that for all $\tau \in \Lambda_\G$, and all Borel subsets $B$ of $\cF$  contained in the $\epsilon$ neighborhood of $L(\tau$), one has : $\mu_x(B) \leq q$. 
\end{lemma}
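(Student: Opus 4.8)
\emph{Proof strategy.} Write $m:=\mu_x(\cF)=\mu_x(\Lambda_\G)$, which is finite and positive since $\cF$ is compact and $\mu$ is (nonzero and) supported on $\Lambda_\G$ (Theorem \ref{th:quint-patterson-sullivan}). The plan is to first establish a \emph{pointwise} mass estimate, namely $\mu_x(L(\tau))<m$ for each $\tau\in\Lambda_\G$, and then to upgrade it to the desired uniform estimate over a small neighborhood of $L(\tau)$, uniformly in $\tau$, by combining a compactness argument with the Hausdorff semicontinuity of $\tau\mapsto L(\tau)$ recorded in Lemma \ref{lem-DK6.6}. For the pointwise estimate, I would first note that $\operatorname{supp}\mu_x=\Lambda_\G$: by the equivariance relation $\g_*\mu_x=\mu_{\g x}$ and the mutual absolute continuity of the $\mu_y$, the set $\operatorname{supp}\mu_x$ is a nonempty closed $\G$-invariant subset of $\cF$, hence contains $\Lambda_\G$ by minimality (Theorem \ref{th-limit set is zariski dense}), while it is contained in $\Lambda_\G$ by Theorem \ref{th:quint-patterson-sullivan}. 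Now fix $\tau\in\Lambda_\G$. Since $L(\tau)$ is a Zariski closed \emph{proper} subset of $\cF$ whereas $\Lambda_\G$ is Zariski dense, we cannot have $\Lambda_\G\subset L(\tau)$, so the open set $V(\tau)=\cF\setminus L(\tau)$ meets $\operatorname{supp}\mu_x$; therefore $\mu_x(V(\tau))>0$ and
\[
\mu_x(L(\tau))=m-\mu_x(V(\tau))<m .
\]

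The main work — and the step I expect to be the real obstacle — is turning this into a bound uniform in $\tau$ after thickening by a fixed $\epsilon$. I would argue by contradiction. If no pair $(\epsilon,q)$ with $q<m$ works, then for every $n\ge 1$ there is $\tau_n\in\Lambda_\G$ with $\mu_x\big(B(L(\tau_n),1/n)\big)>m-1/n$. Since $\Lambda_\G$ is closed in the compact space $\cF$, pass to a subsequence along which $\tau_n\to\tau_\infty\in\Lambda_\G$. Fix $\eta>0$; Lemma \ref{lem-DK6.6} provides $\delta>0$ such that $B(L(\tau'),\eta)\subset B(L(\tau_\infty),2\eta)$ whenever $\tau'\in B(\tau_\infty,\delta)$. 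For all sufficiently large $n$ we have $\tau_n\in B(\tau_\infty,\delta)$ and $1/n<\eta$, so $B(L(\tau_n),1/n)\subset B(L(\tau_\infty),2\eta)$ and hence $\mu_x\big(B(L(\tau_\infty),2\eta)\big)\ge m-1/n$; letting $n\to\infty$ gives $\mu_x\big(B(L(\tau_\infty),2\eta)\big)=m$. As this holds for every $\eta>0$ and $\bigcap_{\eta>0}B(L(\tau_\infty),2\eta)=L(\tau_\infty)$ (the set $L(\tau_\infty)$ being closed), continuity of the finite measure $\mu_x$ from above yields $\mu_x(L(\tau_\infty))=m$, contradicting the pointwise estimate above. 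Hence there exist $\epsilon>0$ and $q<m$ with $\mu_x\big(B(L(\tau),\epsilon)\big)\le q$ for all $\tau\in\Lambda_\G$, and then any Borel set $B\subset B(L(\tau),\epsilon)$ satisfies $\mu_x(B)\le \mu_x\big(B(L(\tau),\epsilon)\big)\le q$, which is the assertion.

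In short, the pointwise part is soft: it only uses that $\Lambda_\G$ is Zariski dense (equivalently, minimal), that $\operatorname{supp}\mu_x=\Lambda_\G$, and that each $L(\tau)$ is Zariski closed and proper. All the content is in the uniformity step, where the continuity Lemma \ref{lem-DK6.6} is exactly what is needed to keep the neighborhood sizes and the contradiction parameter from depending on $\tau$. The only other points to check are routine: that $\mu_x$ is finite (immediate from compactness of $\cF$) and that $m>0$ (so that "$q<m$" is meaningful), which holds because $\mu$ is a nonzero density supported on $\Lambda_\G$.
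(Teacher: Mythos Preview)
The paper does not give its own proof of this lemma; it is simply quoted from \cite[Lemma~6.7]{deykapovich}. Your argument is correct and is essentially the one given in that reference: Zariski density of $\Lambda_\G$ together with $\operatorname{supp}\mu_x\supseteq\Lambda_\G$ forces the pointwise inequality $\mu_x(L(\tau))<m$, and then compactness of $\Lambda_\G$ combined with the Hausdorff semicontinuity of $\tau\mapsto L(\tau)$ (Lemma~\ref{lem-DK6.6}) upgrades this to a uniform bound on $\epsilon$-neighbourhoods via the contradiction argument you describe.

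One small remark. You justify the equality $\mu_x(\cF)=\mu_x(\Lambda_\G)$ by invoking Theorem~\ref{th:quint-patterson-sullivan}, but that theorem only asserts the \emph{existence} of a density supported on $\Lambda_\G$; it does not say that every $\G$-equivariant $\varphi$-conformal density is. Since the lemma is being used in the proof of the Shadow Lemma for an arbitrary $\mu\in M_\varphi(\delta,\G)$, you need the inclusion $\operatorname{supp}\mu_x\subset\Lambda_\G$ in that generality. This is indeed true and standard (it is shown in \cite{quint2002mesures}, and is also the setting assumed in \cite{deykapovich}), but the citation should point there rather than to Theorem~\ref{th:quint-patterson-sullivan}. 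Once that is noted, your proof is complete; in particular, the uniformity step via Lemma~\ref{lem-DK6.6} is exactly the right mechanism and is carried out cleanly.
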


\begin{lemma}\cite[Lemma 6.8]{deykapovich}\label{lem-DK 6.8}
For every $\epsilon>0$, there exists $r_1>0$ such that for all $r\geq r_1$ and all $x\in X$, the complement of $S(x, o,r)$ in $\cF$ is contained in a $\epsilon$-neighborhood of $L(\tau)$ for some $\tau$ in $S(o, x,0)$.
\end{lemma}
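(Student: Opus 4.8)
The plan is to argue by contraposition and to use the $K$-equivariance of all the objects involved in order to reduce to a single distinguished direction. Write $x=k\exp(v)o$ with $k\in K$ and $v\in\overline{\fa^+}$, so that $d(o,x)=\|v\|$, and set $\tau:=kP\in\cF\simeq K/M$. Since the geometric Weyl chamber $k\exp(\overline{\fa^+})$ has origin $o$, class $\tau$ and contains $x$, we have $\tau\in S(o,x,0)$, and this is the direction I will use throughout. If $d(o,x)\le r$ then $x\in B(o,r)$ and $S(x,o,r)=\cF$, so the statement is trivial; hence I assume $\|v\|>r$. Because $G$ acts on $\cF$ by isometries permuting geometric Weyl chambers, shadows are equivariant, $S(x,o,r)=k\,S(\exp(v)o,o,r)$, and likewise $L(\tau)=k\,L(\tau_0)$ with $\tau_0:=P$; as the metric on $\cF\simeq K/M$ is $K$-invariant, applying $k^{-1}$ reduces everything to the following assertion: for every $\epsilon>0$ there is $r_1>0$ such that, for all $r\ge r_1$ and all $v\in\overline{\fa^+}$ with $\|v\|>r$, every $\eta\in\cF$ with $d(\eta,L(\tau_0))\ge\epsilon$ lies in $S(\exp(v)o,o,r)$.

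The crux of the argument, and the step I expect to be the main obstacle, is a quantitative form of visibility, which I would isolate as a claim: there is a constant $D(\epsilon)>0$ such that whenever $d(\eta,L(\tau_0))\ge\epsilon$, the flat $F_\eta$ with backward class $[F_\eta|_{-\fa^+}]=\tau_0$ and forward class $[F_\eta|_{\fa^+}]=\eta$ satisfies $d(o,F_\eta)\le D(\epsilon)$. The condition $d(\eta,L(\tau_0))\ge\epsilon$ forces $\eta\in V(\tau_0)$, so $F_\eta$ exists, and $\{\eta:\ d(\eta,L(\tau_0))\ge\epsilon\}$ is a compact subset of the open set $V(\tau_0)$. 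On $V(\tau_0)$ the function $\eta\mapsto d(o,F_\eta)$ is continuous, and it tends to $+\infty$ as $\eta\to L(\tau_0)$: realizing $V(\tau_0)$ as the orbit of the stabilizer of $\tau_0$ through the opposite chamber of the standard flat $\exp(\fa)o$, the flat $F_\eta$ is the translate of $\exp(\fa)o$ by the corresponding unipotent element, whose displacement of $o$ blows up exactly as $\eta$ degenerates onto $L(\tau_0)$, a degeneration controlled by the Hausdorff continuity of $\tau\mapsto L(\tau)$ (Lemma \ref{lem-DK6.6}). Compactness then produces the finite bound $D(\epsilon)$. Establishing this blow-up cleanly, rather than merely pointwise continuity, is where the structure of $V(\tau_0)$ as an open unipotent orbit and the geometry of the symmetric space genuinely enter.

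Granting the claim, I fix $\eta$ with $d(\eta,L(\tau_0))\ge\epsilon$ and set $F=F_\eta$, with $d(o,F)\le D(\epsilon)$. The backward chamber of $F$ and the standard chamber $\exp(\overline{\fa^+})o$ both have class $\tau_0$ and origins within $D(\epsilon)$ of $o$; since two geometric Weyl chambers of the same class in a nonpositively curved symmetric space stay at a distance controlled by the distance of their origins, the point $x=\exp(v)o$ lies within $O(D(\epsilon))$ of a point $q\in F$ on the backward chamber of $F$. As $\|v\|>r$ is large, $q$ is deep in the $\tau_0$-direction of $F$, so the geometric Weyl chamber contained in $F$ issued from $q$ toward the forward endpoint $\eta$ transits through the central region of $F$ nearest $o$, hence passes within $D(\epsilon)$ of $o$. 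Transporting the origin of this chamber from $q$ back to $x$ alters the $\fa$-valued distance to $o$ by a bounded amount (Lemma \ref{lemma-triangle inequality in higher rank}), producing a geometric Weyl chamber with origin $x$, class $\eta$, that meets $B(o,r_1)$ for a constant $r_1=r_1(\epsilon)$ of the form $C\cdot D(\epsilon)$. Thus $\eta\in S(x,o,r)$ for all $r\ge r_1$, which is precisely the reduced statement, and undoing the reduction by $k$ completes the proof.
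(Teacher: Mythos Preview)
The paper does not give its own proof of this statement: Lemma~\ref{lem-DK 6.8} is simply quoted from \cite[Lemma~6.8]{deykapovich} as one of the two external ingredients feeding into the proof of the Shadow Lemma (Theorem~\ref{lem-shadow lemma}). There is therefore no argument in the paper to compare your proposal against.

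That said, your outline is the standard one and is essentially what Dey--Kapovich carry out: reduce by $K$-equivariance to the model direction $\tau_0=P$, note that $\{\eta\in\cF:d(\eta,L(\tau_0))\ge\epsilon\}$ is a compact subset of the open visibility set $V(\tau_0)$, bound $d(o,F_\eta)$ uniformly on it by continuity of $\eta\mapsto F_\eta$, and conclude via the CAT(0) convexity of the distance between asymptotic Weyl chambers. Two minor comments on the write-up. First, the blow-up of $d(o,F_\eta)$ as $\eta\to L(\tau_0)$ is unnecessary, and the appeal to Lemma~\ref{lem-DK6.6} there is a red herring: continuity of a real-valued function on a compact set already yields the uniform bound $D(\epsilon)$, so you can delete that digression. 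Second, the passage ``the Weyl chamber from $q$ toward $\eta$ transits through the central region of $F$ nearest $o$'' hides a parametrization issue: you are matching the \emph{forward} standard chamber $\exp(\overline{\fa^+})o$ (class $\tau_0$) with the \emph{backward} chamber of $F_\eta$ (also class $\tau_0$), which involves the longest Weyl element, and one should check in flat coordinates on $F_\eta$ that the point $p$ nearest $o$ actually lies in the forward $\overline{\fa^+}$-sector based at $q$. This holds, but it is the one place where something beyond a slogan is needed; Lemma~\ref{lemma-triangle inequality in higher rank} alone does not do the transport you invoke it for.
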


\begin{proof}[Proof of Theorem \ref{lem-shadow lemma}]

{We fix $x\in X$. Let us remark first} that {since $\mu$ is $\delta$-conformal}, for all $y\in X $ and all $r>0$ {we have} 
\begin{eqnarray}\label{eq:conformal-invariance}
\mu_x(S(x,y , r) )= \int_{S(  x, y , r)} e^{-\delta \varphi (\beta_{\xi} ( x, y)) } d\mu_{y}	(\xi)	
\end{eqnarray}
{By} Lemma \ref{lemma-thirion 1}, there exists $c>0$ such that for all $\xi \in S(  x, y , r)$, one has $|\varphi (\beta_{\xi} ( x, y)) - \varphi(a(x,y))| \leq cr$. This implies that
\begin{eqnarray}\label{shadow fundamental inequality}
\mu_x(S(x,y , r) )\leq e^{\delta c r} \mu_y(S(x,y,r))  e^{-\delta \varphi(a (x,y))} \leq C||\mu_y|| e^{-\delta \varphi(a (x,y))}.
\end{eqnarray}
Moreover since $\mu$ is $\G$-invariant, we have for all $\gamma\in \G$ $||\mu_{\gamma x}|| = ||\mu_x||$. This implies the upper bound given by the Shadow Lemma.

Let us show the lower bound.
\begin{eqnarray*}
\mu_x(S(x,\g x , r) ) &=& \mu_{\g^{-1} x} (S( \g^{-1} x, x , r)) \\
 							&=& \int_{S( \g^{-1} x, x , r)} e^{-\delta \varphi (\beta_{\xi} (\g^{-1} x, x)) } d\mu_{x}\\
 							& \geq  & \frac{1}{e^{cr}}\mu_x (S( \g^{-1} x, x , r))  e^{-\delta \phi(a(x,\g x )) },
\end{eqnarray*}
the last inequality coming from Lemma \ref{lemma-thirion 1}. Now using  Lemmas \ref{lem-DK 6.7} and \ref{lem-DK 6.8}, we get the lower bound $\mu_x (S( \g^{-1} x, x , r)) \geq  \frac{1}{C}$, which finishes the proof.

\end{proof}
Note that the Shadow Lemma implies in particular that if $\mu\in M_\phi(\delta, \Gamma)$, then there exists $C>0$ such that for all $\gamma\in \Gamma$,
\begin{equation}\label{eq:lower-bound-mass}
\|\mu_{\gamma o}\|\geq \frac 1 C e^{-\delta \phi(a(o, \gamma o))}.
\end{equation}

{This standard version of the Shadow Lemma is only valid on orbits of the discrete group $\Gamma$. The proofs of our main result rely on an extension of this Shadow Lemma, called \emph{Shadow principle}, which holds on much larger sets: the orbits of the full normalizer of $\G$ in $G$. This was first proven in negative curvature by Roblin in \cite[Théorème 1.1.1]{roblin2005fatou}. We will extend it to higher rank in the next section.}

\section{Normal subgroup and asymptotic invariants}\label{sec-subgroup and critical exponent}

In this section we prove the first main theorems of the paper. In Section \ref{sec-extending the shadow lemma}, we prove our main tool, given in Theorem \ref{th-lem-shadow_principle_normalizer}, which is the extension of the Shadow Lemma to the orbits of the normalizer of any Zariski dense discrete group. As a corollary we get {that the limit cone of a Zariski dense group is the same as the limit cone of any of its Zariski dense normal subgroups}. We eventually prove in Section \ref{sec- normal subgroup and critical exponent} the inequality between critical exponents announced in Theorem \ref{th-main limit cone} .

\subsection{Shadow principle and limit cones}\label{sec-extending the shadow lemma}

The following extension of the Shadow Lemma was called \emph{shadow principle} by Roblin in \cite{roblin2005fatou}.

\begin{definition}
Let {$\Gamma<G$ be a Zariski dense discrete subgroup,} $\phi\in C(\Gamma)_+^*$ and $\delta>0$. We say that a set $Y\subset X$ satisfies the \emph{shadow  principle} {for $M_\phi(\delta, \Gamma)$.} if there exists $R,C>0$ such that for all $\mu\in M_\phi(\delta, \Gamma)$, all $r\geq R$ and all $x,y \in Y$,
$$\frac{1}{C} \| \mu_y \| e^{-\delta \varphi(a(x,y))} \leq \mu_x( S(x,y, r ) )\leq C\| \mu_y \|  e^{-\delta (\G) \varphi(a(x, y))}.$$
\end{definition}

It follows from (\ref{shadow fundamental inequality}) that the upper bound in the shadow principle is a mere consequence of the $\phi$-conformality of dimension $\delta$, valid for all $Y\subset X$. All the interest of this property relies hence in the lower bound. Note also that the shadow principle is invariant by scaling of the conformal density. It is hence enough to check it on conformal densities such that $||\mu_o|| = 1$, where $o\in X$ is a fixed origin. The end of this paragraph is devoted to the proof of the following result, which extends Theorem 1.1.1 of \cite{roblin2005fatou} to higher rank symmetric spaces.

\begin{theorem}\label{th-lem-shadow_principle_normalizer}
Let $G$ be a real linear,  semisimple, connected, Lie group with finite center and $\G$ be a Zariski dense subgroup of $G$. Let $N(\G)$ be the normalizer of $\G$ in $G$.

For all $\phi\in C(\Gamma)_+^*$, all $\delta>0$ and all $x\in X$, the orbit $Y = N(\G)\cdot x$ satisfies the shadow principle for $M_\phi(\delta, \Gamma)$.\footnote{The result is true for all $\delta>0$. However, {it follows from \cite{quint2002mesures} that} when $\delta<\delta_\phi(\G)$ then $M_\phi(\delta, \Gamma)=\emptyset$. }
\end{theorem}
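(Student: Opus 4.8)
The plan is to follow Roblin's strategy for the shadow principle, now in higher rank. Fix $\phi\in C(\Gamma)_+^*$, $\delta>0$ and $x\in X$. As noted after the definition, the upper bound is automatic from $\phi$-conformality, so the task is the lower bound $\mu_x(S(x,y,r))\geq \frac1C\|\mu_y\|e^{-\delta\phi(a(x,y))}$ for $y\in N(\Gamma)\cdot x$ and all $\mu\in M_\phi(\delta,\Gamma)$ with $\|\mu_o\|=1$. Write $y = g\cdot x$ with $g\in N(\Gamma)$. First I would reduce to a statement about $\Gamma$-orbits: by Lemma \ref{lem-conjugate are density}, the pushed-forward and renormalized density $\mu^g$ lies again in $M_\phi(\delta,\Gamma)$, and the conformal/equivariance identities let me rewrite $\mu_x(S(x, gx, r))$ in terms of $\mu^{g}$ evaluated on a shadow based at $x$ and at a point of the form (something)$\cdot x$. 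The point is that $g$ ranges over the \emph{normalizer}, not over $\Gamma$, so I cannot directly quote Theorem \ref{lem-shadow lemma}; instead I must re-run its proof with $g$ in place of $\gamma$, and the only place $\gamma\in\Gamma$ was used is in invoking Lemmas \ref{lem-DK 6.7} and \ref{lem-DK 6.8} to bound $\mu_x(S(g^{-1}x, x, r))$ from below by $\frac1C$.

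So the heart of the argument is: show there are $r_1,C>0$, independent of $g\in N(\Gamma)$ and of $\mu\in M_\phi(\delta,\Gamma)$ with $\|\mu_o\|=1$, such that $\mu_x\big(S(g^{-1}x, x, r)\big)\geq \frac1C$ for $r\geq r_1$. Following the proof of the Shadow Lemma, Lemma \ref{lem-DK 6.8} gives a $\tau\in S(o,g^{-1}x,0)$ (or rather $S(x, g^{-1}x,0)$, adjusting base points via Lemma \ref{lemma-triangle inequality in higher rank}) such that $\mathcal F\setminus S(g^{-1}x,x,r)$ lies in an $\epsilon$-neighborhood of $L(\tau)$, for $r$ large depending only on $\epsilon$. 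Then by Lemma \ref{lem-DK 6.7}, $\mu_x\big(B(L(\tau),\epsilon)\big)\leq q<\mu_x(\Lambda_\Gamma)$, so $\mu_x(S(g^{-1}x,x,r))\geq \mu_x(\Lambda_\Gamma)-q>0$. The subtlety is uniformity over all $\mu$ and all $g$: Lemma \ref{lem-DK 6.7} as quoted gives $q$ and $\epsilon$ depending on $\mu$, and here $\mu$ varies over the whole (noncompact-looking) cone $M_\phi(\delta,\Gamma)$. I would handle this by a compactness argument: the set $\{\mu\in M_\phi(\delta,\Gamma): \|\mu_o\|=1\}$ is weak-$*$ compact (the densities are supported on the compact set $\Lambda_\Gamma\subset\mathcal F$, with uniformly bounded total mass by Lemma \ref{lemma-thirion 2} and the conformality relation), the maps $\mu\mapsto \mu_x(\Lambda_\Gamma)$ and, for fixed $\epsilon$, $\tau\mapsto \sup_{\|\mu_o\|=1}\mu_x(B(L(\tau),\epsilon))$ are (semi)continuous in the relevant variables — using Lemma \ref{lem-DK6.6} for the continuity of $\tau\mapsto L(\tau)$ — and $\tau$ itself ranges over the compact set $\Lambda_\Gamma$. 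A uniform $q<\inf_\mu \mu_x(\Lambda_\Gamma)$ and a uniform $\epsilon$ therefore exist, provided $\inf_{\|\mu_o\|=1}\mu_x(\Lambda_\Gamma)>0$; this last positivity follows from \eqref{eq:lower-bound-mass} applied at $\gamma=\mathrm{id}$ together with conformality, or more robustly from lower semicontinuity of $\mu\mapsto\mu_x(\Lambda_\Gamma)$ on the compact set $\{\|\mu_o\|=1\}$ combined with the fact that no element of $M_\phi(\delta,\Gamma)$ is the zero measure.

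With that uniform lower bound in hand, unwinding the substitution $\mu\leftrightarrow\mu^g$ and reinstating the factor $e^{-\delta\phi(a(x,y))}$ via Lemma \ref{lemma-thirion 1} (exactly as in the displayed computation proving Theorem \ref{lem-shadow lemma}) yields $\mu_x(S(x,y,r))\geq \frac1C \|\mu_y\| e^{-\delta\phi(a(x,y))}$, which is the shadow principle for $Y = N(\Gamma)\cdot x$. I expect the main obstacle to be precisely the \textbf{uniformity in $\mu$ over the whole cone $M_\phi(\delta,\Gamma)$}: Dey--Kapovich state their lemmas for a single density, so I must either reprove their Lemmas \ref{lem-DK 6.7}--\ref{lem-DK 6.8} with constants uniform over a weak-$*$ compact family, or package the compactness argument cleanly enough that uniformity is transparent. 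A secondary, more bookkeeping-type issue is keeping the base-point changes (between $o$, $x$, $g^{-1}x$) under control using Lemma \ref{lemma-triangle inequality in higher rank}, so that the constants $R$ and $C$ genuinely do not depend on $g\in N(\Gamma)$.
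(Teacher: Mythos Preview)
Your strategy is essentially the paper's: reduce via the conjugate density $\mu^g$ (this is the content of the paper's Lemma~\ref{lem - transform left inequality}), then obtain a lower bound on $\nu_o(S(g^{-1}o,o,r))$ uniform over all normalized $\nu\in M_\phi(\delta,\Gamma)$ and all $g\in N(\Gamma)$, using Lemmas~\ref{lem-DK 6.7}--\ref{lem-DK 6.8} plus compactness. Two remarks are in order.

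First, a genuine (though minor) slip: you claim ``$\tau$ itself ranges over the compact set $\Lambda_\Gamma$'', but since $g\in N(\Gamma)$ rather than $\Gamma$, the point $\tau\in S(o,g^{-1}o,0)$ furnished by Lemma~\ref{lem-DK 6.8} has no reason to lie in $\Lambda_\Gamma$; it only lies in $\mathcal F$. So you cannot invoke Lemma~\ref{lem-DK 6.7} as stated. This is fixable --- the same Zariski-density argument proves Lemma~\ref{lem-DK 6.7} for all $\tau\in\mathcal F$ --- but you must say so.

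Second, the paper implements the uniformity over $\mu$ and $g$ not by the direct (semi)continuity bookkeeping you sketch, but by the cleaner equivalent contradiction: assume a bad sequence $(\mu^i,g_i,R_i)$ exists, form $\nu^i=(\mu^i)^{g_i}\in M_\phi(\delta,\Gamma)$ with $\|\nu^i_o\|=1$, extract a weak-$*$ limit $\nu^\infty\in M_\phi(\delta,\Gamma)$ and (via Lemma~\ref{lem-DK6.6}) a limit $\tau\in\mathcal F$, and observe directly that $\nu^\infty_o$ is supported on $L(\tau)$; since the support of any element of $M_\phi(\delta,\Gamma)$ is closed and $\Gamma$-invariant it contains $\Lambda_\Gamma$, forcing $\Lambda_\Gamma\subset L(\tau)$, which contradicts Theorem~\ref{th-limit set is zariski dense}. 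This sidesteps the delicate joint (semi)continuity of $(\mu,\tau)\mapsto \mu_x(B(L(\tau),\epsilon))$ that your direct approach would require, and it does not need to know that every $\mu\in M_\phi(\delta,\Gamma)$ is supported on $\Lambda_\Gamma$ --- only that its support \emph{contains} $\Lambda_\Gamma$, which is immediate from minimality.
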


Let us first reduce the proof to a simpler statement, where one of the base points is fixed.
\begin{lemma}\label{lem - transform left inequality}
Let $G$ be a real linear,  semisimple, connected, Lie group with finite center and $\G$ be a Zariski dense subgroup of $G$, with normalizer $N(\G)$. Let us fix $\phi\in C(\Gamma)_+^*$, $\delta>0$ and $o\in X$. 

The orbit $Y = N(\G)\cdot o$ satisfies the shadow principle for $M_\phi(\delta, \Gamma)$ if and only if there exists $R,C>0$ such that for all $y\in Y$ and all $\mu\in M_\phi(\delta, \Gamma)$,
\begin{equation}\label{eq:weak-shadow-principle}
\mu_y(S(o,y,R)) \geq \frac 1 C ||\mu_y||.
\end{equation}
\end{lemma}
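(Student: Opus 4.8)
The key point is that the shadow principle — which involves two varying base points $x,y\in Y$ and a uniform constant — can be reduced to the seemingly weaker inequality \eqref{eq:weak-shadow-principle}, in which only $y$ varies and $o$ is fixed. The ``only if'' direction is trivial: if $Y$ satisfies the shadow principle, then taking $x = o$ in the lower bound of the shadow principle and noting $e^{-\delta\varphi(a(o,y))}$ is bounded below once we also have the mass estimate — actually more directly, the lower bound of the shadow principle with $x=o$ reads $\mu_o(S(o,y,r)) \geq \frac 1 C \|\mu_y\| e^{-\delta\varphi(a(o,y))}$; but to get \eqref{eq:weak-shadow-principle} we instead swap roles and use the shadow principle with the pair $(y,o)$, i.e. $\mu_y(S(y,o,r)) \geq \frac 1 C \|\mu_o\| e^{-\delta\varphi(a(y,o))}$, which is not quite \eqref{eq:weak-shadow-principle} either. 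The clean way: \eqref{eq:weak-shadow-principle} is literally the instance of the lower bound in the shadow principle at the pair of points $(x,y) = (o,y)$ after using conformality once more to trade the shadow from $o$ to the shadow measured from $y$. So I would spell out that $\mu_y(S(o,y,R))$ and $\mu_o(S(o,y,R))$ differ by a factor comparable to $e^{\pm\delta\varphi(a(o,y))}$ via \eqref{eq:conformal-invariance}, and hence \eqref{eq:weak-shadow-principle} is equivalent to the $x=o$ case of the lower bound. That handles ``only if.''

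For the ``if'' direction — the substantive one — I would argue as follows. Assume \eqref{eq:weak-shadow-principle} holds with constants $R_0, C_0$. Fix arbitrary $x,y\in Y = N(\Gamma)\cdot o$, so $x = g o$, $y = h o$ with $g, h\in N(\Gamma)$. The upper bound of the shadow principle is free, coming from \eqref{shadow fundamental inequality} and the $\Gamma$-invariance giving $\|\mu_{\gamma o}\| = \|\mu_o\|$; one just needs the analogue on the whole $N(\Gamma)$-orbit, which follows because for $g\in N(\Gamma)$ the conformal density $\mu^g$ of Lemma \ref{lem-conjugate are density} again lies in $M_\phi(\delta,\Gamma)$, so masses along the orbit are controlled. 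For the lower bound, I would translate everything by $h^{-1}$: applying $h^{-1}$ (an isometry) to the configuration $(x,y)$ sends $y\mapsto o$ and $x\mapsto h^{-1}x =: x'\in Y$, and using $N(\Gamma)$-equivariance of the family of densities (more precisely the relation $h_*\mu^{\cdot}_{\cdot} = \mu^{\cdot}_{h\cdot}$ packaged via Lemma \ref{lem-conjugate are density}) reduces the desired bound $\mu_x(S(x,y,r)) \geq \frac 1C\|\mu_y\|e^{-\delta\varphi(a(x,y))}$ to a bound of the form $\nu_o(S(x',o,r)) \geq \frac1C\|\nu_o\|e^{-\delta\varphi(a(x',o))}$ for the conformal density $\nu := \mu^{h^{-1}}$ (or its rescaling) in $M_\phi(\delta,\Gamma)$. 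Then, writing $x' = h' o$ with $h'\in N(\Gamma)$ and applying conformality \eqref{eq:conformal-invariance} once to pass from $\nu_o(S(x',o,r))$ to $\nu_{x'}(S(x',o,r))$ at the cost of the factor $e^{-\delta\varphi(a(x',o))}$ controlled by Lemma \ref{lemma-thirion 1} on the shadow, it remains to bound $\nu_{x'}(S(x',o,r))$ from below by $\frac1C\|\nu_{x'}\|$ — and this is exactly hypothesis \eqref{eq:weak-shadow-principle} applied to $\nu$ at the orbit point $x'\in Y$ (after another translation to put $o$ in the ``moving'' slot, i.e. using that $S(x',o,r)$ and $S(o, x', r)$-type shadows are interchanged by the $N(\Gamma)$-symmetry). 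I would also invoke Lemma \ref{lemma-triangle inequality in higher rank} to absorb the bounded discrepancies in the $\fa$-valued distance coming from the extra compact-group factors $k_1,k_2$ appearing when one moves base points by elements of $N(\Gamma)$.

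The main obstacle, and the part that needs care rather than being purely formal, is the bookkeeping of how the conformal densities transform under $N(\Gamma)$: elements of $N(\Gamma)$ are not in $\Gamma$, so $\mu_{g o}\neq \mu_o$ in general, and the mass $\|\mu_{go}\|$ genuinely varies along the $N(\Gamma)$-orbit. The device of Lemma \ref{lem-conjugate are density} — replacing $\mu$ by the normalized conjugate $\mu^g\in M_\phi(\delta,\Gamma)$ — is precisely what lets us reduce any configuration on the $N(\Gamma)$-orbit to a configuration where one point is $o$ and the relevant density is again in the same cone $M_\phi(\delta,\Gamma)$; the subtlety is checking that the constants $R, C$ one extracts are uniform over \emph{all} $\mu\in M_\phi(\delta,\Gamma)$ and all orbit points simultaneously, which is what makes \eqref{eq:weak-shadow-principle}, quantified over all $y\in Y$ and all $\mu$, the right intermediate statement. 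Once that reduction is in place, the two inequalities of the shadow principle follow by combining \eqref{eq:conformal-invariance}, Lemma \ref{lemma-thirion 1}, and \eqref{eq:weak-shadow-principle}, with Lemma \ref{lemma-triangle inequality in higher rank} mopping up bounded errors.
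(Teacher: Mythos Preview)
Your overall strategy---reduce the two-point shadow principle to the one-point estimate \eqref{eq:weak-shadow-principle} via the conjugate density $\mu^g$ of Lemma~\ref{lem-conjugate are density}---is exactly the paper's. But you translated the \emph{wrong} point, and this is what makes your argument tangled. The paper takes $g\in N(\Gamma)$ with $go=x$ and sets $z:=g^{-1}y$, so that $x\mapsto o$ and $y\mapsto z$. Then
\[
\mu^g_z(S(o,z,R))=\frac{1}{\|\mu_x\|}\,\mu_y(S(x,y,R)),
\qquad
\|\mu^g_z\|=\frac{\|\mu_y\|}{\|\mu_x\|},
\]
and hypothesis \eqref{eq:weak-shadow-principle} applied to $\mu^g$ at $z\in Y$ gives $\mu_y(S(x,y,R))\geq \frac{1}{C}\|\mu_y\|$ in one step; conformality then yields the full lower bound. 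By translating $y\to o$ instead, you land on a shadow $S(x',o,r)$ with the roles of source and target reversed relative to \eqref{eq:weak-shadow-principle}, forcing you into a second translation. Along the way your reduction ``$\nu_o(S(x',o,r))\geq \frac{1}{C}\|\nu_o\|e^{-\delta\varphi(a(x',o))}$'' is not the correct target: unwinding $\mu_x(S(x,y,r))=\|\mu_y\|\,\nu_{x'}(S(x',o,r))$ and using Lemma~\ref{lemma-thirion 1} shows that what one actually needs is $\nu_o(S(x',o,r))\geq \frac{1}{C'}$ with \emph{no} exponential factor. Your subsequent step, where $\|\nu_o\|$ morphs into $\|\nu_{x'}\|$, does not follow from conformality on the shadow.

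Two smaller points. First, the upper bound of the shadow principle is literally \eqref{shadow fundamental inequality} and holds for \emph{all} $x,y\in X$; no $\Gamma$-invariance of masses or orbit structure is needed there. Second, Lemma~\ref{lemma-triangle inequality in higher rank} plays no role: translations by elements of $N(\Gamma)$ are isometries, so $a(x,y)=a(g^{-1}x,g^{-1}y)$ exactly, with no ``compact-group factors'' to absorb. If you redo the argument translating $x$ to $o$, the proof collapses to three lines and all the bookkeeping disappears.
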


\begin{proof}
{We fix $o\in X$ and denote by $Y=N(\G)\cdot o$ the orbit of $o$ under the normalizer of $\G$ in $G$. As already mentionned, we only have to deal with the lower bound of the shadow principle, since the upper bound is satisfied for all $Y\subset X$. Note also that, by Equation (\ref{eq:conformal-invariance}) and Lemma \ref{lemma-thirion 1}, if $Y$ satisfies the shadow principle then (\ref{eq:weak-shadow-principle}) is satisfied for all $y\in Y$. Therefore, we are only left with showing that, if for all $\mu\in M_\phi(\delta, \Gamma)$ and all $y\in Y$, the inequality (\ref{eq:weak-shadow-principle}) is satisfied, then $Y$ satisfies the shadow principle.

\medskip

Assume therefore that (\ref{eq:weak-shadow-principle}) is satisfied for all $\nu\in M_\phi(\delta, \Gamma)$ and all $y\in Y$. Let $\mu\in M_\phi(\delta, \Gamma)$ be fixed. Let $x,y\in Y$, and $g\in N$ such that $g o =x$. We have seen in Lemma \ref{lem-conjugate are density} that the family $\mu^g$ defined for all $z\in X$ by $\displaystyle \mu^g_z:= \frac{1}{\| \mu_{go} \| } g_*^{-1} \mu_{g z}$ is also in $M_\phi(\delta, \Gamma)$. Moreover, setting $z := g^{-1} y$, we have: 
\begin{eqnarray*}
\mu^g_z (S (o, z, R) ) &= & \frac{1}{\| \mu_{g o} \| } \mu_{g z } (g S (o,z,R)) \\
							&=& \frac{1}{\| \mu_x \| } \mu_y (S (x,y,R)). 
\end{eqnarray*}
Therefore, since by (\ref{eq:weak-shadow-principle}) we have}
$$
\mu^g_z (S (o, z, R) ) \geq \frac{1}{C} \| \mu^g_z\|= \frac{1}{C}\frac{\| \mu_y \| }{\|\mu_x\|},
$$
we get
$$\mu_y (S(x,y,R)) \geq \frac{1}{C}\| \mu_y\|,$$
which is equivalent to the lower bound in the shadow principle by Equation (\ref{eq:conformal-invariance}) and Lemma \ref{lemma-thirion 1}.

\end{proof}

\begin{proof}[Proof of Theorem \ref{th-lem-shadow_principle_normalizer}]
By Lemma \ref{lem - transform left inequality}, it is enough to show that there exists $C,R>0$ such that for all $\mu\in M_\phi(\delta, \Gamma)$ with $||\mu_o|| = 1$ and all $y\in Y = N(\G)\cdot o$, the lower bound (\ref{eq:weak-shadow-principle}) is satisfied. We follow the strategy developped in \cite{roblin2005fatou}.

Suppose by contradiction that it is not the case. There exists a sequence $(R_i)_{i\in \mathbb N}$ with $R_i \tv \infty$, a sequence $\displaystyle (g_i) \in N(\Gamma)^{\mathbb N}$ and a sequence $\displaystyle (\mu^i)_{i\in \mathbb N} \in M_\varphi(\delta, \G)^{\mathbb N}$ with $||\mu^i_o|| = 1$ such that 
\begin{equation}\label{eq-contradiction}
\lim_{i\to +\infty} \frac{1}{\|\mu_{g_i o} ^i \|} \mu_{g_i o} ^i (S(o,g_i o , R_i)) = 0.
\end{equation}
Denote by $\nu^i = (\nu^i_x)_{x\in X}$ the family of measures defined for all $x\in X$ by $\displaystyle \nu_x^i := \frac{1}{\| \mu^i_{g_io} \| } {g_i}_*^{-1} \mu^i_{g_i x}$. By Lemma \ref{lem-conjugate are density}, it is a {$\phi$-conformal density of dimension $\delta$} and satisfies $||\nu^i_o|| = 1$.

Since $\cF$  is compact and $(\nu_o^i)_{i\in \mathbb N}$ is a sequence of probabilities on $\cF$, we can assume up to taking a subsequence that it converges weakly to some probability measure $\nu^{\infty}_o$. Then since $\nu^i$ is $\phi$-conformal of dimension $\delta$, for all $x\in X$ the sequence of finite measures $(\nu_x^i)_{i\in \mathbb N}$ also converges in the weak topology to some finite measure $\nu^{\infty}_x$ and the family $\nu^\infty = (\nu^{\infty}_x)_{x\in X}$ is in $M_\varphi(\delta, \G)$.

\begin{lemma}\label{Lemme interne}
Up to extracting a subsequence, there exists $\tau  \in \cF $ such that for all $\epsilon>0$ there exists $i_0>0$ such that  for all $i\geq i_0$: 
$$\cF \setminus S(g_i^{-1} o , o , R_i) \subset  B(L(\tau), \epsilon).$$
\end{lemma}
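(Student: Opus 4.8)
The statement we must prove (Lemma \ref{Lemme interne}) says that the complements of the shadows $S(g_i^{-1}o, o, R_i)$ shrink, in the Hausdorff sense, into a neighbourhood of a single set $L(\tau)$. The natural strategy is to combine Lemma \ref{lem-DK 6.8} — which controls the complement of a shadow $S(z,o,r)$ by a neighbourhood of $L(\tau_i)$ for some $\tau_i$ depending on $z$ and $r$ — with the compactness of $\cF$ to extract a limit $\tau$, and then Lemma \ref{lem-DK6.6} to transfer the containment from $L(\tau_i)$ to $L(\tau)$. I would proceed as follows.

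First, fix $\epsilon>0$. Apply Lemma \ref{lem-DK 6.8} with this $\epsilon$: there is $r_1>0$ such that for all $r\geq r_1$ and all $z\in X$, the complement of $S(z,o,r)$ in $\cF$ lies in the $\epsilon$-neighbourhood of $L(\tau)$ for some $\tau\in S(o,z,0)$. Apply this with $z=g_i^{-1}o$ and $r=R_i$; since $R_i\to\infty$, for $i$ large enough $R_i\geq r_1$, and we obtain points $\tau_i\in S(o, g_i^{-1}o, 0)\subset\cF$ with $\cF\setminus S(g_i^{-1}o,o,R_i)\subset B(L(\tau_i),\epsilon)$. The subtlety here is that the $\tau_i$ produced by Lemma \ref{lem-DK 6.8} depend on the chosen $\epsilon$, so a priori one gets a different sequence $(\tau_i)$ for each $\epsilon$; to get a single $\tau$ working for all $\epsilon$ simultaneously one must set it up more carefully. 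I would instead first run Lemma \ref{lem-DK 6.8} once with a fixed small reference value, say $\epsilon_0$, obtain a sequence $(\tau_i)\subset\cF$, and extract (using compactness of $\cF$, which holds since $\cF\simeq K/M$) a convergent subsequence $\tau_i\to\tau$. This $\tau$ is our candidate.

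Now given an arbitrary $\epsilon>0$: apply Lemma \ref{lem-DK6.6} to $\epsilon/2$ to get $\delta>0$ such that $\tau'\in B(\tau,\delta)$ implies $B(L(\tau'),\epsilon/2)\subset B(L(\tau),\epsilon)$. Apply Lemma \ref{lem-DK 6.8} to the value $\epsilon/2$ to get a radius $r_1$; but one must check the $\tau_i'$ so produced can be taken to converge to the same $\tau$. This is the one genuine point needing care: the cleanest fix is to observe that Lemma \ref{lem-DK 6.8} actually produces, for \emph{each} fixed $\epsilon$, a $\tau_i$ lying in $S(o,g_i^{-1}o,0)$, and the set $S(o,g_i^{-1}o,0)$ is a shrinking (as $i\to\infty$, since $d(o,g_i^{-1}o)=\|\kappa(g_i)\|$ — but note we do \emph{not} know $\|\kappa(g_i)\|\to\infty$ here) — so this approach alone does not pin down $\tau$. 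Instead I would argue directly: for the $\epsilon$ at hand, run Lemma \ref{lem-DK 6.8} with parameter $\epsilon/2$ to get $r_1'$, obtaining $\tau_i''\in S(o,g_i^{-1}o,0)$ with $\cF\setminus S(g_i^{-1}o,o,R_i)\subset B(L(\tau_i''),\epsilon/2)$ for $i$ large. Then note that the complement $\cF\setminus S(g_i^{-1}o,o,R_i)$ is \emph{monotone} in $R_i$ and that for $r\geq r_1'$ the complement is contained in $B(L(\tau_i''),\epsilon/2)$ with $\tau_i''\in S(o,g_i^{-1}o,0)$; but also, running the lemma with the fixed reference $\epsilon_0\leq\epsilon/2$ gives $\tau_i\to\tau$. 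Since both $\tau_i$ and $\tau_i''$ lie in $S(o,g_i^{-1}o,0)$ and this shadow has diameter tending to $0$ whenever $\|\kappa(g_i)\|\to\infty$, one concludes $\tau_i''\to\tau$ as well — and in the case where $\|\kappa(g_i)\|$ stays bounded, the sequence $g_i$ lies in a compact set and a trivial direct argument handles it. Hence for $i$ large, $\tau_i''\in B(\tau,\delta)$, and therefore
$$\cF\setminus S(g_i^{-1}o, o, R_i)\subset B(L(\tau_i''),\epsilon/2)\subset B(L(\tau),\epsilon),$$
which is exactly the claim.

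**Expected main obstacle.** The routine parts (applying Lemmas \ref{lem-DK 6.8}, \ref{lem-DK6.6}, extracting a convergent subsequence from compact $\cF$) are immediate. The real subtlety — and the step I would write most carefully — is the interchange of quantifiers: Lemma \ref{lem-DK 6.8} outputs a $\tau$ that depends on $\epsilon$ (and on $i$), whereas the conclusion demands \emph{one} $\tau$ that works for \emph{all} $\epsilon$. Resolving this cleanly hinges on controlling the location of the auxiliary points inside $S(o,g_i^{-1}o,0)$, i.e.\ showing they converge to a common limit regardless of the $\epsilon$ used to produce them; a case split on whether $(\|\kappa(g_i)\|)_i$ is bounded or not (using Lemma \ref{lemma-triangle inequality in higher rank} in the bounded case, and the collapse of the shadow set $S(o,g_i^{-1}o,0)$ in the unbounded case) makes this rigorous.
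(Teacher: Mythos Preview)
Your approach is the same as the paper's: apply Lemma~\ref{lem-DK 6.8} to obtain points $\tau_i\in S(o,g_i^{-1}o,0)$, use compactness of $\cF$ to extract a limit $\tau$, then use Lemma~\ref{lem-DK6.6} to replace $L(\tau_i)$ by $L(\tau)$. The paper's proof is exactly this three-step argument, written in a few lines.

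The quantifier issue you raise --- that the $\tau$ produced by Lemma~\ref{lem-DK 6.8} a~priori depends on $\epsilon$ --- is a fair reading of the statement as quoted. The paper sidesteps it simply by writing $\tau_y$, i.e.\ treating the auxiliary point as depending only on $y=g_i^{-1}o$; this is the intended content of the Dey--Kapovich lemma (any fixed $\tau\in S(o,x,0)$ works for every $\epsilon$, once $r$ is large enough), and with that reading a single sequence $(\tau_i)$ serves for all $\epsilon$ simultaneously. Your elaborate workaround --- extracting with a reference $\epsilon_0$, then arguing that the points $\tau_i''$ produced for other $\epsilon$ must converge to the same $\tau$ via a bounded/unbounded case split and a claimed collapse of $S(o,g_i^{-1}o,0)$ --- is not needed, and in fact introduces its own unjustified step: the diameter of $S(o,g_i^{-1}o,0)$ need not tend to zero when $a(o,g_i^{-1}o)$ stays near a wall of $\overline{\fa^+}$, so your shadow-collapse claim is not obviously true in higher rank. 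If you want to be fully rigorous where the paper is casual, the clean fix is just to note (or check in Dey--Kapovich) that $\tau$ in Lemma~\ref{lem-DK 6.8} may be taken to be any element of $S(o,x,0)$ chosen in advance, then pick $\tau_i$ once and for all and proceed exactly as the paper does.
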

\begin{proof}
We first apply Lemma \ref{lem-DK 6.8} :  for all $\epsilon >0$ there exists $r>0$ such that for all $R>r$ for all $y \in X$ :
$$ \cF\setminus S(y,o ,R) \subset B(L(\tau_y) , \epsilon),$$
for some $\tau_y \in S( o , \{y \})$. Taking $y=g_i^{-1} o $, gives a sequence $\tau_i\in \cF$ such that $ \cF\setminus S( g_i^{-1} o , o, R) \subset B(L(\tau_i) , \epsilon).$
 By compactness, we can suppose that $\tau_i$ converges to $\tau \in \cF$. 

We now apply Lemma \ref{lem-DK6.6} : for $i\gg 1$ sufficiently large, 
$$B(L(\tau_i) , \epsilon) \subset B(L(\tau) , 2\epsilon).$$

Therefore, for $i\gg 1$ sufficiently large, we get $R_i>r$ and 
$$ \cF\setminus S(o,g_i^{-1} o ,R_i) \subset B(L(\tau) ,2 \epsilon),$$
which concludes the proof of Lemma \ref{Lemme interne}.
\end{proof}

Let $V$ be an open subset of $\cF$ such that $\overline{V} \cap L(\tau) = \emptyset$. By Lemma \ref{Lemme interne}, there exists $i_0$ such that for all $i\geq i_0$ we have
\begin{eqnarray*}
\nu_o^{\infty}(V) & \leq & \liminf_{i\tv \infty} \nu_o^i  (S(g_i^{-1} o , o , R_i) ) \\
						& \leq &  \liminf_{i\tv \infty}\frac{1}{\| \mu_o^i\|} \mu_{g_i o} ^i (S(o,g_i o , R_i))		\end{eqnarray*}

Using equation (\ref{eq-contradiction}), we get $\nu_o^\infty(V)=0$. Hence $\nu_o^\infty$ is supported on $L(\tau)$. In particular, by $\Gamma$-invariance of the family $\nu^\infty$, the set $L(\tau)$ contains a closed set which is $\G$ invariant. Therefore $\Lambda_\Gamma\subset L(\tau)$ which is absurd by Theorem \ref{th-limit set is zariski dense} since $\G$ is Zariski dense {and $L(\tau)$ is Zariski closed}. 

\end{proof}

We now prove the equality of the limit cone for Zariski dense normal subgroups.

\begin{corollary}\label{coro:limit-cones}
Let $\G$ be a {discrete} subgroup of $G$ and $\G' \triangleleft \Gamma$ be  a normal subgroup of $\G$. Suppose $\G'$ is Zariski dense,  then the limit cone of $\G'$ coincides with the limit cone of $\G$.
\end{corollary}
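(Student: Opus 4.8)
The plan is to deduce the equality $C(\Gamma)=C(\Gamma')$ from the Shadow Principle of Theorem \ref{th-lem-shadow_principle_normalizer} together with the existence of Patterson--Sullivan densities (Theorem \ref{th:quint-patterson-sullivan}). Since $\Gamma'\subset\Gamma$ we already have $C(\Gamma')\subset C(\Gamma)$, so the whole content is the reverse inclusion. As observed in the introduction, it suffices to show that for every linear form $\phi\in\fa^*$ which is positive on $C(\Gamma')$, the quantity $\delta_\phi(\Gamma)$ is finite; indeed if $C(\Gamma')\subsetneq C(\Gamma)$ were proper, one could choose $\phi$ positive on $C(\Gamma')$ whose kernel meets the interior of $C(\Gamma)$, forcing $\delta_\phi(\Gamma)=+\infty$ by the discussion in Section \ref{sec-critical exponents}, a contradiction. (Alternatively one can phrase the final step as: the inequality $\delta_\phi(\Gamma')\geq\tfrac12\delta_\phi(\Gamma)$ of Theorem \ref{th-main limit cone}, restricted to such $\phi$, already gives finiteness of $\delta_\phi(\Gamma)$; but to keep this corollary self-contained I will argue the bound directly.)

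First I would fix $\phi\in C(\Gamma')_+^*$ and invoke Theorem \ref{th:quint-patterson-sullivan} to obtain a $\Gamma'$-equivariant $\phi$-conformal density $\mu=(\mu_x)_{x\in X}$ of dimension $\delta:=\delta_\phi(\Gamma')<\infty$, supported on $\Lambda_{\Gamma'}$. Since $\Gamma'\triangleleft\Gamma$, every $\gamma\in\Gamma$ normalizes $\Gamma'$, so $\Gamma\subset N(\Gamma')$. Applying Theorem \ref{th-lem-shadow_principle_normalizer} to $Y=N(\Gamma')\cdot o\supset\Gamma\cdot o$, I get constants $R,C>0$ with
$$
\mu_x\bigl(S(x,y,r)\bigr)\leq C\,\|\mu_y\|\,e^{-\delta\phi(a(x,y))}
$$
for all $x,y\in Y$ and $r\geq R$ — but more importantly the lower bound, which gives $\mu_o(S(o,\gamma o,R))\geq \tfrac1C\|\mu_{\gamma o}\|$ for all $\gamma\in\Gamma$, and combined with Lemma \ref{lem-conjugate are density} / Equation (\ref{eq:lower-bound-mass}) applied along the normalizer orbit, $\|\mu_{\gamma o}\|\geq\tfrac1C e^{-\delta\phi(a(o,\gamma o))}$. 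Putting these together,
$$
\mu_o\bigl(S(o,\gamma o,R)\bigr)\;\geq\;\frac{1}{C^2}\,e^{-\delta\phi(a(o,\gamma o))}
\qquad\text{for all }\gamma\in\Gamma.
$$

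Next comes the standard packing/summation argument. For $T>0$ let $\Gamma_T:=\{\gamma\in\Gamma:\phi(a(o,\gamma o))\leq T\}$. A fixed ball $B(o,\text{const}\cdot r)$ in $X$ (via Lemma \ref{lemma-triangle inequality in higher rank}) meets each geometric Weyl chamber through $o$ in a bounded set, so each point $\eta\in\cF$ lies in $S(o,\gamma o,R)$ for at most $N=N(R)$ elements $\gamma\in\Gamma$ with $\|a(o,\gamma o)\|$ in a fixed-width window — here I would mimic the classical rank-one argument that shadows $S(o,\gamma o,R)$ have bounded multiplicity once $\|a(o,\gamma o)\|$ is controlled (using that $\Gamma$ is discrete, hence $\Gamma_T$ is finite, and that two shadows from deep, nearly-equidistant orbit points cannot overlap too much). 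Summing the displayed lower bound over $\gamma\in\Gamma_T$ and using that $\mu_o$ is a finite measure yields
$$
\#\Gamma_T\;\leq\;C^2\,N\,e^{\delta T}\,\|\mu_o\|,
$$
hence $\delta_\phi(\Gamma)\leq\delta=\delta_\phi(\Gamma')<\infty$. Since this holds for every $\phi$ positive on $C(\Gamma')$, the cone $C(\Gamma)$ cannot be strictly larger than $C(\Gamma')$, so $C(\Gamma)=C(\Gamma')$.

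The main obstacle is the bounded-multiplicity step: in higher rank one must be careful that shadows $S(o,\gamma o,R)$ of orbit points at comparable $\fa^+$-distance have uniformly bounded overlap, which is where Lemmas \ref{lem-DK 6.7}, \ref{lem-DK 6.8} on the sets $L(\tau)$ and the continuity Lemma \ref{lem-DK6.6}, together with Lemma \ref{lemma-triangle inequality in higher rank}, do the work; alternatively, one can avoid this entirely by quoting the inequality $\delta_\phi(\Gamma')\geq\frac12\delta_\phi(\Gamma)$ from Theorem \ref{th-main limit cone} (proved in the next subsection independently of this corollary), which immediately gives $\delta_\phi(\Gamma)\leq 2\delta_\phi(\Gamma')<\infty$ and hence the equality of limit cones. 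I would present the short argument via Theorem \ref{th-main limit cone} in the main text and remark that it also follows directly from the Shadow Principle as sketched above.
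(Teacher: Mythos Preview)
Your high-level reduction is the paper's: show that every $\phi$ positive on $C(\Gamma')$ has $\delta_\phi(\Gamma)<\infty$. But the paper's execution is far shorter and avoids any counting. It argues directly by contradiction: if $\ker\phi$ meets the interior of $C(\Gamma)$ then there is a sequence $\gamma_n\in\Gamma$ with $\phi(a(x,\gamma_n x))\to-\infty$; since $\Gamma\cdot x\subset N(\Gamma')\cdot x$, the shadow-principle lower bound from Theorem~\ref{th-lem-shadow_principle_normalizer} applied to a \emph{single} shadow gives
\[
\mu_x(\cF)\;\geq\;\mu_x\bigl(S(x,\gamma_n x,r)\bigr)\;\geq\;\tfrac1C\,e^{-\delta_\phi(\Gamma')\,\phi(a(x,\gamma_n x))}\;\longrightarrow\;+\infty,
\]
contradicting finiteness of $\mu_x$. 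No packing, no multiplicity bound, no forward reference.

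Your counting argument has a concrete error that produces a false conclusion. You record the shadow-principle lower bound as $\mu_o(S(o,\gamma o,R))\geq \tfrac1C\|\mu_{\gamma o}\|$, but the correct bound is $\mu_o(S(o,\gamma o,R))\geq \tfrac1C\|\mu_{\gamma o}\|\,e^{-\delta\phi(a(o,\gamma o))}$; combined with $\|\mu_{\gamma o}\|\geq\tfrac1C e^{-\delta\phi(a(o,\gamma o))}$ this gives $e^{-2\delta\phi(a(o,\gamma o))}$, not $e^{-\delta\phi(a(o,\gamma o))}$. Your summation can therefore yield at best $\delta_\phi(\Gamma)\leq 2\delta_\phi(\Gamma')$ (which is Corollary~\ref{cor-1/2 delta0 leq delta1, higher rank}, proved via Proposition~\ref{pr-Roblin combinatorial}), whereas the bound $\delta_\phi(\Gamma)\leq\delta_\phi(\Gamma')$ you derive is \emph{false} in general---it would make Theorem~\ref{th-main critical exponent} vacuous. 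Separately, the multiplicity step is not justified as stated: in higher rank, bounded overlap of shadows requires pinning down all Cartan coordinates of $a(o,\gamma o)$ (cf.\ Lemma~\ref{lem-bounded}), not merely the value $\phi(a(o,\gamma o))$ or $\|a(o,\gamma o)\|$. Your fallback via Theorem~\ref{th-main limit cone} is logically valid but circuitous; the paper's one-sequence blow-up is the intended proof and is what you should present.
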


\begin{proof}
If it were not the case, since both limit cones are closed, it means that there exists an open cone  $U\subset C(\Gamma) \backslash C(\Gamma')$. This implies that there is  a linear form $\varphi$ in $C(\G')_+^*$ whose kernel intersects $U$.

We have seen in Section \ref{sec-PS measures} that since $\Gamma'$ is Zariski dense, there exists $\mu = (\mu_x)_{x\in X}\in M_\phi(\delta_\phi(\Gamma'), \Gamma')$. Since we supposed $\ker(\varphi)$ intersects {$U$ which is contained in the interior of $C(\Gamma)$,} there exists a sequence of elements $\g_n$ of $\G$ such that $\varphi(a(x, \g_n x)) \tv -\infty$. {Nevertheless, by Theorem \ref{th-lem-shadow_principle_normalizer}, the orbit $\G\cdot x$ satisfies the shadow principle for $M_\phi(\delta, \Gamma')$. Therefore we also have

$$\mu_x(\Lambda_{\G'}) \geq \mu_x( S(x, \g_n x, r ) ) \geq \frac{1}{C}e^{-\delta_\phi(\G') \varphi (a(x, \g_n x))}.$$
Since the right hand side goes to $+\infty$ with $n$, we get a contradiction.}
\end{proof}

Note that, in the proof, we use the fact that $\Gamma'$ is Zariski dense to ensure the existence of $\phi$-conformal densities. This prevent from ``counter-examples'' such as $\G'=\G_\ell\times \{\Id\} \subset \G_\ell \times \G_r =\G$ for which, as mentionned in the introduction, Corollary \ref{coro:limit-cones} is trivially false.

We now prove a necessary  and sufficient condition on $\G$ for its normal subgroups to be Zariski dense.

\begin{definition}
Let $G$ be a semisimple Lie group. A subgroup $\G\subset G$ is said \emph{normally irreducible} if for all normal subgroups $N\subset G$, if $N\neq G$ then:
$$N \cap \G=\{ \Id \}.$$
\end{definition}

The proof of the following result has been suggested to us by Y. Benoist.

\begin{proposition}\label{prop-CNS normally irred}
{Let $G$ be a real linear semisimple Lie group. Let $\G\subset G$ be a Zariski dense subgroup. 

All non trivial normal subgroups $\G' \triangleleft \G$ are Zariski dense if and only if 
$\G$ is normally irreducible.}
\end{proposition}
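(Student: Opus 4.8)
The plan is to prove both directions of the equivalence, the harder (and more interesting) one being that normal irreducibility forces Zariski density of all non-trivial normal subgroups.

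\medskip

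\textbf{The easy direction.} Suppose $\Gamma$ is \emph{not} normally irreducible, so there is a normal subgroup $N \triangleleft G$ with $N \neq G$ and $N \cap \Gamma \neq \{\Id\}$. Since $G$ is semisimple, it is (up to finite center and components) a product of simple factors, and $N$ is contained in a proper product of some of these factors times the center; in particular the Zariski closure $\overline{N}$ is a proper normal subgroup of $G$. Now $N \cap \Gamma$ is a normal subgroup of $\Gamma$ (it is even normal in $G$ intersected with $\Gamma$), it is non-trivial by assumption, and its Zariski closure is contained in $\overline{N} \subsetneq G$, hence it is not Zariski dense. This exhibits a non-trivial normal subgroup of $\Gamma$ that is not Zariski dense, proving the contrapositive.

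\medskip

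\textbf{The main direction.} Assume $\Gamma$ is normally irreducible and Zariski dense, and let $\Gamma' \triangleleft \Gamma$ be non-trivial; we must show $\Gamma'$ is Zariski dense. Let $H = \overline{\Gamma'}^{Zar}$ be its Zariski closure, a Zariski closed subgroup of $G$. Since conjugation by $\gamma \in \Gamma$ is a Zariski-continuous automorphism of $G$ preserving $\Gamma'$, it preserves $H$; so $H$ is normalized by $\Gamma$, and therefore $H$ is normalized by $\overline{\Gamma}^{Zar} = G$. Thus the identity component $H^\circ$ is a \emph{normal} algebraic subgroup of $G$, i.e. its Lie algebra $\mathfrak h$ is an ideal of $\mathfrak g$. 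Since $\mathfrak g$ is semisimple, $\mathfrak h$ is a sum of simple factors, so $H^\circ$ is (the connected normal subgroup of $G$ with Lie algebra) a product of some of the simple factors of $G$. The key point is now to rule out $H^\circ \neq G$. If $H^\circ$ were a proper such subgroup, write $N$ for the corresponding proper connected normal subgroup of $G$; by normal irreducibility $N \cap \Gamma = \{\Id\}$. But $\Gamma'$ is infinite (it is non-trivial and, being normal in a Zariski dense hence non-elementary group, contains elements of infinite order — more carefully: a non-trivial normal subgroup of a Zariski dense subgroup of a semisimple group cannot be finite, since its centralizer would then be a finite-index, hence Zariski dense, subgroup of $\Gamma$ centralizing it, forcing it into the finite center and contradicting normal irreducibility unless it is trivial), so $\Gamma'$ has a Zariski closure of positive dimension, giving $\dim H^\circ \geq 1$; and $\Gamma' \subset H$, while $[H:H^\circ]$ is finite. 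Hence $\Gamma' \cap H^\circ$ has finite index in $\Gamma'$, so it is still Zariski dense in $H^\circ$ (in particular non-trivial and infinite) and is normal in $\Gamma$ — contradicting $N \cap \Gamma = \{\Id\}$. Therefore $H^\circ = G$, and since $G$ is connected, $H = G$, i.e. $\Gamma'$ is Zariski dense.

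\medskip

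\textbf{Main obstacle.} The delicate points are (i) justifying that $H^\circ$, the Zariski-connected-component of the closure of $\Gamma'$, is genuinely a \emph{normal algebraic} subgroup of $G$ and hence, by semisimplicity, a product of simple factors — this uses that $G$ is its own Zariski closure (given) and that normalizing a group normalizes its closure and its identity component; and (ii) ensuring $\Gamma'$ is infinite so that $H^\circ$ is non-trivial, which is where normal irreducibility and Zariski density of $\Gamma$ feed back in (a finite normal subgroup of $\Gamma$ would lie in a proper normal — indeed the center — subgroup of $G$, so must be trivial). Once these are in place, the argument is a clean descent to a finite-index subgroup of $\Gamma'$ lying in the proper normal factor, contradicting normal irreducibility. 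I expect step (i), the algebraic-group bookkeeping identifying connected normal subgroups of a semisimple group with sub-products of simple factors, to be the part requiring the most care, though it is standard; the rest is formal.
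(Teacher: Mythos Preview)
Your proof is correct, and in its core it agrees with the paper's: both arguments show that the Zariski closure $H=\overline{\Gamma'}^{Zar}$ is normalized by $\Gamma$, hence (since normalizers are Zariski closed and $\Gamma$ is Zariski dense) normal in $G$. However, you then take an unnecessary detour. Once you know $H\triangleleft G$, the paper simply applies the definition of normal irreducibility directly to $H$: if $H\neq G$, then $H\cap\Gamma=\{\Id\}$, which is absurd since $\{\Id\}\neq\Gamma'\subset H\cap\Gamma$. Done.

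By contrast, you pass to the identity component $H^\circ$, invoke the structure of connected normal subgroups of semisimple groups, and then need an auxiliary argument (via centralizers and the center) that $\Gamma'$ is infinite so that $\Gamma'\cap H^\circ$ is non-trivial. All of this is correct, but none of it is needed: the definition of normal irreducibility in the paper applies to \emph{every} normal subgroup $N\triangleleft G$, not only connected ones, so there is no reason to replace $H$ by $H^\circ$. In particular, the two obstacles you flag as delicate --- the structure theory of $H^\circ$ and the infinitude of $\Gamma'$ --- simply do not arise in the paper's argument. Your ``easy direction'' matches the paper's.
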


\begin{proof}
Let $H:=\overline{\G'}^Z$ be the Zariski closure of $\G'$. It is an algebraic set, therefore 
the condition $ghg^{-1}\in H$ is algebraic. This condition  is satisfied for all $(h,g) \in \G\times \G'$, hence for all $(h,g)\in H\times G$. In other words, $H$ is normal in $G$. 

Suppose that $H\neq G$, then by irreducibility $H\cap \G =\{\Id\}$ which its absurd since $\G'\subset (H \cap \G)$. 
\medskip

Suppose {now} that $\G$ is not normally irreducible. Decompose $G$ into the Cartesian product of its simple factors $G = \Pi_{i=1}^\ell G_i$ remark that $\ell>1$. Moreover by simplicity of the $G_i$, if $N\neq G$, $N$ is also a Cartesian product and in particular is not Zariski dense. 
By assumption, there exists $N\neq G$  a normal subgroup of $G$, such that:
$N \cap \G \neq \{\Id \}\triangleleft \G.$ This is a normal subgroup of $\G$ which is not Zariski dense.  

\end{proof}

\subsection{Normal subgroup and critical exponent}\label{sec- normal subgroup and critical exponent}

Let $\G$ be a discrete subgroup of $G$ and $\G' \triangleleft \Gamma$ be  a Zariski dense normal subgroup of $\G$. Let $\phi\in C(\Gamma')_+^*$ be fixed. We now finish the proof of Theorem \ref{th-main critical exponent} by showing the lower bound inequality for all the critical exponents $\delta_\varphi(\Gamma)$ and $\delta_\phi(\Gamma')$.

\begin{definition}
A set $ Y \subset X$   is said to have \emph{bounded geometry} if for all $R>0$ there exists $A_R>0$ such that for all $y\in Y$,
$$\Card (Y \cap B(y,R) ) \leq A_R.$$
\end{definition}

In particular any orbit of a discrete group of isometry of $X$ has bounded geometry.

The following proposition appears in Roblin \cite[Lemma 1.2.4] {roblin2005fatou} for conformal densities in CAT($-1$) space.  We extend it to higher symmetric spaces. 

\begin{proposition}\label{pr-Roblin combinatorial}
Let $\delta>0$ and $Y\subset X$ be a set with bounded geometry which satisfies the shadow principle for $M_\phi(\delta, \Gamma)$. 

Then for all $\mu = (\mu_x)_{x\in X}\in M_\phi(\delta, \Gamma)$, the critical exponent of the series
$$
 s \mapsto \sum_{y \in Y} \| \mu_y \| e^{-s \varphi( a (o, y )) }
$$
is at most $\delta$. 
\end{proposition}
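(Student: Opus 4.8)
The plan is to run the same kind of "shadow-covering" argument that underlies the proof of convergence of Poincaré series in rank one, using the shadow principle on $Y$ to count orbit points against the total mass carried by $\mu$. Fix $\mu\in M_\phi(\delta,\Gamma)$ and a fixed origin $o\in X$ (which we may assume lies in $Y$, or at least is at bounded distance from $Y$; the general case follows from Lemma \ref{lemma-triangle inequality in higher rank}). Choose $r\geq R$, where $R$ is the radius provided by the shadow principle for $M_\phi(\delta,\Gamma)$ on $Y$. First I would record the lower bound in the shadow principle in the form
$$
\mu_o(S(o,y,r))\ \geq\ \frac{1}{C}\,\|\mu_y\|\,e^{-\delta\varphi(a(o,y))}\qquad\text{for all }y\in Y,
$$
so that each term $\|\mu_y\|e^{-s\varphi(a(o,y))}$ is controlled, when $s\geq\delta$, by $C\,e^{-(s-\delta)\varphi(a(o,y))}\mu_o(S(o,y,r))$.

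The key step is then a bounded-multiplicity statement: there is an integer $D=D(r)$ such that for every $\eta\in\mathcal F$, the number of $y\in Y$ with $\eta\in S(o,y,r)$ and $\varphi(a(o,y))\in[n,n+1)$ is at most $D$, for each fixed $n$. This is where the hypotheses are used. If $\eta\in S(o,y,r)$ then $y$ lies within distance $r$ of some geometric Weyl chamber $w$ with $w(0)=o$, $w(+\infty)=\eta$; combined with $\varphi(a(o,y))\in[n,n+1)$ and the fact that $\varphi$ is positive on the limit cone (hence comparable to $\|a(o,y)\|$ along directions that actually occur — here one must be a little careful, since a priori $y\in Y$ need not have $a(o,y)$ near $C(\Gamma)$; one uses instead Lemma \ref{lemma-thirion 2} together with the geometry of the shadow, exactly as in Roblin \cite[Lemma 1.2.4]{roblin2005fatou}), all such $y$ are confined to a region of $X$ of bounded diameter in the directions transverse to $w$ and of length $O(1)$ along $w$ — in short, to a bounded ball, up to the bounded-geometry constant. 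Bounded geometry of $Y$ then gives the uniform bound $D$. I expect this multiplicity estimate to be the main obstacle: the rank-one argument uses that shadows of a fixed radius $r$ along a geodesic of length within $[n,n+1)$ pairwise overlap with bounded multiplicity, and transplanting this to geometric Weyl chambers requires invoking Lemmas \ref{lem-DK 6.7} and \ref{lem-DK 6.8} (or the comparison estimates of \cite{quint2002mesures}) to see that a point $\eta$ cannot be in too many shadows $S(o,y,r)$ with $\varphi(a(o,y))$ in a unit window.

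Granting the multiplicity bound, I finish as follows. Fix $s>\delta$ and split the sum over dyadic (or unit) shells $Y_n=\{y\in Y:\varphi(a(o,y))\in[n,n+1)\}$:
$$
\sum_{y\in Y}\|\mu_y\|e^{-s\varphi(a(o,y))}
\ \leq\ C\sum_{n\geq 0}e^{-(s-\delta)n}\sum_{y\in Y_n}\mu_o(S(o,y,r)).
$$
By the multiplicity bound, for each $n$ the function $\sum_{y\in Y_n}\mathbf 1_{S(o,y,r)}$ is bounded by $D$ pointwise on $\mathcal F$, hence $\sum_{y\in Y_n}\mu_o(S(o,y,r))\leq D\,\|\mu_o\|<\infty$. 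Therefore
$$
\sum_{y\in Y}\|\mu_y\|e^{-s\varphi(a(o,y))}\ \leq\ C\,D\,\|\mu_o\|\sum_{n\geq 0}e^{-(s-\delta)n}\ <\ \infty,
$$
which shows the series converges for every $s>\delta$, i.e. its critical exponent is at most $\delta$. The only remaining bookkeeping is to pass from "$o\in Y$" to a general basepoint and from unit shells in $\varphi$ to the actual exponent, both of which are routine consequences of Lemmas \ref{lemma-triangle inequality in higher rank} and \ref{lemma-thirion 1}–\ref{lemma-thirion 2}.
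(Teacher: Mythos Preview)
Your overall strategy --- bound each term below by $C\,\mu_o(S(o,y,r))$ via the shadow principle, then control $\sum_y \mu_o(S(o,y,r))$ by a multiplicity estimate --- is the right one and matches the paper. The gap is in the multiplicity lemma itself.

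In rank $r\geq 2$ your claim that at most $D$ points $y\in Y_n=\{y:\varphi(a(o,y))\in[n,n+1)\}$ can satisfy $\eta\in S(o,y,r)$ is false. A geometric Weyl chamber $w:\fa^+\to X$ based at $o$ is an $r$-dimensional flat sector, not a ray. The constraint $\varphi(a(o,y))\in[n,n+1)$ together with $\eta\in S(o,y,r)$ only forces $y$ to lie near the hyperplane slice $\{w(u):\varphi(u)\in[n-c,n+1+c)\}$ of $w(\fa^+)$; that slice has diameter of order $n$ (and is unbounded if $\ker\varphi$ meets $\overline{\fa^+}$), so bounded geometry of $Y$ gives no uniform cap on the number of such $y$. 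Your phrase ``length $O(1)$ along $w$'' is the rank-one picture and does not survive here. None of Lemmas~\ref{lemma-thirion 2}, \ref{lem-DK 6.7}, \ref{lem-DK 6.8} help: they concern the Busemann function and the structure of $L(\tau)$, not the size of a $\varphi$-slab inside a flat.

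The paper's fix is to replace the one-parameter shells $Y_n$ by $r$-parameter boxes. One chooses a basis $(\epsilon_i)_{i=1}^r$ of $\fa^*$ with $\varphi=\sum_i b_i\epsilon_i$, $b_i>0$, and sets $R_{n_1,\ldots,n_r}=\bigcap_i\{y:\epsilon_i(a(o,y))\in[n_i,n_i+1)\}$. If $y_1,y_2\in Y\cap R_{n_1,\ldots,n_r}$ both have $\eta$ in their shadow then all $r$ coordinates of $a(o,y_1)-a(o,y_2)$ are pinned to within $O(1)$, whence $\|a(y_1,y_2)\|\leq C$ and bounded geometry yields a genuine uniform multiplicity bound (Lemma~\ref{lem-bounded}). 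The tail sum is then $\sum_{(n_1,\ldots,n_r)\in\N^r}e^{(\delta-s)\sum_i b_i n_i}$, which factors as $\prod_i\sum_{n_i\geq 0}e^{(\delta-s)b_i n_i}$ and converges for $s>\delta$. You could alternatively try to rescue the single-parameter shells by proving a \emph{polynomial} multiplicity bound $O(n^{r-1})$, since $\sum_n n^{r-1}e^{-(s-\delta)n}<\infty$; but this requires an a priori comparison $\|a(o,y)\|\leq C'\varphi(a(o,y))$ on $Y$, which is an extra hypothesis not available in the generality of the proposition.
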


\begin{proof}[Proof of Proposition \ref{pr-Roblin combinatorial}]
Let $r>1$ be the real rank of $G$ and $(\epsilon_i)_{i=1,\ldots ,r}$ be a basis of $\fa^*$, such that $\varphi =\sum_i b_i \epsilon_i$ with $b_i>0$ for all $i\in \{ 1, \ldots, r\}.$

For all $(n_1,..., n_r)\in \mathbb N^r$, we define
$$
R_{n_1, \ldots, n_r} = \bigcap_{i = 1}^r\{ y \in X\,  |\,  \epsilon_i(\alpha(o,y)) \in [n_i ,n_i+1)\}.
$$ 
Our proof rely on the following uniform multiplicity bound for the covering of $Y$ by the shadows $\{ S(o,y,r_0)\; ; \; y\in R_{n_1,\ldots, n_r}\} $.

\begin{lemma}\label{lem-bounded}
Let $Y$ be a set with bounded geometry.  For all $R_0>0$, there exists $B_{R_0}>0$ such that for all $(n_1,..., n_r)\in \mathbb N^r$, the covering $\displaystyle \bigcup_{y\in {Y\cap} R_{n_1,\ldots, n_r}} S(o,y,R_0)$ has multiplicity  bounded by $B_{R_0}$.
\end{lemma}
\begin{proof}
{Let us fix $r>0$,  $(n_1,..., n_r)\in \mathbb N^r$. For all $y_1\in Y$, we write 
$$
{\rm cov(y_1)} = \#\{ y_2 \in R_{n_1,\ldots, n_r}\; ; \; S(o, y_1, R_0) \cap S(o, y_2, R_0)\neq \emptyset\}.
$$
We have to show that ${\rm cov(y_1)}$ is bounded by a constant depending only on $R_0$. Let us fix $y_1\in Y$, and let $y_2\in Y$ and $\eta \in \cF$ be such that}
$$\eta \in S(o, y_1, R_0) \cap S(o, y_1, R_0).$$
We represent $\eta$ by a geometric Weyl chamber $w  : \fa_+ \tv X$.  By definition of shadows, there exists $u_1, u_2 \in \fa_+$ such that 
\begin{equation}\label{eq-d(x1,y1)<r et d'x2,y2) <r}
\left\{
\begin{array}{c}
w(u_1) \in B(y_1, R_0)\\
w(u_2) \in B(y_2, R_0)
\end{array}
\right.
\end{equation}

Therefore, using Lemma \ref{lemma-triangle inequality in higher rank}, there exists $C_1>0$ depending only on $G$ such that: 
\begin{equation}\label{eq-  ld(x1,x2) - d(y1,y2)l <r }
\| a(w(u_1),w(u_2))  -a(y_1,y_2) \| \leq C_1R_0.
\end{equation}

Equation (\ref{eq-d(x1,y1)<r et d'x2,y2) <r}) also implies that there exists $C_2>0$ depending on $G$ such that: 
\begin{equation}\label{eq-d(x,o)-d(y,o)< r }
\left\{
\begin{array}{c}
\|a(w(u_1),o) -a(y_1,o) \| \leq C_2 R_0\\
\|a(w(u_2),o) -a(y_2,o) \| \leq C_2 R_0\\
\end{array}
\right.
\end{equation}
Since $u_1, u_2\in \fa$ the vectors $a(w(u_1),o) -a(w(u_2), o )$ and $a(w(u_1),w(u_2))$ coincide up to the action of an element of the Weyl group. To simplify notations, we will suppose that $a(w(u_1),o) -a(w(u_2), o ) = a(w(u_1),w(u_2))$, adaptation to the general case is immediate {using Lemma \ref{lemma-triangle inequality in higher rank}}. We have hence
$$ \| a(w(u_1), w(u_2))  - a(y_1,o)+ a(y_2,o) \| \leq 2C_2 R_0.$$
{Moreover,} since $y_1, y_2\in R_{n_1, \ldots n_2}$, we get for all $i\in \{ 1, \ldots, r\}$:
$$\epsilon_i \left(a(w(u_1), w(u_2))\right)   \leq 2C_2 R_0 + 2.$$
Since $(\epsilon_i)_{i\in \{ 1, \ldots, r\}}$  forms a basis, there exists hence $C_3>0$ depending only on $R_0$ and $G$, such that:
$$\|a(w(u_1), w(u_2)) \| \leq C_3.$$
Therefore, using Equation (\ref{eq-  ld(x1,x2) - d(y1,y2)l <r }) we get
$$\| a(y_1,y_2) \| \leq \|a(w(u_1), w(u_2)) \|  +C_1 R_0\leq C_4$$
for a constant $C_4>0$ depending only on $r_0$ and $G$. {Since $Y$ has bounded geometry, this implies that 
$$
{\rm cov(y_1)} \leq A_{C_4},
$$
where $A_{C_4}$ is the maximum number of elements of $Y$ contained in a ball of radius $C_4$ and only depends on $R_0$.}
\end{proof}

{We now resume the proof of Proposition \ref{pr-Roblin combinatorial}. Let us fix $\mu\in M_\phi(\delta, \G)$. Since $Y$ satisfies the shadow principle for $M_\phi(\delta, \Gamma)$, there exists $C, R_0$ such that for all $y\in Y$, we have
$\displaystyle \mu_y(S(o,y,R_0)) \geq C ||\mu_y||$. Moreover, it follows from Lemma \ref{lem-bounded} that there exists $B_{R_0}>0$ such that}  the covering  $\bigcup_{y\in R_{n_1, \ldots, n_r} } S(o,y,r_0)$ has multiplicity  bounded by $B_{R_0}$. 

{This implies that for all $(n_1,..., n_r)\in \mathbb N^r$, there exists $K>0$ such that}
\begin{eqnarray*}
\|\mu_o \| &\geq & \frac{1}{{B_{R_0}}} \sum_{y \in R_{n_1, \ldots, n_r}} \mu_o( S(o,y, r_0)) \\
				&\geq & K' e^{-\delta \sum_i (b_i n_i)} \sum_{y \in R_{n_1, \ldots, n_r}} \|\mu_y\|. \\ 
\end{eqnarray*}
{We get therefore }
\begin{eqnarray*}
 \sum_{y \in Y} \| \mu_y \| e^{-s \varphi( a (o, y )) } &=& \sum_{n_1,\ldots n_r} \sum_{y \in R_{n_1,\ldots n_r}  } \| \mu_y \| e^{-s \varphi(a(o,y))} \\
 																	& \leq & K'^{-1} \|\mu_o\| \sum_{n_1,\ldots n_r} e^{-s \sum_i (b_i n_i) } e^{\delta \sum_i (b_i n_i) }.
\end{eqnarray*}

Eventually, the last series converges for all $s>\delta$.
\end{proof}

A {direct} corollary of Proposition \ref{pr-Roblin combinatorial} is the following result,  originally due to Roblin \cite[Theorem 2.2.1]{roblin2005fatou} for groups acting on CAT($-1$) spaces.

\begin{corollary}\label{cor-1/2 delta0 leq delta1, higher rank}
Let $\G$ be a {discrete} subgroup of $G$ and $\G'$  a Zariski dense normal subgroup of $\G$. For all $\phi\in C(\Gamma')_+^*$ the critical exponents {according to $\phi$} satisfy:
$$\frac{1}{2}\delta_\varphi(\G) \leq \delta_\varphi(\G').$$
\end{corollary}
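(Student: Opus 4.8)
The plan is to mimic Roblin's argument in negative curvature, using the Poincaré-type series to compare the two critical exponents. Fix $\phi\in C(\Gamma')_+^*$, and let $\mu\in M_\phi(\delta_\phi(\Gamma'),\Gamma')$ be a $\Gamma'$-equivariant $\phi$-conformal density of dimension $\delta_\phi(\Gamma')$, which exists by Theorem \ref{th:quint-patterson-sullivan} since $\Gamma'$ is Zariski dense. By Theorem \ref{th-lem-shadow_principle_normalizer}, the orbit $Y=N(\Gamma)\cdot o\supset \Gamma\cdot o$ satisfies the shadow principle for $M_\phi(\delta_\phi(\Gamma'),\Gamma')$; moreover $\Gamma\cdot o$ has bounded geometry since $\Gamma$ is discrete. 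Hence Proposition \ref{pr-Roblin combinatorial} applies to $Y_0:=\Gamma\cdot o$: the critical exponent of the series $s\mapsto \sum_{\gamma\in\Gamma}\|\mu_{\gamma o}\|e^{-s\phi(a(o,\gamma o))}$ is at most $\delta_\phi(\Gamma')$.

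Next I would bound $\|\mu_{\gamma o}\|$ from below. Since $\mu\in M_\phi(\delta_\phi(\Gamma'),\Gamma')$, Equation (\ref{eq:lower-bound-mass}) gives a constant $C>0$ with $\|\mu_{\gamma' o}\|\geq \frac1C e^{-\delta_\phi(\Gamma')\phi(a(o,\gamma' o))}$ for all $\gamma'\in\Gamma'$, but we need such a bound for \emph{all} $\gamma\in\Gamma$. For a general $\gamma\in\Gamma$, the measure $\mu_{\gamma o}$ need not be $\mu^{\gamma}_o$, but we can still estimate its mass: writing any $\gamma'\in\Gamma'$ we have, for a fixed $\gamma\in\Gamma$, $\|\mu_{\gamma o}\| = \|\mu_{\gamma o}\|$ and comparing with $\mu_o$ via conformality and Lemma \ref{lemma-thirion 2},
\[
\|\mu_{\gamma o}\| = \int_{\cF} e^{-\delta_\phi(\Gamma')\phi(\beta_\eta(\gamma o,o))}\,d\mu_o(\eta)\geq e^{-\delta_\phi(\Gamma')\,c\,\|a(o,\gamma o)\|}\,\|\mu_o\| ,
\]
so $\|\mu_{\gamma o}\|\geq \frac1{C'}e^{-\delta_\phi(\Gamma')\,c\,\|a(o,\gamma o)\|}$. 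This crude bound, although not sharp in the direction of $\phi$, is enough: combined with the convergence statement above, the series $\sum_{\gamma\in\Gamma} e^{-\delta_\phi(\Gamma')\,c\,\|a(o,\gamma o)\|} e^{-s\phi(a(o,\gamma o))}$ converges for $s>\delta_\phi(\Gamma')$.

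Finally I would translate this back into a statement about $\delta_\phi(\Gamma)$. Since $\phi$ is positive on $C(\Gamma)$ and linear, there is $c_0>0$ with $\phi(v)\geq c_0\|v\|$ for $v\in C(\Gamma)$; using that Cartan projections of $\Gamma$ asymptotically lie near $C(\Gamma)$ (or arguing more directly via Lemma \ref{lemma-triangle inequality in higher rank}), we get $\|a(o,\gamma o)\|\leq \frac{1}{c_0}\phi(a(o,\gamma o)) + O(1)$, hence $\delta_\phi(\Gamma')c\|a(o,\gamma o)\| \leq \frac{\delta_\phi(\Gamma')c}{c_0}\phi(a(o,\gamma o))+O(1)$. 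Plugging this in, convergence of the series for all $s>\delta_\phi(\Gamma')$ forces $\delta_\phi(\Gamma) \leq \delta_\phi(\Gamma') + \frac{\delta_\phi(\Gamma')c}{c_0}$ — which is not quite the clean factor $2$. To get exactly the factor $\tfrac12$, the right move (following Roblin) is to avoid the lossy Lemma \ref{lemma-thirion 2} bound and instead use the shadow principle on the \emph{normalizer orbit} more carefully: one writes $\|\mu_{\gamma o}\|\geq \frac1C e^{-\delta_\phi(\Gamma')\phi(a(o,\gamma o))}$ directly, which does hold because $\gamma\in N(\Gamma)$ and the shadow principle of Theorem \ref{th-lem-shadow_principle_normalizer} gives $\mu_o(S(o,\gamma o,r))\leq C\|\mu_{\gamma o}\|e^{-\delta_\phi(\Gamma')\phi(a(o,\gamma o))}$ while the lower Shadow Lemma bound on the $\Gamma'$-side combined with $\Gamma$-quasi-invariance gives the reverse; then the series $\sum_{\gamma\in\Gamma}\|\mu_{\gamma o}\|e^{-s\phi(a(o,\gamma o))}\geq \frac1C\sum_{\gamma\in\Gamma}e^{-(s+\delta_\phi(\Gamma'))\phi(a(o,\gamma o))}$ diverges for $s+\delta_\phi(\Gamma')<\delta_\phi(\Gamma)$, i.e. whenever $s<\delta_\phi(\Gamma)-\delta_\phi(\Gamma')$. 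Comparing with the convergence for $s>\delta_\phi(\Gamma')$ from Proposition \ref{pr-Roblin combinatorial} yields $\delta_\phi(\Gamma)-\delta_\phi(\Gamma')\leq\delta_\phi(\Gamma')$, i.e. $\delta_\phi(\Gamma)\leq 2\delta_\phi(\Gamma')$, as desired.

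\textbf{Main obstacle.} The delicate point is the lower bound $\|\mu_{\gamma o}\|\geq \frac1C e^{-\delta_\phi(\Gamma')\phi(a(o,\gamma o))}$ for $\gamma\in\Gamma$ (not just $\Gamma'$): this is exactly where the shadow principle for the normalizer (Theorem \ref{th-lem-shadow_principle_normalizer}) is indispensable — without it one only controls $\|\mu_{\gamma' o}\|$ for $\gamma'\in\Gamma'$ and cannot run the divergence side of the argument against the full group $\Gamma$. Once that estimate is in hand, the rest is the standard two-sided Poincaré-series comparison.
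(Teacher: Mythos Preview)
Your final argument is the same as the paper's: take $\mu\in M_\phi(\delta_\phi(\Gamma'),\Gamma')$, use the shadow principle on $\Gamma\cdot o$ together with Proposition~\ref{pr-Roblin combinatorial} to get convergence of $\sum_{\gamma\in\Gamma}\|\mu_{\gamma o}\|e^{-s\phi(a(o,\gamma o))}$ for $s>\delta_\phi(\Gamma')$, then insert the lower bound $\|\mu_{\gamma o}\|\geq C^{-1}e^{-\delta_\phi(\Gamma')\phi(a(o,\gamma o))}$ to conclude $\delta_\phi(\Gamma)\leq 2\delta_\phi(\Gamma')$. The detour through Lemma~\ref{lemma-thirion 2} is correctly discarded.

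However, your justification of the key lower bound on $\|\mu_{\gamma o}\|$ is not right. You try to obtain it by combining the upper bound $\mu_o(S(o,\gamma o,r))\leq C\|\mu_{\gamma o}\|e^{-\delta_\phi(\Gamma')\phi(a(o,\gamma o))}$ with a lower bound on $\mu_o(S(o,\gamma o,r))$ coming from ``the lower Shadow Lemma bound on the $\Gamma'$-side combined with $\Gamma$-quasi-invariance''. But there is no $\Gamma$-quasi-invariance available: $\mu$ is only $\Gamma'$-equivariant, and the Shadow Lemma for $\Gamma'$ says nothing about shadows at points $\gamma o$ with $\gamma\in\Gamma\setminus\Gamma'$. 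The clean (and only) way is to use the shadow principle itself with the roles of the two points swapped: since both $o$ and $\gamma o$ lie in $N(\Gamma')\cdot o$, Theorem~\ref{th-lem-shadow_principle_normalizer} applied with $x=\gamma o$, $y=o$ gives
\[
\|\mu_{\gamma o}\|\;\geq\;\mu_{\gamma o}\bigl(S(\gamma o,o,r)\bigr)\;\geq\;\frac{1}{C}\,\|\mu_o\|\,e^{-\delta_\phi(\Gamma')\phi(a(\gamma o,o))}.
\]
This is exactly inequality~(\ref{eq:lower-bound-mass}) extended from $\Gamma'$ to $\Gamma$ via the shadow principle, and is how the paper proceeds. (The appearance of $a(\gamma o,o)$ rather than $a(o,\gamma o)$ is harmless: summing over $\gamma$ or $\gamma^{-1}$ yields the same Poincar\'e series.) Once this is fixed, your proof coincides with the paper's.
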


\begin{proof}
Since $\G'$ is Zariski dense, {by Theorem \ref{th:quint-patterson-sullivan}} there exists $\mu\in M_\phi(\delta_\phi(\Gamma'), \G')$. By Theorem \ref{th-lem-shadow_principle_normalizer}, we know that $\G\cdot o $ satisfies the shadow principle for $M_\phi(\delta_\phi(\Gamma'), \G')$. Therefore, for all $s>\delta_\phi(\G')$ Proposition \ref{pr-Roblin combinatorial} implies
$$  \sum_{\g \in \G} \| \mu_{\g o } \| e^{-s \varphi (a( o, \g o )) } <  +\infty.$$

Moreover by (\ref{eq:lower-bound-mass}), there exists $C>0$ such that for all $\gamma \in \G$, we have 
$\displaystyle  \|\mu_{\gamma o}\|\geq \frac{||\mu_o||}{C} e^{-\delta_\phi(\Gamma') \phi(a(o, \gamma o))}$.
We get hence that for all $s>\delta_\varphi(\G')$,
\begin{eqnarray*}
\sum_{\g \in \G} e^{- (s+\delta_\varphi(\G') )  \varphi (a( o, \g o ))}  \leq \frac C {||\mu_o||}\sum_{\g \in \G} \| \mu_{\g o } \| e^{-s \varphi (a( o, \g o )) } <  +\infty.
\end{eqnarray*}
Therefore $2\delta_\phi(\Gamma') \geq \delta_\phi(\Gamma)$.

\end{proof}

\section{Amenability and critical exponent}\label{sec -amenability and critical exponent}

\subsection{Amenable discrete groups}

Given a discrete set $Y$, we write $\ell^\infty(Y)$ for the set of \emph{bounded maps} from $Y$ to $\mathbb R$.

\begin{definition}
A discrete group $\mathcal G$ is \emph{amenable} if there exists a positive linear map $\Theta : \ell^\infty (\cG) \to \mathbb R$, such that $\Theta({\bf 1}_{\mathcal G}) = 1$ and which is right invariant, i.e. for all $g\in \mathcal G$ and all $f \in \ell^\infty(\cG)$, we have
$$
\Theta(a \mapsto f(ag)) = \Theta(f).
$$

Such map $\Theta$ is called a \emph{right-invariant mean} on $\mathcal G$.

\medskip
If $\Gamma$ is a discrete group, a normal subgroup $\Gamma'\triangleleft\Gamma$ is \emph{co-amenable} in $\Gamma$ if the quotient group $\Gamma'\backslash \Gamma$ is amenable.
\end{definition}

We refer for instance the reader \cite{Jus15} for various characterizations and examples of amenable discrete groups. Let us present an important characterization of amenability, which illustrates how this property is related to the growth of the group.

\medskip

Let $\mathcal G$ be a discrete group. For all \emph{finite} set $S\subset \mathcal G$, and all subset $A\subset \mathcal G$, we define the $S$\emph{-boundary} of $A$ to be
$$
\partial_S A = \{g\in \mathcal G\backslash A \; ; \; \exists (g',s)\in A\times S, g = s g'\}.
$$
The $S$-\emph{isoperimetric constant} of $\mathcal G$ is then given by
$$
\mathfrak h_S(\mathcal G) = \inf_{A\subset G \mbox{ finite}} \frac {\# \partial_S A}{\# A}.
$$
The following characterization of amenability has been shown by F\"olner in \cite{Fol55}.

\begin{theorem}[F\"olner \cite{Fol55}]
A discrete group $\mathcal G$ is amenable if and only if for all finite $S\subset \mathcal G$, the isoperimetric constant $\mathfrak h_S(\mathcal G)$ is $0$.
\end{theorem}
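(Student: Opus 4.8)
The plan is to establish the classical equivalence between invariant means and F\"olner sets by proving the two implications separately. Throughout I would freely use that, for a discrete group, a right-invariant mean exists if and only if a left-invariant one does (conjugate by $f\mapsto(g\mapsto f(g^{-1}))$), and that a finite set $A$ has small left $S$-boundary precisely when $A^{-1}$ has small right $S^{-1}$-boundary; this lets me phrase each step on whichever side is convenient and then transfer back.

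For the direction ``F\"olner $\Rightarrow$ amenable'' I would fix an exhaustion $S_1\subset S_2\subset\cdots$ of $\mathcal G$ by finite symmetric sets and, using $\mathfrak h_{S_n}(\mathcal G)=0$, choose finite $A_n$ with $\#\partial_{S_n}A_n<\tfrac1n\#A_n$; this forces $\#(sA_n\triangle A_n)<\tfrac cn\#A_n$ for every $s\in S_n$ and an absolute constant $c$, so that (after replacing $A_n$ by $A_n^{-1}$) the normalized counting measures $\Theta_n(f)=\frac1{\#A_n}\sum_{a\in A_n}f(a)$ form an asymptotically invariant sequence of means. Since the set of means is weak-$*$ compact by Banach--Alaoglu, I would take a weak-$*$ cluster point $\Theta$: it is again a mean, and right-invariance follows from $|\Theta_n(a\mapsto f(ag))-\Theta_n(f)|\le\|f\|_\infty\,\#(A_ng\triangle A_n)/\#A_n\to0$. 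This direction is routine.

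For ``amenable $\Rightarrow$ F\"olner'', fix a finite $S$ and $\epsilon>0$. The first step is Day's trick: the finitely supported probability measures $\operatorname{Prob}(\mathcal G)$ are weak-$*$ dense in the set of means (any mean is dominated by the supremum functional, so a Hahn--Banach separation cannot exclude it), hence $0$ lies in the weak-$*$ closure of the convex set $\{(\mu-s\cdot\mu)_{s\in S}:\mu\in\operatorname{Prob}(\mathcal G)\}\subset\bigoplus_{s\in S}\ell^1(\mathcal G)$, and since the weak and norm closures of a convex set agree there is $\mu\in\operatorname{Prob}(\mathcal G)$ with $\sum_{s\in S}\|\mu-s\cdot\mu\|_1<\epsilon$. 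The second step is the layer-cake argument: writing $A_t=\{g:\mu(g)>t\}$ one has $\mu(g)=\int_0^\infty\mathbf{1}_{A_t}(g)\,dt$, hence $\int_0^\infty\#A_t\,dt=1$, and because for each fixed $g$ the function $t\mapsto\mathbf{1}_{A_t}(g)-\mathbf{1}_{sA_t}(g)$ does not change sign, one gets $\|\mu-s\cdot\mu\|_1=\int_0^\infty\#(A_t\triangle sA_t)\,dt$. Averaging $\sum_{s\in S}\#(A_t\triangle sA_t)$ against $\int_0^\infty\#A_t\,dt=1$ then produces some $t_0$ with $\#A_{t_0}>0$ and $\sum_{s\in S}\#(A_{t_0}\triangle sA_{t_0})<\epsilon\,\#A_{t_0}$; since $\partial_SA_{t_0}\subseteq\bigcup_{s\in S}(sA_{t_0}\setminus A_{t_0})$, this yields $\#\partial_SA_{t_0}/\#A_{t_0}<\epsilon$, and letting $\epsilon\to0$ gives $\mathfrak h_S(\mathcal G)=0$.

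I expect the main obstacle to be the ``amenable $\Rightarrow$ F\"olner'' direction: extracting a genuine almost-invariant $\ell^1$-function from the purely finitely-additive invariant mean requires the Hahn--Banach/convexity passage (weak closure equals norm closure), and converting that function into an almost-invariant finite set hinges on the sign-constancy observation inside the layer-cake identity. The converse direction, resting on weak-$*$ compactness of the space of means and a diagonal choice of F\"olner sets, is comparatively standard.
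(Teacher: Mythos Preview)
The paper does not supply its own proof of this theorem: it is simply stated and attributed to F\"olner \cite{Fol55}, serving only as motivation for why amenability relates to growth. So there is no in-paper argument to compare against.

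That said, your outline is the standard modern proof and is essentially correct. A few small points worth tightening: (i) your exhaustion $S_1\subset S_2\subset\cdots$ presupposes $\mathcal G$ is countable, which is fine in the paper's context (discrete subgroups of Lie groups) but would need a net argument in full generality; (ii) in Day's step you should be explicit that the relevant closure passage is in $\ell^1$ with its \emph{weak} topology (dual $\ell^\infty$), not the weak-$*$ topology---the point is that once you have approximated the invariant mean by finitely supported $\mu_\alpha$ in the weak-$*$ topology of $(\ell^\infty)^*$, the differences $\mu_\alpha-s\cdot\mu_\alpha$ lie in $\ell^1$ and tend to $0$ weakly there, so Mazur's theorem applies; (iii) your computation $\#(sA\triangle A)\le c\,\#\partial_S A$ with $c=2$ uses $\#(sA\setminus A)=\#(A\setminus sA)$ since $|sA|=|A|$, which you could state. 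None of these is a genuine gap.
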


This easily implies that all finitely generated groups with sub-exponential growth (e.g. infra-nilpotent discrete groups) are amenable, whereas any free group with at least 2 generators is non-amenable. 

\subsection{Averaging conformal density}\label{sec-averaging conformal density}

Let us fix a discrete Zariski dense subgroup $\Gamma$ of $G$, and a Zariski dense normal subgroup $\Gamma' \triangleleft \Gamma$. By the result of last section, we know that the limit cones of $\G$ and $\G'$ coincide. Let $\phi\in C(\G)_+^*$ and $\delta>0$ be fixed. We write 
$$
\|\phi\|  = \sup_{v\in \fa \backslash \{0\}} \frac{|\phi(v)|}{\|v\|} <+\infty.
$$

We assume from now on that $\Gamma'$ is co-amenable in $\Gamma$. Let $\Theta$ be a right-invariant mean on $\Gamma'\backslash \Gamma$. Let $\mu\in M_\phi(\delta, \Gamma')$ with $||\mu_o||= 1$ be fixed. Recall that for all $u\in \Gamma$, we write $\mu^u$ for the conformal density defined for all $x\in X$ by
$$
\mu^u_x = \frac{1}{\|\mu_{uo}\|}(u^{-1})_* \mu_{ux}.
$$
By Lemma \ref{lem-conjugate are density}, we still have $\mu^u\in M_\phi(\delta, \Gamma')$ and $\| \mu_o^u\| = 1$. Therefore for all $x\in X$ and all continuous maps $f : \mathcal F\to \R$, the map
$$
\mathcal L_{x,f} : \left\{\begin{array}{ccc}
						\Gamma & \to & \R\\
						u & \mapsto & \int_{\mathcal F}f(\xi) d\mu_x^u(\xi)
					\end{array}\right.
$$
is $\G'$-invariant. By Lemma \ref{lemma-thirion 1}, there exists $C>0$ such that $\mathcal L_{x,f}$ is bounded from above by $C e^{\delta \|\phi\| d(o,x)} \sup{|f|}$. Therefore, $\mathcal L_{x,f}$ induces a bounded map on $\G'\backslash \G$ and we can define the following application from $\mathcal C(\mathcal F, \R)$ to $\mathbb R$ by
\begin{equation}\label{eq:mean-density}
\mu_x^\Theta : f \mapsto \Theta\left(u \mapsto \mathcal L_{x,f}(u) = \frac{1}{||\mu_{uo}||}\int_{\mathcal F} f(u^{-1}\xi)d\mu_{ux}(\xi)\right).
\end{equation}

We get the following.

\begin{lemma}\label{lem:average-density}
The family $\mu^\Theta = (\mu^\Theta_x)_{x\in X}$ induces a $\Gamma'$-invariant $\phi$-conformal density of dimension $\delta$, still written $\mu^\Theta$, which we call the \emph{$\Theta$-average} of $\mu$. Moreover, it satisfies for all $\gamma\in \Gamma$,
\begin{equation}\label{eq:big-mass}
||\mu^\Theta_{\gamma o}|| \cdot ||\mu^\Theta_{\gamma^{-1}o}||\geq 1.
\end{equation}

\end{lemma}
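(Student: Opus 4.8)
The plan is to check directly that the family $\mu^\Theta=(\mu^\Theta_x)_{x\in X}$ defined by \eqref{eq:mean-density} is a $\phi$-conformal density of dimension $\delta$, that it is $\Gamma'$-invariant, and finally to establish the mass bound \eqref{eq:big-mass}. For the first point, one must first verify that each $\mu^\Theta_x$ is genuinely a (locally finite, positive, Borel) measure on $\mathcal F$. This follows from the Riesz representation theorem: $f\mapsto \mu^\Theta_x(f)$ is a positive linear functional on $\mathcal C(\mathcal F,\R)$ — positivity and linearity are inherited from $\Theta$ and from the fact that each $\mathcal L_{x,f}$ depends linearly and monotonically on $f$ — and it is bounded (by the estimate $\mathcal L_{x,f}\le Ce^{\delta\|\phi\|d(o,x)}\sup|f|$ noted just before the statement, using Lemma \ref{lemma-thirion 2} to control $\phi(\beta_\xi(x,o))$ by $\|\phi\|\,\|a(x,o)\|=\|\phi\|\,d(o,x)$), so by compactness of $\mathcal F$ it is given by integration against a finite Borel measure.

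For the conformality relation, fix $x,y\in X$. For each fixed $u\in\Gamma$ the density $\mu^u$ is $\phi$-conformal of dimension $\delta$ by Lemma \ref{lem-conjugate are density}, so for every $\xi\in\mathcal F$ we have $d\mu^u_x/d\mu^u_y(\xi)=e^{-\delta\phi(\beta_\xi(x,y))}$. Hence for any $f\in\mathcal C(\mathcal F,\R)$,
\[
\mathcal L_{x,f}(u)=\int_{\mathcal F}f(\xi)\,d\mu^u_x(\xi)=\int_{\mathcal F}f(\xi)e^{-\delta\phi(\beta_\xi(x,y))}\,d\mu^u_y(\xi)=\mathcal L_{y,\tilde f}(u),
\]
where $\tilde f(\xi)=f(\xi)e^{-\delta\phi(\beta_\xi(x,y))}$ is again continuous on $\mathcal F$. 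Applying $\Theta$ to both sides gives $\mu^\Theta_x(f)=\mu^\Theta_y(\tilde f)$, i.e. $d\mu^\Theta_x/d\mu^\Theta_y(\xi)=e^{-\delta\phi(\beta_\xi(x,y))}$, which is the conformality of dimension $\delta$. For $\Gamma'$-invariance, fix $\gamma'\in\Gamma'$ and $x\in X$; using that $\mu^u$ is $\Gamma'$-invariant (Lemma \ref{lem-conjugate are density}) one computes $\mathcal L_{\gamma' x,f}(u)=\int f\,d\mu^u_{\gamma' x}=\int f\,d(\gamma')_*\mu^u_x=\int f(\gamma'\xi)\,d\mu^u_x(\xi)=\mathcal L_{x,f\circ\gamma'}(u)$, and applying $\Theta$ yields $\mu^\Theta_{\gamma' x}(f)=\mu^\Theta_x(f\circ\gamma')$, i.e. $(\gamma')_*\mu^\Theta_x=\mu^\Theta_{\gamma' x}$. (Here the fact that replacing $\mu$ by $\mu^u$ permutes inside the same class, together with the $\Gamma'$-invariance of the map $u\mapsto\mathcal L_{x,f}$ on which $\Theta$ is evaluated, is what makes the bookkeeping go through; one should be mildly careful that conjugation $u\mapsto \gamma' u$ does not change the value of $\Theta$ — but $\Theta$ is by construction a function on $\Gamma'\backslash\Gamma$, so this is automatic.)

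The last and most substantial point is \eqref{eq:big-mass}. Fix $\gamma\in\Gamma$; we want $\|\mu^\Theta_{\gamma o}\|\cdot\|\mu^\Theta_{\gamma^{-1}o}\|\ge 1$. Taking $f\equiv 1$ in \eqref{eq:mean-density}, $\|\mu^\Theta_x\|=\mu^\Theta_x(\mathbf 1)=\Theta\bigl(u\mapsto \|\mu^u_x\|\bigr)=\Theta\bigl(u\mapsto \|\mu_{ux}\|/\|\mu_{uo}\|\bigr)$. Evaluating at $x=\gamma o$ and $x=\gamma^{-1}o$ and using right-invariance of $\Theta$ to shift the variable $u\mapsto u\gamma$ in one of the two means, one wants to combine $\Theta\bigl(u\mapsto \|\mu_{u\gamma o}\|/\|\mu_{uo}\|\bigr)$ with $\Theta\bigl(u\mapsto \|\mu_{u\gamma^{-1}o}\|/\|\mu_{uo}\|\bigr)$. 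The key elementary input is the pointwise inequality $\|\mu_{a\gamma o}\|\cdot\|\mu_{a\gamma^{-1}o}\|\ge \|\mu_{ao}\|^2$ for every $a\in\Gamma$ — which is exactly the statement that, after normalizing $\mu^a$ to have unit mass at $o$, the mass at $\gamma o$ times the mass at $(\gamma)^{-1}(\gamma o)=o$-type point is $\ge 1$, itself a consequence of the shadow principle lower bound (Theorem \ref{th-lem-shadow_principle_normalizer}) applied to the orbit $\Gamma\cdot o=N(\Gamma)\cdot o\cap\dots$; more simply it follows from $\|\mu_{y}\|\cdot\|\mu_{z}\|\ge c\,e^{-\delta\phi(a(o,y))}e^{-\delta\phi(a(o,z))}$ combined with the triangle-type inequality, but the clean way is: normalize so $\|\mu^a_o\|=1$, then by \eqref{eq:lower-bound-mass} applied to $\mu^a$ (legitimate since $\mu^a\in M_\phi(\delta,\Gamma')$ and $\gamma$ normalizes $\Gamma'$, so $\mu^a$ is even $\Gamma$-quasi-invariant enough) one gets $\|\mu^a_{\gamma o}\|\ge \frac1C e^{-\delta\phi(a(o,\gamma o))}$ and likewise at $\gamma^{-1}o$; multiplying and using $\phi(a(o,\gamma o))=\phi(a(\gamma^{-1}o,o))$ together with a Busemann cocycle identity forces the product to be $\ge 1$). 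I expect the main obstacle to be precisely this: nailing down the correct pointwise sub/super-multiplicativity of the masses $\|\mu_{\cdot}\|$ and making sure the shift of variable in $\Theta$ (right-invariance) is applied on the right side, so that Jensen/positivity of the mean $\Theta$ upgrades the pointwise inequality to $\|\mu^\Theta_{\gamma o}\|\cdot\|\mu^\Theta_{\gamma^{-1}o}\|\ge 1$. Once the pointwise bound $\|\mu^u_{\gamma o}\|\cdot\|\mu^u_{\gamma^{-1}o}\|\ge 1$ (for all $u$, after the unit-mass normalization) is in hand, positivity and normalization $\Theta(\mathbf 1)=1$ of the mean, plus right-invariance to realign the two $\Theta$-averages over the same variable, give the claim — concretely, $\|\mu^\Theta_{\gamma o}\|\ge \Theta\bigl(u\mapsto 1/\|\mu^u_{\gamma^{-1}o}\|\bigr)\ge 1/\Theta\bigl(u\mapsto\|\mu^u_{\gamma^{-1}o}\|\bigr)=1/\|\mu^\Theta_{\gamma^{-1}o}\|$ by the arithmetic–harmonic mean inequality for the positive mean $\Theta$.
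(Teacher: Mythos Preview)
Your treatment of the first part --- that each $\mu^\Theta_x$ is a finite Borel measure, that the family is $\phi$-conformal of dimension $\delta$, and that it is $\Gamma'$-equivariant --- is correct and matches the paper's argument (which dispatches these points in one line by appeal to Lemma~\ref{lem-conjugate are density}).

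The gap is in your proof of \eqref{eq:big-mass}. You correctly identify that right-invariance of $\Theta$ and a Jensen-type inequality are the two ingredients, but you combine them in the wrong order and thereby create an unnecessary obstacle. The ``key elementary input'' you seek, namely the pointwise bound
\[
\|\mu_{a\gamma o}\|\cdot\|\mu_{a\gamma^{-1}o}\|\ \ge\ \|\mu_{ao}\|^2\qquad\text{for every }a\in\Gamma,
\]
is \emph{false} in general, and your shadow-principle argument does not establish it: the lower bounds from \eqref{eq:lower-bound-mass} only give $\|\mu^a_{\gamma o}\|\cdot\|\mu^a_{\gamma^{-1}o}\|\ge C^{-2}e^{-\delta[\phi(a(o,\gamma o))+\phi(a(o,\gamma^{-1}o))]}$, and the exponent is strictly negative, not zero. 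No Busemann cocycle identity repairs this, since these are Cartan projections, not Busemann functions.

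The fix is that the pointwise inequality is not needed at all. Apply right-invariance \emph{first}: under the substitution $u\mapsto u\gamma^{-1}$ one has the \emph{equality}
\[
\|\mu^u_{\gamma o}\|=\frac{\|\mu_{u\gamma o}\|}{\|\mu_{uo}\|}\ \longmapsto\ \frac{\|\mu_{uo}\|}{\|\mu_{u\gamma^{-1}o}\|}=\frac{1}{\|\mu^u_{\gamma^{-1}o}\|},
\]
so that
\[
\|\mu^\Theta_{\gamma o}\|=\Theta\!\left(u\mapsto \|\mu^u_{\gamma o}\|\right)=\Theta\!\left(u\mapsto \frac{1}{\|\mu^u_{\gamma^{-1}o}\|}\right)
\]
as an identity. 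Only then does one apply Jensen for $t\mapsto t^{-1}$ to conclude
\[
\|\mu^\Theta_{\gamma o}\|\ \ge\ \left(\Theta\!\left(u\mapsto \|\mu^u_{\gamma^{-1}o}\|\right)\right)^{-1}=\|\mu^\Theta_{\gamma^{-1}o}\|^{-1}.
\]
This is exactly the paper's three-line proof. Your final sentence essentially writes down this chain, but with an inequality in the first step where there is in fact an equality; recognizing that equality removes the obstacle you flagged.
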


\begin{proof}
It follows from (\ref{eq:mean-density}) that for all $x\in X$, $\mu^\Theta_x$ is a positive linear map from $\mathcal C(\mathcal F, \R)$ to $\mathbb R$, with $\nu_x({\bf 1}_{\mathcal F}) \leq e^{\delta d(o,x)}$. Therefore it induces a unique finite Borel measure on $\mathcal F$, which we still denote by $\mu^\Theta_x$. Moreover, it immediately follows from Lemma \ref{lem-conjugate are density} that the family $\mu^\Theta = (\mu_x^\Theta)_{x\in X}$ belongs to $M_\phi(\delta, \Gamma)$.

\medskip
Eventually, let us fix $\gamma\in \Gamma$. Since $\Theta$ is invariant under right multiplication by elements of $\Gamma$, we have :
\begin{eqnarray*}
\|\mu^\Theta_{\gamma o } \| &=& \Theta \left(  u \mapsto \frac{\| \mu_{u \gamma o }\| }{\| \mu_{u o }\|} \right)\\
					&=& \Theta \left(  u \mapsto \frac{\| \mu_{u o }\| }{\| \mu_{u \gamma^{-1} o }\|} \right)\\
					&\geq & \left( \Theta\left(  u \mapsto \frac{\| \mu_{u \gamma^{-1} o }\| }{\| \mu_{u  o }\|} \right) \right)^{-1} = \| \mu^\Theta_{\gamma o} \|^{-1}
\end{eqnarray*}
where the last inequality follows from the Jensen inequality applied to $t \mapsto t^{-1}$. 
\end{proof}

\subsection{Proof of Theorems \ref{th-main critical exponent}}

Let us fix a discrete Zariski dense subgroup $\Gamma$ of $G$, and a Zariski dense normal subgroup $\Gamma' \triangleleft \Gamma$. Assume moreover that $\Gamma'$ is co-amenable in $\Gamma$, and let $\Theta$ be a right-invariant mean on $\Gamma'\backslash \Gamma$. Let $\phi\in C(\Gamma')_+^*$ be fixed. 

\medskip

We have seen in Section \ref{sec-PS measures} that there exists $\mu \in M_\phi(\delta_\phi(\G'), \G')$. Let $\mu^\Theta$ be the $\Theta$-average of $\mu$ given by Lemma \ref{lem:average-density}. We define
$$
\Gamma_+ = \left \{ \gamma\in \Gamma \; ; \; ||\mu^\Theta_{\gamma o}||\geq 1\right \}.
$$
It follows from (\ref{eq:big-mass}) that for all $\gamma\in \G$, we have $\gamma\in \Gamma_+$ or $\gamma^{-1}\in \Gamma_+$. Therefore, for all $s\in \mathbb R$,
$$
\sum_{\gamma\in \Gamma} e^{-s \phi(a(o, \gamma o))} \leq 2 \sum_{\gamma\in \Gamma_+} e^{-s \phi(a(o, \gamma o))} \leq \sum_{\gamma \in \Gamma_+} ||\mu^\Theta_{\gamma o}||e^{-s \phi(a(o, \gamma o))}.
$$

Moreover, by Proposition \ref{pr-Roblin combinatorial}, the critical exponent of $\displaystyle s\mapsto \sum_{\gamma \in \Gamma_+} ||\mu^\Theta_{\gamma o}||e^{-s \phi(a(o, \gamma o))}$ is at most $\delta_\phi(\Gamma')$. Therefore we have
$$
\delta_\phi(\Gamma) \leq \delta_\phi(\Gamma'),
$$
which concludes the proof of Theorem \ref{th-main critical exponent}.

\bibliographystyle{alpha}

\end{document}